\numberwithin{equation}{section}
\newcommand{\sph}{\text{sph}}
\newcommand{\cns}{c_{\text{NS}}} %Notation for the Nazarov-Sodin constant
\newcommand{\E}{\mathbb E}
\newcommand{\proj}{\zeta}
\newcommand{\bS}{\mathbb S}
\newcommand{\bA}{\mathbb A}
\newcommand{\R}{\mathbb{R}}
\newcommand{\N}{\mathbb{N}}
\newcommand{\C}{\mathbb{C}}
\newcommand{\Z}{\mathbb{Z}}
\renewcommand{\P}{\mathbb{P}}
\newcommand{\Q}{\mathbb{Q}}
\renewcommand{\Im}{\operatorname{Im}}  %Imaginärteil
\renewcommand{\Re}{\operatorname{Re}}
\newcommand{\cP}{\mathcal{P}}
\newcommand{\cF}{\mathcal{F}}
\newcommand{\cG}{\mathcal{G}}
\newcommand{\Var}{\mathop{\mathrm{Var}}\nolimits}
\newcommand{\eps}{\varepsilon}
\newcommand{\eqdistr}{\stackrel{d}{=}}
\newcommand{\todistr}{\overset{d}{\underset{n\to\infty}\longrightarrow}}
\newcommand{\toweak}{\overset{w}{\underset{n\to\infty}\longrightarrow}}
\newcommand{\toas}{\overset{a.s.}{\underset{n\to\infty}\longrightarrow}}
\newcommand{\toasR}{\overset{a.s.}{\underset{R\to\infty}\longrightarrow}}
\newcommand{\toR}{\overset{}{\underset{R\to\infty}\longrightarrow}}
\newcommand{\ton}{\overset{}{\underset{n\to\infty}\longrightarrow}}
\newcommand{\tok}{\overset{}{\underset{k\to\infty}\longrightarrow}}
\newcommand{\ind}{\mathbbm{1}}
\newcommand{\dd}{{\rm d}}
\newcommand{\eee}{{\rm e}}
\renewcommand{\Re}{\operatorname{Re}}  %Realteil
\renewcommand{\Im}{\operatorname{Im}}  %Imaginärteil
\theoremstyle{plain}
\newtheorem{theorem}{Theorem}[section]
\newtheorem{proposition}[theorem]{Proposition}
\newtheorem{lemma}[theorem]{Lemma}
\newtheorem{corollary}[theorem]{Corollary}
\theoremstyle{definition}
\newtheorem{remark}[theorem]{Remark}
\newcommand{\nod}{\mathcal{N}}
\newcommand{\area}{\operatorname{Area}}
\begin{document}

\author{Zakhar Kabluchko}
\address{Zakhar Kabluchko: Institut f\"ur Mathematische Stochastik,
Westf\"alische Wilhelms-Universit\"at M\"unster,
Orl\'eans-Ring 10,
48149 M\"unster, Germany}
\email{zakhar.kabluchko@uni-muenster.de}

\author{Igor Wigman}
\address{Igor Wigman: Department of Mathematics, King’s College London, UK}
\email{igor.wigman@kcl.ac.uk}

\title[]{Asymptotics for the expected number of nodal components for random lemniscates}
\keywords{}
\subjclass[2010]{Primary, 30C15, 14P05; secondary, 14P25, 60G60, 60G15}
\thanks{}
\begin{abstract}
We determine the true asymptotic behaviour for the expected number of connected components for
a model of random lemniscates proposed recently by Lerario and Lundberg. These are defined as the subsets of the Riemann sphere, where the absolute value of certain random, $\text{SO}(3)$-invariant rational function of degree $n$ equals to $1$. We show that the expected number of the connected components of these lemniscates, divided by $n$, converges to a positive constant defined in terms of the quotient of two independent plane Gaussian analytic functions. A major obstacle in applying the novel non-local techniques due to Nazarov and Sodin on this problem is the underlying non-Gaussianity, intristic to the studied model.

\end{abstract}

\maketitle

\section{Introduction}\label{sec:introduction}

\subsection{Random lemniscates}

Let $\xi_0,\xi_1,\ldots$ and $\eta_0,\eta_1,\ldots$ be independent random variables having the standard complex Gaussian distribution. That is, their real and imaginary parts are independent centred real Gaussian random variables with variance $1/2$. Consider the so-called {\em spherical random polynomials} defined by
\begin{equation}\label{eq:def_p_n}
p_n(z) = \sum_{k=0}^n \xi_k \sqrt{\binom nk} z^k,
\quad
q_n(z) = \sum_{k=0}^n \eta_k \sqrt{\binom nk} z^k,
\quad
z\in\C.
\end{equation}
In the following, we shall frequently identify the extended complex plane $\C\cup\{\infty\}$ with the centred unit sphere $\bS^2$ in $\R^3$ by means of the stereographic projection $\proj: \bS^2 \to \C\cup\{\infty\}$ given by
\begin{equation}
\label{eq:zeta ster proj def}
\proj (u,v,w) = \left(\frac{u}{1-w},\frac{v}{1-w}\right),
\quad
u^2+v^2+w^2 =1, \; (u,v,w)\neq (0,0,1),
\end{equation}
and $\proj(0,0,1) = \infty$.
Lerario and Lundberg~\cite{lerario_lundberg} introduced the random rational function $\Psi_n:\bS^2 \to \C\cup\{\infty\}$ given by
\begin{equation}
\label{eq:Psin def}
\Psi_n(x) := \frac{p_n(\proj(x))}{q_n(\proj(x))}, \quad x\in\bS^2,
\end{equation}
and studied what they called ``{\em random lemniscate}''
\begin{equation}
\label{eq:Gamman lemn def}
\Gamma_n:= \{x\in\bS^2\colon |\Psi_n(x)| = 1\}.
\end{equation}
Both the probability law of the random rational function $\Psi_n$ and the nodal set $\Gamma_n$ are invariant ~\cite{lerario_lundberg} with respect to the natural action of $\text{SO}(3)$ on $\bS^2$. Using the Kac-Rice formula, Lerario and Lundberg~\cite{lerario_lundberg}  showed that the expected spherical length of $\Gamma_n$ equals $(\pi^2/2)\sqrt n$. The length and the number of components of random lemniscates associated to another family of random polynomials, the Kac polynomials, were studied by~\citet{lundberg_ramachandran}.

\subsection{Background and statement of the main result}

The key object of this manuscript is the number of connected components of $\Gamma_n$, denoted by $\nod(\Gamma_n)$,
more precisely, the asymptotic law of its expectation $\E[\nod(\Gamma_n)]$. Nazarov and Sodin ~\cite{NaSoAJM} have introduced a powerful machinery,
allowing for the {\em precise} asymptotic analysis of the expected number of connected components of the zero set of a particular Gaussian ensemble of spherical random field, and  further developed it ~\cite{sodin_lec_notes,nazarov_sodin} and abstracted their methods to treat a general class of Gaussian random fields and Gaussian ensembles of functions possessing some natural notion of {\em scaling}. These ideas have been employed by Sarnak and Wigman ~\cite{sarnak_wigman16}
to measure more refined quantities, like the number of connected components belonging to a given topological class, and their mutual positions
(``nesting"). Further, various upper and lower bounds were established for the important Kostlan ensemble of random polynomials defined
on the real (and complex) projective space (``statistical version of Hilbert's $16$th problem"), and their generalisations; see, e.g.~\cite{gayet2012betti,gayet2014lower,lerario2014statistics}.

The Gaussian assumption is {\em essential} to all of the above works, and, to our best knowledge, none of these is applicable to a non-Gaussian
situation, like ours \eqref{eq:Psin def}, i.e.\ estimating the expected number of connected components $\nod(\Gamma_n)$ of
the random lemniscates \eqref{eq:Gamman lemn def}.
For this model, Lerario and Lundberg~\cite{lerario_lundberg} proved the bounds $c_1 n \leq \E \left[ \nod(\Gamma_n)\right]\leq n$
for some constant $0<c_1 < 1$. Our main result determines the true asymptotic behaviour of the expected number of
connected components of $\Gamma_{n}$.

\begin{theorem}\label{theo:main}
There exists a positive constant $0<\cns\leq 1/\pi$, so that we have
\begin{equation}
\label{eq:c0 lim const sphere}
\lim_{n\to\infty} \frac{\E\left[ \nod(\Gamma_n)\right]}{n} = \pi \cns.
\end{equation}
\end{theorem}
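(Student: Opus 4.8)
The plan is to run a non-Gaussian adaptation of the Nazarov--Sodin non-local method~\cite{nazarov_sodin}, the point of entry being that, although $\Psi_n$ and $\log|\Psi_n|$ are genuinely non-Gaussian, the underlying \emph{local limit} is a stationary (if non-Gaussian) random field on the plane. Fix $x_0\in\bS^2$; by $\mathrm{SO}(3)$-invariance of the law of $\Psi_n$ we may assume $\proj(x_0)=0$. Substituting $z=\zeta/\sqrt n$ in~\eqref{eq:def_p_n} and using $\binom nk/n^k\to 1/k!$, one sees that $p_n(\cdot/\sqrt n)$ and $q_n(\cdot/\sqrt n)$ converge in distribution, in $C^\infty_{\mathrm{loc}}(\C)$, to two \emph{independent} planar Gaussian analytic functions $F,G$ with covariance $\E[F(z)\overline{F(w)}]=\eee^{z\bar w}$; hence the rescaled lemniscate converges to $\{|F/G|=1\}$, the zero set of $h:=\log|F|-\log|G|$. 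The structural observation driving everything is that $h$ is translation invariant: from the stationarization identity $\eee^{-\bar a z-|a|^2/2}F(z+a)\eqdistr F$ one gets $\log|F(z+a)|=\Re(\bar a z)+\tfrac12|a|^2+\widetilde F_a(z)$ with $\widetilde F_a\eqdistr \log|F|$, and the deterministic drift $\Re(\bar a z)+\tfrac12|a|^2$ is the \emph{same} for $G$, so it cancels in $h$. Thus $h$ is a stationary random field on $\C$; moreover it inherits mixing, hence ergodicity, from the rapid decay of correlations of the planar GAF.

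Next I would establish the local law for the limit field. Writing $N(h,D_R)$ for the number of connected components of $\{h=0\}$ inside the disk $D_R$, a Bulinskaya-type argument shows that a.s.\ $\nabla h\ne 0$ on $\{h=0\}$, so this zero set is a.s.\ a smooth $1$-manifold; the Nazarov--Sodin integral-geometric sandwich together with the ergodic theorem then produces a constant $\nu\in[0,\infty]$ with $N(h,D_R)/(\pi R^2)\to\nu$ almost surely and in $L^1$. Finiteness $\nu<\infty$ follows because every bounded nodal domain of $\log|F/G|$ contains a zero of $F$ or of $G$ (maximum/minimum modulus principle) and the zero sets of $F,G$ have finite intensity; positivity $\nu>0$ follows by exhibiting an \emph{open}, hence positive-probability, configuration of $(F,G)$ that forces a lemniscate loop inside a fixed disk --- e.g.\ $F$ having a single small zero inside $D_{1/4}$ while $|G|$ stays bounded away from $0$ on $D_{1/2}$ and $|F|>|G|$ on an intermediate circle --- and then propagating one loop into a positive spatial density via stationarity and ergodicity. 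This identifies $\cns$ as a fixed positive multiple of $\nu$, i.e.\ as a constant attached to the quotient of two independent planar Gaussian analytic functions.

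The globalization step is the usual Nazarov--Sodin sandwich transported to the sphere. Tile $\bS^2$ by $\asymp n$ geodesic caps of radius $\asymp r/\sqrt n$; on each cap the $\sqrt n$-rescaled field converges, after a Skorokhod coupling, in $C^1$ to $h$ on a Euclidean disk $D_r$, and for a.e.\ $r$, using transversality of $\{h=0\}$ to $\partial D_r$, the number of components of $\Gamma_n$ lying entirely inside a given cap converges to $N(h,D_r)$. Uniform integrability of the single-cap count --- needed to pass the $C^1$-convergence to expectations --- is supplied by the deterministic bound ``number of lemniscate components in a region $\lesssim$ number of zeros of $p_nq_n$ in a slightly larger region'', the latter having $O(r^2)$ expectation with good concentration. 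By linearity of expectation and $\mathrm{SO}(3)$-symmetry the sum over caps of the inside-cap component counts has expectation $(\#\text{caps})\cdot\E[N(h,D_r)]=\pi\cns\,n\,(1+o_r(1))$ for a suitable constant $\cns>0$ proportional to $\nu$, after first letting $n\to\infty$ and then $r\to\infty$; components straddling cap boundaries are handled by the standard grid-averaging / semi-local estimate and contribute only a lower-order term, controlled via the Lerario--Lundberg bounds (the order-$\sqrt n$ expected length of $\Gamma_n$ and the inequality $\nod(\Gamma_n)\le n$). This yields $\E[\nod(\Gamma_n)]/n\to \pi\cns$ with $\cns>0$; the a priori bound $\nod(\Gamma_n)\le n$~\cite{lerario_lundberg} then forces $\pi\cns\le 1$, i.e.\ $\cns\le 1/\pi$.

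The main obstacle is the non-Gaussianity, which intervenes at three points. First, the limit field $h=\log|F|-\log|G|$ is not Gaussian and is singular --- it blows up at the discrete zero sets of $F$ and $G$ --- so the standard Gaussian toolkit of the Nazarov--Sodin method (Gaussian concentration, Kac--Rice for Gaussian fields, stability of nodal topology under $C^1$-small perturbations) is not directly available and must be re-derived for $h$; this should be feasible because $h$ is a \emph{tame} functional of the Gaussian pair $(F,G)$, and on the lemniscate one has $|F|=|G|$ bounded away from $0$ and $\infty$ with high probability, so one may work with $|F/G|$ rather than with $h$. Second, one must certify the mixing/ergodicity of the non-Gaussian stationary field $h$, which reduces to decorrelation estimates for the planar GAF. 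Third --- and in practice the most delicate --- the semi-local estimate controlling boundary-straddling components needs an a priori bound that does not come from Gaussian machinery; here the quantitative results of Lerario--Lundberg play that role. I expect the first and second points --- transferring the Gaussian stability and concentration estimates to $h$ and establishing ergodicity of $h$ --- to be the technically heaviest part of the argument.
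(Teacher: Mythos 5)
Your outline coincides with the paper's architecture at every structural joint: the functional limit theorem sending the rescaled $(p_n,q_n)$ to a pair of independent planar GAFs and hence the rescaled lemniscate to $\{|G_1/G_2|=1\}$ (Proposition~\ref{prop:functional_CLT}); the stationarity and mixing of the limit field via the stationarization identity $G(z+a)\,\eee^{-\bar a z - |a|^2/2}\eqdistr G$ and the decorrelation of the GAF (Lemma~\ref{lem:GAF_transform}, Proposition~\ref{prop:mixing}); the Nazarov--Sodin limit $\nod(\Gamma;R)/(\pi R^2)\to\cns$ via a superadditive ergodic theorem (Theorem~\ref{theo:cns_existence}); the Bulinskaya-type nonsingularity and transversality lemmas (Lemmas~\ref{lem:gamma_nonsing} and~\ref{lem:non_tangent}); the integral-geometric sandwich on the sphere (Lemma~\ref{lem:sandwich}); control of boundary-straddling components via the Lerario--Lundberg length asymptotics (Lemma~\ref{lem:exp_length_local}); and positivity of $\cns$ via an open-set / Taylor-approximation argument with $\cns\le 1/\pi$ coming from $\nod(\Gamma_n)\le n$. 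The one place where you genuinely diverge is the uniform-integrability step needed in Lemma~\ref{lem:expect_local_conv}: you propose to bound $\nod(\Gamma_n;x_0,R/\sqrt n)$ by the number of zeros of $p_nq_n$ in the cap (each component encloses a nodal domain that, by the maximum modulus principle applied to $\Psi_n$ and $1/\Psi_n$, contains a zero or pole) and then invoke ``good concentration'' of the zero count. The paper uses exactly this maximum-principle bound to verify condition~(c) of the superadditive ergodic theorem, but for the uniform integrability it instead decomposes into $\delta/n$-large components (bounded deterministically by an area argument) and $\delta/n$-small ones, and then invokes the Nazarov--Sodin/Sarnak--Wigman semi-local estimate~\eqref{eq:small dom ineq det} together with careful positive- and negative-moment bounds for the non-Gaussian field $H_n=|F_n|^2-|\widetilde F_n|^2$ and its gradient (Propositions~\ref{prop:sm do small} and~\ref{prop:sm do small_finite_deg}, Section~\ref{sec:small_components}); this is the technically heaviest part of the paper and is precisely what requires wrestling with the non-Gaussianity. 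Your route, if the ``good concentration'' is made precise as a uniform-in-$n$ second-moment bound on zero counts of the spherical GAF in shrinking caps (which is true, via the Kac--Rice two-point intensity and its scaling limit to the GEF), would bypass the small-components machinery entirely; it exploits the special algebraic structure of lemniscates (components controlled by zeros of $p_n q_n$), whereas the paper's estimate is closer in spirit to the general Nazarov--Sodin toolkit. Two minor remarks: the paper works with the smooth field $H=|F|^2-|\widetilde F|^2$ rather than the singular $h=\log|F|-\log|G|$, which spares one the care you acknowledge having to take near the zeros of $F,G$; and the ``good concentration'' assertion is asserted rather than proved in your sketch, so that step would need to be filled in before the argument is complete.
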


The number $\cns$ is the Nazarov-Sodin type constant related to
nodal components of certain random meromorphic function to be defined below.

\subsection{Outline of the paper}

Here we introduce the principal notation and state preparatory results towards the proof of Theorem~\ref{theo:main}.

\subsubsection{The stationary meromorphic function}

The {\em plane Gaussian entire function} (GEF) is defined as
\begin{equation}
\label{eq:G GAF def}
G(z):= \sum_{n=0}^\infty \xi_n \frac{z^n}{\sqrt{n!}},
\quad
z\in \C,
\end{equation}
where $\xi_0,\xi_1,\ldots$ are i.i.d.\ standard complex Gaussian random variables. This means that $\Re \xi_k$ and $\Im \xi_k$ are independent, centred real Gaussian with variance $1/2$. The complex, mean zero Gaussian random field $(G(z))_{z\in\C}$ is uniquely defined by its covariance function
\begin{equation}
\label{eq:rG covar GEF def}
r_{G}(z,w):=\E [G(z) \cdot \overline{G(w)}] = \eee^{z\bar w},
\quad
\E [G(z)\cdot G(w)] = 0,
\quad
z,w\in\C.
\end{equation}

We are interested in the random meromorphic function $\Psi: \C\to \C \cup \{\infty\}$, defined as follows.
Let $(G_1(z))_{z\in\C}$ and $(G_2(z))_{z\in\C}$ be two independent plane Gaussian analytic functions \eqref{eq:G GAF def}. Then, we define
%Consider the random meromorphic function $G_1(z)/G_2(z)$ taking values in the extended complex plane $\C \cup \{\infty\}$ which we identify with the unit sphere $\bS^2$ by means of the stereographic projection $\proj: \bS^2 \to \C \cup\{\infty\}$.
\begin{equation}
\label{eq:Psi def}
\Psi(z) := \frac {G_1(z)}{G_2(z)}, \quad z\in \C.
%\proj^{-1} \left(\frac {G_1(z)}{G_2(z)}\right), \quad z\in \C.
\end{equation}
In Section~\ref{sec:Psi_properties} we shall  prove that $\Psi$ is stationary, isotropic and mixing (see Lemma~\ref{lem:GAF_transform} and Proposition~\ref{prop:mixing}); we shall also establish that it is the scaling limit of $\Psi_{n}$ in
\eqref{eq:Psin def},
after appropriate scaling (see Proposition \ref{prop:functional_CLT}). The stationarity of $|G(z)|^2/\E\left[ |G(z)|^2\right]$ and some related properties will be established in Lemma~\ref{lem:cond dist a+bi}.

\subsubsection{Notation for discs and caps}
The open and the closed discs in $\C$ will be denoted by
$$
D(x_0;R):=\{z\in \C: |z- x_{0}|<R\}
\quad \text{ and } \quad
\bar D(x_0;R):=\{z\in \C: |z-x_0|\leq R\},
$$
where $x_0\in\C$ is the center of the disc and $R>0$ is its radius.
The geodesic distance on the unit sphere $\bS^2$ will be denoted by $\rho$, and the open and closed spherical caps of radius $r>0$ centred at $x_0\in \bS^2$ will be denoted by
\begin{align*}
B(x_0;r) := \{y\in\bS^2 \colon \rho(x_0,y)<r\},
\quad \text{ and } \quad
\bar B(x_0;r) := \{y\in\bS^2 \colon \rho(x_0,y)\leq r\},
\end{align*}
respectively.

\subsubsection{Nazarov-Sodin type constant \texorpdfstring{$\cns$}{cNS}}
%\subsection{Outline of the proof of Theorem \ref{theo:main}}
%\begin{verbatim}
%To add some passages & explanations to this subsection
%\end{verbatim}
Nazarov and Sodin~\cite{nazarov_sodin} (see also~\cite{sodin_lec_notes}) studied the connected components of random sets of the form $\{x\in\R^d\colon \xi(x) = 0\}$, where $(\xi(x))_{x\in\R^d}$ is a stationary real Gaussian process.
Among other results, they proved in~\cite[Theorem~1]{sodin_lec_notes} a law of large numbers for the number of connected components of these nodal sets.
We are going to state a similar result for the connected components of the  random set
$$
\Gamma = \{z\in\C\colon |\Psi(z)|=1\}.
$$
%Let $\nod(\Gamma; A)$ be the number of connected components of the set $\Gamma$ completely contained in a domain $A\subset \C$.
We write $\nod(\Gamma;R)$ for the number of connected components of  $\Gamma$ completely contained in the open centred disc $D(0;R)=\{z\in \C: |z|<R\}$.
\begin{theorem}
\label{theo:cns_existence}
There exists a constant $\cns>0$ such that
\begin{equation}
\label{eq:cns existence}
\frac {\nod(\Gamma;R)}{\pi R^2}  \toasR \cns
\quad \text{ and } \quad
\lim_{R\to\infty} \frac {\E\left[\nod(\Gamma;R)\right]}{\pi R^2} = \cns.
\end{equation}
\end{theorem}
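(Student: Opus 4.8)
The plan is to follow the Nazarov--Sodin paradigm for stationary random nodal sets, as abstracted in \cite{nazarov_sodin, sodin_lec_notes}, adapting it to the non-Gaussian field $\Psi = G_1/G_2$. The key structural facts we shall exploit, all established elsewhere in the paper, are: $\Psi$ is stationary and isotropic (Lemma~\ref{lem:GAF_transform}); $\Psi$ is mixing (Proposition~\ref{prop:mixing}); and the set $\Gamma = \{|\Psi|=1\}$ is, almost surely, a smooth $1$-dimensional submanifold of $\C$ with finitely many components in any bounded region (the latter following because $\log|\Psi| = \log|G_1| - \log|G_2|$ is a.s.\ a nonconstant real-analytic function away from the discrete zero sets of $G_1,G_2$, so that $1$ is a.s.\ a regular value by a Bulinskaya-type argument). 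Granted these, define the random counting measure on $\C$ whose value on a Borel set $U$ is the number of components of $\Gamma$ contained in $U$; write $\nod(\Gamma; U)$ for this quantity. Stationarity of $\Psi$ transfers to $\nod(\Gamma;\,\cdot\,)$ in the sense that $\nod(\Gamma; U + x) \eqdistr \nod(\Gamma; U)$ for every $x \in \C$.

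The first main step is the \emph{mean} statement: I would show that $\E[\nod(\Gamma; D(0;R))]/(\pi R^2)$ converges to a finite limit $\cns \ge 0$. The standard route is a sub/super-additivity argument. Because a component can be contained in a union $U_1 \cup U_2$ of disjoint sets without being contained in either piece, one introduces the auxiliary counts $\nod^*(\Gamma; U)$ = number of components of $\Gamma$ that \emph{intersect} $U$; then $U \mapsto \E[\nod(\Gamma;U)]$ is superadditive and $U \mapsto \E[\nod^*(\Gamma;U)]$ is subadditive over disjoint unions, and by stationarity both are translation-invariant and (after checking a local uniform bound on the expected number of components meeting a unit square --- itself a Kac--Rice / integral-geometric estimate, using that $\Gamma$ has finite expected length per unit area, exactly as in the length computation of Lerario--Lundberg) comparable up to boundary effects. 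A Fekete-type lemma applied to the monotone set functions $Q \mapsto \E[\nod(\Gamma;Q)]$ and $Q \mapsto \E[\nod^*(\Gamma;Q)]$ along a sequence of dilating squares gives a common density $\cns = \lim_{R} \E[\nod(\Gamma;Q_R)]/|Q_R|$, and passing from squares to discs is routine since the boundary contributions are $O(R)$. This yields the second assertion in \eqref{eq:cns existence}.

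The second main step is to upgrade the convergence in mean to the almost-sure convergence $\nod(\Gamma;R)/(\pi R^2) \toasR \cns$. Here one uses ergodicity: mixing of $\Psi$ (Proposition~\ref{prop:mixing}) implies that the translation action of $\C$ on the underlying probability space (the law of the pair $(G_1,G_2)$, or equivalently of $\Psi$) is ergodic, hence the spatial ergodic theorem (Wiener's multiparameter version, or Nevo's) applies to the stationary field of increments of the counting measure. Concretely, write $\nod(\Gamma; D(0;R))$ as an integral over $D(0;R)$ of a local density built from the measure that assigns to each component its ``mass'' spread over the region it occupies --- the device of \cite[§3]{nazarov_sodin}: attach to each component $\gamma$ the uniform probability measure on a chosen reference point (e.g.\ the leftmost point of $\gamma$), giving a stationary point process whose intensity is $\cns$; the number of components strictly inside $D(0;R)$ differs from the number of reference points in $D(0;R)$ by at most the number of components meeting an $O(1)$-neighbourhood of $\partial D(0;R)$, which is $o(R^2)$ a.s.\ by the mean bound and Borel--Cantelli along $R \in \N$ plus monotonicity. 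The ergodic theorem then delivers the a.s.\ limit, and the limit is the deterministic constant $\cns$ from Step~1 by dominated convergence (the normalized counts are dominated in $L^1$ by the uniform local bound). Finally $\cns > 0$: one exhibits, with positive probability, a deterministic configuration of $G_1, G_2$ on a fixed disc forcing $\Gamma$ to contain a component there (for instance, arrange $|G_1| > |G_2|$ on a small circle and $|G_1| < |G_2|$ at its center, which is an open condition of positive probability in the sup-norm on a compact set, hence has positive probability by the support theorem for Gaussian analytic functions), and then uses stationarity plus the superadditivity of $\E[\nod(\Gamma;\,\cdot\,)]$ to conclude $\cns \ge c > 0$.

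The main obstacle is the non-Gaussianity: the abstract Nazarov--Sodin theorems are stated for Gaussian fields, and their proofs invoke Gaussian tools (concentration, the explicit structure of the limiting field, Wiener chaos) at two points --- the local uniform integrability of the component count, and the ergodicity/mixing input. I expect the bulk of the work to be verifying these two ingredients directly for $\Psi = G_1/G_2$: the integrability bound should follow from a Kac--Rice estimate for the real-analytic function $\log|\Psi|$ on $\C$, controlling $\E[\#\{\Gamma \cap \text{unit square}\}]$ via the (finite) expected length density together with a bound on the expected number of critical points of $|\Psi|$ on the square (to rule out many tiny components), while the ergodicity is furnished by Proposition~\ref{prop:mixing} once one checks that mixing of the \emph{field} $\Psi$ implies mixing (hence ergodicity) of the measure-preserving $\C$-action on its canonical space --- a standard but non-trivial translation that requires $\Gamma$ to be a measurable functional of $\Psi$ restricted to bounded regions, which holds by the implicit function theorem off an event of probability zero.
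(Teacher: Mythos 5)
Your proposal is correct in outline and lands in the same Nazarov--Sodin territory, but the technical route you take is genuinely different from the paper's. The paper applies the \emph{multiparameter superadditive ergodic theorem} (Krengel, Theorem~2.14) directly to the process $X_A$ = number of components of $\Gamma$ lying entirely inside $A^*$ for finite $A\subset\Z^2$, which delivers the a.s.\ \emph{and} $L^1$ convergence simultaneously in a single stroke; a sandwich $V_k\subset W_k\subset V_{k+20}$ of inflating discs then transfers the limit from lattice unions of squares to round discs. You instead split the argument into two steps: a Fekete-type subadditivity argument for the mean, and then an upgrade to a.s.\ convergence via a stationary reference-point process and Wiener's pointwise ergodic theorem. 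Both are valid implementations, though your second step requires a little more care than you acknowledge (the boundary term ``components crossing $\partial D(0;R)$'' is only $O(R)$ in expectation, so Borel--Cantelli along integers does not immediately give $o(R^2)$ a.s.; one needs to pass along a sparser subsequence, say $R_k=2^k$, and interpolate by monotonicity of $\nod^*$). The other notable divergence is the local integrability bound needed to make either ergodic theorem apply: you propose a Kac--Rice estimate on the length plus control of critical points of $|\Psi|$, whereas the paper has a slicker argument --- by the maximum modulus principle (applied to $\Psi$ and $1/\Psi$), every component of $\Gamma$ encloses a nodal domain that must contain a zero or a pole of $\Psi$, so the expected component count in a region is at most the expected count of zeros and poles there, which is $(2/\pi)\times$area by the standard GAF computation. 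This bypasses any Kac--Rice machinery entirely. Your positivity argument via the support theorem for GAFs is essentially identical to the paper's alternative sketch (the paper also cites Lerario--Lundberg for an independent positivity proof). One small correction: you frame Proposition~\ref{prop:mixing} as a statement about ``the field $\Psi$'' that needs ``translation'' into mixing of the $\C$-action on a canonical space, but the proposition is already formulated exactly as the mixing of the flow $(T_t)$ on $(\bA,\mathscr B(\bA),\Q)$, so no additional translation step is required.
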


As a by-product of the proof of Theorem \ref{theo:main}, it will also follow that the constant $\cns$ in \eqref{eq:cns existence} is the same as in \eqref{eq:c0 lim const sphere}.

\subsubsection{Estimates for small components}
Nodal domains are connected components of the complement $\C\backslash \Gamma$ of the lemniscate $\Gamma = \{z\in\C: |\Psi(z)|=1\}$.
For some $\delta>0$, a connected component of $\Gamma$ is called $\delta$-{\em small}, if it is adjacent to a nodal domain of area $<\delta$. Otherwise, it is called $\delta$-large.
Let $\nod_{\delta}(\Gamma;R)$ be the total number of $\delta$-large connected components of $\Gamma$ lying entirely in the radius $R$ centred disc $D(0;R)$. Also, let $$\nod_{\delta-sm}(\Gamma;R) :=\nod(\Gamma;R)-  \nod_{\delta}(\Gamma;R)$$ be the number of $\delta$-small components lying entirely in the same disc $D(0;R)$.  In the following proposition we give an upper bound on the expected number of small components.

\begin{proposition}
\label{prop:sm do small}
There exist constants  $C>0$ and $c_0>0$ such that for all $R>1$, $\delta>0$,
\begin{equation}
\label{eq:exp(delta-sm)<=Cdelta^c0R^2 inf}
\E\left[\nod_{\delta-sm}(\Gamma;R)\right] \le C\delta^{c_{0}}\cdot R^{2}.
\end{equation}
\end{proposition}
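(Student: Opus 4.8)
The plan is to show that the presence of a small nodal domain forces a quantitative anomaly in the local behaviour of $\Psi$ — either $|\Psi|$ is very close to $1$ on a large portion of a disc, or one of the Gaussian analytic functions $G_1, G_2$ has an unusually large number of zeros / an unusually small value in the region — and then to estimate the probability of such an anomaly using standard Gaussian concentration and small-ball estimates, summing over a lattice of $\sim R^2$ discs. The key point is that a $\delta$-small component $\gamma$ bounds a nodal domain $U$ with $\area(U) < \delta$. On $\partial U = \gamma$ we have $|\Psi| = 1$, i.e. $\log|\Psi| = \log|G_1| - \log|G_2|$ vanishes. Inside $U$, the function $\log|\Psi|$ is harmonic away from the zeros of $G_1$ and $G_2$, hence it either is of one sign throughout $U$ (if $U$ contains no zero of $G_1 G_2$) or contains such a zero. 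If $U$ contains no zero, then $\log|\Psi|$ is harmonic on $U$, vanishes on $\partial U$, hence is identically zero, which is impossible for a nonconstant $\Psi$; so every small domain contains at least one zero of $G_1$ or of $G_2$.

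First I would make this dichotomy quantitative. Cover $D(0;R)$ by $O(R^2)$ unit discs $D_j = D(x_j; 1)$. It suffices to bound the expected number of $\delta$-small components whose bounded nodal domain is contained in a fixed $D_j$, and then sum (using stationarity, each term is the same). Fix one such disc $D = D(0;1)$. The event "there is a $\delta$-small component with nodal domain inside $D$" will be shown to be contained in the union of two events: (i) a "degeneracy" event $\mathcal{A}_\delta$ on which the gradient structure of $\log|\Psi|$ degenerates near a zero of $G_1$ or $G_2$ inside $D$, forcing the nodal line to pinch off a domain of area $<\delta$; and (ii) a "global largeness/smallness" event, of probability decaying faster than any power of $\delta$, which can be absorbed into the bound. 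For (i), the heuristic is that a small nodal domain around a zero $a$ of, say, $G_1$ appears only if the "next-order" data of $\Psi$ near $a$ — namely the value $G_1'(a)$ and the values $G_2(a), G_2'(a)$ — conspire so that the level line $|\Psi| = 1$ encircles $a$ at distance $O(\sqrt\delta)$. Writing $\Psi(z) \approx c\,(z-a)$ near a simple zero $a$ of $G_1$, the set $\{|\Psi| = 1\}$ near $a$ is roughly the circle of radius $1/|c|$, so a domain of area $<\delta$ requires $|c| = |G_1'(a)/G_2(a)| \gtrsim \delta^{-1/2}$; equivalently $|G_2(a)| \lesssim \delta^{1/2}|G_1'(a)|$. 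This is a small-ball event for the Gaussian vector $(G_2(a), G_1'(a))$ conditioned on $G_1(a) = 0$, and by the explicit nondegenerate Gaussian law (the covariance kernel $e^{z\bar w}$ makes all relevant joint densities bounded) it has probability $\lesssim \delta^{c_0}$ for a suitable $c_0 \in (0,1)$; the same analysis applies to zeros of $G_2$ by the symmetry $\Psi \leftrightarrow 1/\Psi$. One must also handle higher-order zeros and the possibility of several zeros in $D$, but the expected number of zeros of $G_1 G_2$ in $D$ is $O(1)$ and zeros of multiplicity $\geq 2$ occur with probability zero, so these contribute negligibly.

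I expect the main obstacle to be making the local pinch-off analysis rigorous and uniform: controlling the nodal set of $|\Psi| = 1$ in a whole neighbourhood of a zero $a$, not merely to leading order, and showing that the event "some nodal domain adjacent to a component near $a$ has area $<\delta$" really is contained, up to a super-polynomially small event, in the explicit small-ball event on the jet of $(G_1, G_2)$ at $a$. The cleanest route is probably a deterministic lemma: if $f$ is holomorphic on $D(0;2)$ with $f(0)=0$, $|f'(0)| \leq \delta^{1/2}$, and $|f|$ is bounded above and below on an annulus $\{1 \leq |z| \leq 2\}$ by absolute constants, then $\{|f| = 1\}\cap D(0;1)$ bounds no domain of area $<\delta$ unless $|f'(0)|$ is small — stated contrapositively, a small adjacent domain forces $|f'(0)|$ small — combined with the Gaussian fact that on a unit disc $G_1, G_2$ satisfy such two-sided bounds off a super-polynomially small event. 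Once this deterministic input is in place, the probabilistic estimate is routine: condition on the (at most $O(1)$) zeros in $D$, apply the small-ball bound at each, take expectations, and multiply by the number $O(R^2)$ of discs, yielding \eqref{eq:exp(delta-sm)<=Cdelta^c0R^2 inf}.
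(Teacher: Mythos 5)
Your approach is genuinely different from the paper's, but it has a real gap that you yourself flag as ``the main obstacle'': the deterministic pinch-off lemma relating $\delta$-small nodal domains to the jet of $(G_1,G_2)$ at a zero is not established, and proving it is far from routine. Moreover, the version you wrote down is internally inconsistent with your own heuristic: if $\Psi(z)\approx c(z-a)$ near a simple zero $a$ of $G_1$ and the adjacent domain has area $<\delta$, then $|c|=|G_1'(a)/G_2(a)|\gtrsim\delta^{-1/2}$ must be \emph{large}, whereas your lemma's hypothesis reads ``$|f'(0)|\le\delta^{1/2}$'' and its conclusion reads ``unless $|f'(0)|$ is small'' --- neither matches the intended implication that a small adjacent domain forces $|f'(0)|$ large. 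Beyond this slip, the real difficulty is controlling the full nodal topology of $\{|\Psi|=1\}$ in a whole neighbourhood of $a$, not just to first order: competing higher Taylor coefficients can pinch off a small domain without the leading coefficient blowing up, and showing that the two-sided annulus bound excludes this (up to a super-polynomial tail, uniformly in the position of the zero and in the number of nearby zeros) is essentially the hard deterministic content of the Nazarov--Sodin small-domain machinery.

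The paper avoids all of this local geometry by quoting from Nazarov--Sodin and Sarnak--Wigman a ready-made \emph{deterministic} inequality bounding $\nod_{\delta\text{-}sm}(H;R)$ by $C_0\delta^{c_0}$ times the product of two integrals: a positive derivative-moment $\int_{D(0;R)}|\partial^2 H|^q\,dz$ and a reciprocal moment $\int_{D(0;R)}|H|^{-(1-\epsilon)}\|\nabla H\|^{-(2-\epsilon)}\,dz$. After taking expectations, applying H\"older, and invoking the stationarity of $H(z)=|F(z)|^2-|\widetilde F(z)|^2$ established in Lemma~\ref{lem:cond dist a+bi}, the problem reduces to two pointwise moment bounds at the origin. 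The positive moment is immediate. The genuinely new work --- the one place where the non-Gaussianity bites --- is the negative moment: unlike the Gaussian stationary case, $H(0)$ and $\nabla H(0)$ are \emph{not} independent, and the paper handles this by conditioning on $F(0)=a+ib$, $\widetilde F(0)=\widetilde a+i\widetilde b$, identifying the conditional law of $\nabla H(0)$ as two independent centred Gaussians of variance $2(a^2+b^2+\widetilde a^2+\widetilde b^2)$, and then proving convergence of the resulting $6$-dimensional integral via a bespoke polar change of variables. Your small-ball route, if completed, might even yield a larger exponent $c_0$, but as written it is not a proof; finishing it would amount to redoing the pinch-off analysis that the cited deterministic inequality already packages.
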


We shall also need a similar estimate for the small components of finite-degree lemniscates $\Gamma_n = \{x\in\bS^2: |\Psi_n(x)|=1\}$. For $\delta>0$, a connected component of $\Gamma_n$ is called $\delta/n$-{\em small}, if it is adjacent to a domain of (spherical) area $<\delta/n$. Otherwise, it is called $\delta/n$-large. Let $\nod_{\delta/n}(\Gamma_n)$ (respectively, $\nod_{\delta/n-sm}(\Gamma_n)$), be the total number of $\delta/n$-large (respectively, $\delta/n$-small) connected components of $\Gamma_n$. Also, for $r>0$,
analogously to the above, let $\nod(\Gamma_{n}; x_0, r)$, $\nod_{\delta/n}(\Gamma_{n}; x_0, r)$ and $\nod_{\delta/n-sm}(\Gamma_{n}; x_0, r)$
be the total number, the number of $\delta/n$-large, and the number of $\delta/n$-small, components of $\Gamma_{n}$
lying entirely in the spherical cap $B(x_{0};r)$ respectively.

\begin{proposition}
\label{prop:sm do small_finite_deg}
%Let $\nod_{\delta/n-sm}(\Gamma_n)= \nod(\Gamma_n) - \nod_{\delta/n}(\Gamma_n)$ be the number of $\delta$-small components of $\Gamma_n= \{z\in \bS^2\colon \Psi_n(z)=1\}$. Then,
There
exist constants $C_1>0$ and $c_0>0$ such that
$$
%\E\left[\nod_{\delta/n-sm}(\Gamma_{n})\right] \le C_{1}\delta^{c_{0}} \cdot n
%\quad
%\text{ and }
%\quad
\E\left[\nod_{\delta/n-sm}(\Gamma_{n}; x_0, R/\sqrt{n})\right] \le C_{1}\delta^{c_{0}} \cdot R^2
$$
uniformly for all $n\in\N$, $x_0\in \bS^2$, $1<R<\sqrt{n}$ and $\delta>0$.
\end{proposition}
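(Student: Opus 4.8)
The plan is to deduce the finite-degree estimate from its scaling-limit counterpart, Proposition~\ref{prop:sm do small}, by exploiting the functional central limit theorem (Proposition~\ref{prop:functional_CLT}) together with the $\text{SO}(3)$-invariance of $\Psi_n$. First, by isotropy it suffices to prove the bound for a single fixed $x_0$, say the "south pole" $\proj^{-1}(0)$, since the distribution of $\nod_{\delta/n-sm}(\Gamma_n; x_0, R/\sqrt n)$ does not depend on $x_0$. Next I would pass to the chart: writing $z = \proj(x)$ and rescaling $z = w/\sqrt n$, the random meromorphic function $x \mapsto \Psi_n(x)$ near $x_0$ becomes, in the variable $w \in D(0;R)$, a sequence of random meromorphic functions $\widetilde\Psi_n(w) := \Psi_n(\proj^{-1}(w/\sqrt n))$ that converges (locally uniformly, in distribution, in an appropriate sense—e.g. on the level of the pair $(G_1,G_2)$ of GAFs representing numerator and denominator) to $\Psi = G_1/G_2$ as in Proposition~\ref{prop:functional_CLT}. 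Under this rescaling the spherical area element near $x_0$ is comparable to the Euclidean one up to a factor $(1+o(1))$ uniformly on $D(0;R)$ for $R < \sqrt n$ (indeed the conformal factor of the stereographic projection at $z = w/\sqrt n$ is $4/(1+|z|^2)^2 = 4(1+o(1))$), so a nodal domain of spherical area $<\delta/n$ corresponds, after rescaling by $\sqrt n$, to a planar nodal domain of Euclidean area $< 4\delta(1+o(1))$, and hence (enlarging the constant) $\delta$-small in the planar sense, up to constants.

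The heart of the matter is therefore to make the deduction "Proposition~\ref{prop:sm do small} $\Rightarrow$ Proposition~\ref{prop:sm do small_finite_deg}" quantitative and uniform in $n$. One clean route is to avoid the CLT altogether and instead re-run the \emph{proof} of Proposition~\ref{prop:sm do small} at finite $n$: the argument there (whatever it is — presumably a combination of a Kac-Rice/integral-geometric bound on the number of small components via level-crossing counts, or a "doubling"/comparison argument controlling how many components can be squeezed into a region where $|\Psi_n|$ stays near $1$) should be robust enough to apply verbatim to $\widetilde\Psi_n$ on $D(0;R)$, provided one has \emph{uniform-in-$n$} control of the relevant local quantities: the one- and two-point densities of the rescaled field $\widetilde\Psi_n$ and its derivatives, together with non-degeneracy of the associated Gaussian (numerator/denominator) covariance structure. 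These are exactly the estimates underlying the functional CLT, so they are available uniformly on compact sets $D(0;R)$ with $R$ allowed to grow slowly; the constraint $R < \sqrt n$ is precisely what keeps us inside a single coordinate chart and keeps the conformal factor bounded. Summing such local bounds over an $O(n/R^2)$-cover of $\bS^2$ would, in fact, also recover the global statement, but for the proposition as stated we only need the single-cap bound.

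I expect the main obstacle to be the \emph{uniformity in $n$} of the small-components estimate, rather than any single asymptotic statement. Two sub-issues stand out. First, the field is non-Gaussian — $\widetilde\Psi_n$ is a ratio — so one must carry the argument on the level of the two independent (approximately Gaussian) ingredients $p_n(w/\sqrt n)/(\text{normalization})$ and $q_n(w/\sqrt n)/(\text{normalization})$ and control the event that the denominator is atypically small on a macroscopic portion of $D(0;R)$; this requires quantitative anti-concentration for Gaussian analytic functions that is uniform in $n$, together with a tail bound (e.g. via Markov applied to $\E[\nod_{\delta/n-sm}]$ itself, which is what we are bounding) to absorb the exceptional configurations into the $C_1\delta^{c_0}R^2$ term. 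Second, one must make sure the error terms coming from (i) the difference between $\widetilde\Psi_n$ and $\Psi$ and (ii) the difference between spherical and Euclidean area are both genuinely uniform over $1 < R < \sqrt n$ and enter multiplicatively, so that they can be swept into the constants $C_1$ and $c_0$ without destroying the power-law dependence on $\delta$. Handling the regime where $R$ is close to $\sqrt n$ — the boundary of validity of the chart — is where the conformal factor estimate $4/(1+|w|^2/n)^2$ must be tracked most carefully; but since $|w| < R < \sqrt n$ gives $|w|^2/n < 1$, the factor stays in $[1,4]$ and no logarithmic loss occurs.
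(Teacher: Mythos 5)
Your first route (deducing the finite-$n$ bound from Proposition~\ref{prop:sm do small} via the functional CLT) cannot work as stated, and you rightly move away from it: convergence in distribution gives nothing uniform in $n$, and upgrading it to convergence of expectations would require a uniform integrability statement that is essentially the proposition itself (indeed Proposition~\ref{prop:sm do small_finite_deg} is what the paper uses to supply uniform integrability in the proof of Lemma~\ref{lem:expect_local_conv}), so the deduction would be circular. Your second route --- re-running the proof of Proposition~\ref{prop:sm do small} at finite $n$, exploiting rotational invariance and the chart near the south pole with the bounded conformal factor --- is in fact what the paper does.

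The gap is that your sketch treats the proof of Proposition~\ref{prop:sm do small} as a black box (``presumably a Kac--Rice/integral-geometric bound \ldots or a doubling/comparison argument''), and the guesses are off the mark. The mechanism in the paper is the \emph{deterministic} Nazarov--Sodin/Sarnak--Wigman inequality \eqref{eq:small dom ineq det}: pathwise, $\nod_{\delta\text{-sm}}$ is bounded by $C_0\delta^{c_0}$ times a product of $L^q$-integrals of $|\partial^2 H_n|$ and of $|H_n|^{-(1-\epsilon)}\|\nabla H_n\|^{-(2-\epsilon)}$ over the cap. One then takes expectations, applies H\"older, and uses the rotation invariance of $H_n$ to reduce both integrands to constants. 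The remaining work --- which your outline does not supply --- is the pair of moment bounds \eqref{eq:pos mom 2nd der bnd scal} and \eqref{eq:neg mom 1st mix der bnd scal} with the correct powers of $n$. The positive moment is routine, but the negative joint moment $\E[|H_n|^{-(1-\epsilon)}\|\nabla H_n\|^{-(2-\epsilon)}]$ is the genuinely non-Gaussian obstacle: $H_n$ and $\nabla H_n$ are \emph{not} independent, so this does not factor. The paper resolves it by conditioning on $F_n(S)$ and $\widetilde F_n(S)$, showing the conditional law of $\nabla H_n(S)$ is centred Gaussian with variance proportional to $n\bigl(|F_n(S)|^2 + |\widetilde F_n(S)|^2\bigr)$ (a concrete chart computation near the pole), and then bounding an explicit convergent integral over $\R^2\times\R^2$ in polar-like coordinates. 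This conditional-variance computation, rather than generic ``anti-concentration for GAFs'', is the specific missing ingredient that makes the $\delta^{c_0} R^2$ bound come out uniformly in $n$.
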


The proofs of propositions~\ref{prop:sm do small} and~\ref{prop:sm do small_finite_deg} are postponed till Section~\ref{sec:small_components}.
The proof of Proposition \ref{prop:sm do small_finite_deg} also yields the {\em global} version
$$\E\left[\nod_{\delta/n-sm}(\Gamma_{n})\right] \le C_{1}\delta^{c_{0}} \cdot n$$
of \eqref{prop:sm do small_finite_deg} (cf. \eqref{eq:small dom ineq det1 glob}).

\subsection*{Acknowledgements}

We are grateful to P.\ Sarnak and M.\ Sodin for some fruitful conversations on the presented research, and,
in particular, concerning the small components estimates for non-Gaussian random fields.
The research leading to these results has received funding from
the German Research Foundation under Germany's Excellence Strategy  EXC 2044 -- 390685587, Mathematics M\"unster: Dynamics - Geometry - Structure (Z.K.) and
the European Research Council under the European Union’s Seventh Framework Programme
(FP7/2007-2013) / ERC grant agreements n$^{\text{o}}$ 335141 Nodal (I.W.).

\section{Properties of the random meromorphic function}
\label{sec:Psi_properties}

\subsection{Stationarity properties of \texorpdfstring{$\Psi$}{Psi}}
We recall that $\Psi(z)= G_1(z)/G_2(z)$, where $G_1$ and $G_2$ are two independent GEF's as in~\eqref{eq:G GAF def}, and $\proj$ is the stereographic projection \eqref{eq:zeta ster proj def}.
The basic properties of $\Psi$ are collected in the following lemma. Its first part states that although the Gaussian entire function $G$ is not stationary, it is well-behaved under time shifts.
\begin{lemma}\label{lem:GAF_transform}

\begin{enumerate}

\item Let $a,b\in\C$ with $|a|=1$. Then, we have the following distributional equality of random fields:
$$
\left(\frac{G(az+b)}{\eee^{\frac 12|b|^2 + a \bar b z}}\right)_{z\in\C}
\eqdistr
(G(z))_{z\in\C}.
$$
%\end{lemma}
%\begin{proposition}\label{prop:GAF_stationary}
\item The random field $\Psi$ is stationary and isotropic, that is for every $a,b\in\C$ with $|a|=1$ we have
$$
(\Psi(az+b))_{z\in\C} \eqdistr (\Psi(z))_{z\in\C}.
$$
\item For every rotation $g\in SO(3)$ of the unit sphere $\bS^2$, we have
$$
(g \proj^{-1} \Psi(z))_{z\in\C} \eqdistr ( \proj^{-1}\Psi(z))_{z\in\C}.
$$
\end{enumerate}
\end{lemma}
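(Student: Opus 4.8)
The plan is to deduce all three parts from the single principle that a centred complex Gaussian field is determined by its covariance $\E[\cdot\,\overline{\cdot}]$ and its pseudo-covariance $\E[\cdot\,\cdot]$. For part (1), fix $a,b$ with $|a|=1$ and set $H(z):=G(az+b)/\eee^{\frac12|b|^2+a\bar b z}$; this is a deterministic linear image of the centred complex Gaussian field $G$, hence centred complex Gaussian, so it is enough to match the two second moments against \eqref{eq:rG covar GEF def}. The pseudo-covariance $\E[H(z)H(w)]$ vanishes because $\E[G(u)G(v)]=0$ for all $u,v$. For the covariance I would insert $r_G(az+b,aw+b)=\eee^{(az+b)\overline{(aw+b)}}$, expand the exponent using $|a|=1$ as $z\bar w+a\bar b z+\bar a b\bar w+|b|^2$, and note that the normalizing denominator contributes precisely $\eee^{|b|^2+a\bar b z+\bar a b\bar w}$, so the quotient equals $\eee^{z\bar w}=r_G(z,w)$. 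This is the routine computation I would not belabour.

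For part (2), apply part (1) to $G_1$ and to $G_2$ with the \emph{same} parameters $a,b$. Since $G_1,G_2$ are independent and the factor $\eee^{\frac12|b|^2+a\bar b z}$ is a fixed deterministic function, the pair $\tilde G_i(z):=G_i(az+b)/\eee^{\frac12|b|^2+a\bar b z}$ consists of two independent GEF's, i.e.\ $(\tilde G_1,\tilde G_2)\eqdistr(G_1,G_2)$ as pairs of random fields. The common normalizing factor cancels in the quotient, so $\Psi(az+b)=G_1(az+b)/G_2(az+b)=\tilde G_1(z)/\tilde G_2(z)$, and the distributional identity for the pair yields $(\Psi(az+b))_z\eqdistr(\Psi(z))_z$; ranging over $|a|=1$ gives isotropy and over $b\in\C$ gives stationarity.

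For part (3), I would invoke the classical identification of rotations of $\bS^2$ with fractional linear maps in $PSU(2)$: under the stereographic projection \eqref{eq:zeta ster proj def}, every $g\in SO(3)$ satisfies $g\circ\proj^{-1}=\proj^{-1}\circ M_g$, where $M_g$ is the Möbius transformation $w\mapsto(\alpha w+\beta)/(-\bar\beta w+\bar\alpha)$ with $|\alpha|^2+|\beta|^2=1$. Hence $g\proj^{-1}\Psi(z)=\proj^{-1}M_g\Psi(z)$, and it suffices to prove $(M_g\Psi(z))_z\eqdistr(\Psi(z))_z$. Writing $\Psi=G_1/G_2$ and clearing denominators gives $M_g\Psi(z)=(\alpha G_1(z)+\beta G_2(z))/(-\bar\beta G_1(z)+\bar\alpha G_2(z))=\tilde G_1(z)/\tilde G_2(z)$, where $(\tilde G_1,\tilde G_2)$ is the image of $(G_1,G_2)$ under the unitary transformation $(u_1,u_2)\mapsto(\alpha u_1+\beta u_2,\,-\bar\beta u_1+\bar\alpha u_2)$. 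A short computation using \eqref{eq:rG covar GEF def} and $|\alpha|^2+|\beta|^2=1$ shows $\E[\tilde G_i(z)\overline{\tilde G_j(w)}]=\delta_{ij}\,\eee^{z\bar w}$ and $\E[\tilde G_i(z)\tilde G_j(w)]=0$, exactly as for $(G_1,G_2)$; since both pairs are jointly centred complex Gaussian, $(\tilde G_1,\tilde G_2)\eqdistr(G_1,G_2)$, whence $(M_g\Psi(z))_z\eqdistr(\Psi(z))_z$ and part (3) follows.

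The computations in all three parts are elementary; the only genuinely non-mechanical point is in part (3), where one must correctly invoke the identification of the $SO(3)$-action on $\bS^2$ with the $PSU(2)$-action by fractional linear transformations, and take a little care that the unitary-invariance argument is applied to the \emph{pair of Gaussian fields}, i.e.\ at the level of all finite-dimensional distributions, rather than merely to Gaussian vectors at a single point.
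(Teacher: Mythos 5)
Your proof is correct and follows essentially the same route as the paper's: part (1) by matching covariance and pseudo-covariance, part (2) by dividing numerator and denominator by the common normalizing factor and invoking part (1), and part (3) by identifying $\proj g\proj^{-1}$ with a Möbius transformation in $PSU(2)$ and checking that the induced unitary map on $(G_1,G_2)$ preserves the joint Gaussian law. There is no meaningful divergence from the paper's argument.
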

\begin{proof}%[Proof of Lemma \ref{lem:GAF_transform}]

For the first statement of Lemma \ref{lem:GAF_transform}, we notice that, since both random fields are mean zero complex Gaussian, it suffices to check the equality of covariance functions. For arbitrary $z,w\in\C$ we have
\begin{equation}\label{eq:cov_G_normalized}
\E \left[ \frac{G(az+b)}{\eee^{\frac 12|b|^2 + a \bar b z}} \cdot \overline{\left(\frac{G(aw+b)}{\eee^{\frac 12|b|^2 + a \bar b w}}\right)}\right]
=
\E \left[ \frac{G(az+b)\cdot \overline{G(aw+b)}}{\eee^{|b|^2 + a \bar b z + \bar a  b \bar w}} \right]
=
\frac{\eee^{(az+b)\overline{(aw+b)}}}{\eee^{|b|^2 + a \bar b z + \bar a  b \bar w}}
= \eee^{z\bar w},
\end{equation}
where we used that $a\bar a= |a|^2 = 1$. This proves the first claim of Lemma \ref{lem:GAF_transform}.
%\end{proof}

%\begin{proof}[Proof of Proposition \ref{prop:GAF_stationary}]
To prove the second claim of Lemma \ref{lem:GAF_transform}, write
$$
\Psi(az+b) = \frac{G_1(az+b)}{G_2(az+b)} = \frac{G_1(az+b)/\eee^{\frac 12|b|^2 + a \bar b z}}{G_2(az+b)/\eee^{\frac 12|b|^2 + a \bar b z}},
$$
and the statement follows from the first claim of Lemma \ref{lem:GAF_transform}. To prove the third one,
we need to show that
$$
\left(g\proj^{-1}\frac{G_1(z)}{G_2(z)}\right)_{z\in\C}
\eqdistr
\left(\proj^{-1}\frac{G_1(z)}{G_2(z)}\right)_{z\in\C}
$$
or, equivalently,
$$
\left(\proj g\proj^{-1}\frac{G_1(z)}{G_2(z)}\right)_{z\in\C}
\eqdistr
\left(\frac{G_1(z)}{G_2(z)}\right)_{z\in\C}.
$$
Note that $\proj g \proj^{-1}: \C \cup\{\infty\} \to \C \cup\{\infty\}$ is a fractional-linear transformation of the form
$$
\proj g \proj^{-1} (z) = \frac{\lambda z + \mu}{-\bar \mu z + \bar \lambda}, \quad \lambda,\mu\in\C, \quad |\lambda|^2 + |\mu|^2=1.
$$
Clearly,
$$
\proj g\proj^{-1}\frac{G_1(z)}{G_2(z)}= \frac{\lambda \frac{G_1(z)}{G_2(z)} + \mu}{-\bar \mu \frac{G_1(z)}{G_2(z)} + \bar \lambda}
=
\frac{\lambda G_1(z) + \mu G_2(z)}{- \bar \mu G_1(z) + \bar \lambda G_2(z)} =: \frac{H_1(z)}{H_2(z)},
$$
where $H_1(z) = \lambda G_1(z) + \mu G_2(z)$ and $H_2(z)= - \bar \mu G_1(z) + \bar \lambda G_2(z)$. Now, $H_1$ and $H_2$, as well as their joint multivariate distributions, are complex Gaussian with zero mean and
$$
\E [H_1(z) \overline{H_1(w)}] = \E [H_2(z) \overline{H_2(w)}] = \eee^{z\bar w}
$$
and
$$
\E [H_1(z) \overline{H_2(w)}] = \E[(\lambda G_1(z) + \mu G_2(z)) (- \mu \overline{G_1(z)} +  \lambda \overline{G_2(z)})] = 0.
$$
Hence, the joint distribution of the pair $(H_1,H_2)$ is the same as of the pair $(G_1,G_2)$, which proves the claim.
\end{proof}

\begin{remark}
Since the uniform distribution is the only rotationally invariant distribution on $\bS^2$, we conclude that for every $z\in\C$, $\proj^{-1}\Psi(z)$ is uniform on $\bS^2$. Equivalently, $$\Psi(z) = \frac{G_1(z)}{G_2(z)}$$ has probability density $\pi^{-1}(1+|z|^2)^{-2}$ w.r.t.\ the Lebesgue measure on $\C$, which could also be established by a direct computation.
\end{remark}
\begin{remark}\label{rem:random_meromorphic_CPd}
Generalizing the above approach, it is possible to define a natural random holomorphic map from $\C$ to $\mathbb{CP}^d$. Recall that the complex projective space $\mathbb{CP}^d$ consists of all tuples $[z_1,\ldots,z_{d+1}]\in \mathbb C^d$,   $[z_1,\ldots,z_{d+1}]\neq (0,\ldots,0)$,  where the tuples $[z_1,\ldots,z_{d+1}]$ and $[\lambda z_1,\ldots, \lambda z_{d+1}]$  are considered to be equivalent for $\lambda \in \C\backslash\{0\}$. Let now $$(G_1(z))_{z\in\C}, \ldots, (G_{d+1}(z))_{z\in\C}$$ be independent copies of the Gaussian analytic function $(G(z))_{z\in\C}$. Consider a random holomorphic map $\Psi:\C \to \mathbb{CP}^d$ defined by
$$
\Psi(z) = [G_1(z),\ldots, G_{d+1}(z)], \quad z\in\C.
$$
Following along the above arguments, it is possible to show that
$$
(\Psi(az+b))_{z\in\C} \eqdistr (\Psi(z))_{z\in\C}
\quad\text{ and }\quad
(U (\Psi(z)))_{z\in\C} \eqdistr (\Psi(z))_{z\in\C}
$$
for every $a,b\in\C$ with $|a|=1$ and for every unitary transformation $U:\C^{d+1} \to \C^{d+1}$. Moreover, for every $z\in\C$, $\Psi(z)$ is distributed on $\mathbb{CP}^d$ according to the Fubini-Study volume (normalized to be a probability measure).
\end{remark}

\subsection{Mixing}
\label{sec:mixing}
Let $\bA$ be the space of all holomorphic maps from $\C$ to $\bS^2$ (any such map can be naturally identified with a meromorphic function) endowed with the topology of uniform convergence on compact sets. The unit sphere $\bS^2$ is endowed with the usual geodesic metric. We denote
by $\mathscr B(\bA)$ the Borel $\sigma$-algebra generated by the open subsets of $\bA$. For each $t\in\C$ consider the map $T_t:\bA\to\bA$ defined by shifting the function by $t$:
$$
T_t f(z) = f(z-t), \quad f\in \bA.
$$
It is clear that $T_0$ is the identity map and that $T_t T_s = T_{t+s}$ for all $t,s\in \C$, making $(T_{t})_{t\in\C}$ a flow.
Also, $(t, f)\mapsto T_t f$ defines a continuous map from $\C\times \bA$ to $\bA$, which implies that the flow is measurable.
Let furthermore $\Q$ denote the probability law of $\proj^{-1}\Psi$, where $\Psi$ is the random meromorphic function defined in~\eqref{eq:Psi def} above, that is $\Q$ is a probability measure on $(\bA, \mathscr B(\bA))$ defined by $\Q[B] = \P[\proj^{-1}\Psi\in B]$ for every Borel set $B\in \mathscr B(\bA)$. By Lemma
\ref{lem:GAF_transform}, $(T_t)_{t\in\C}$ is a measure-preserving flow on the probability space $(\bA, \mathscr B(\bA), \Q)$.
\begin{proposition}\label{prop:mixing}
The flow $(T_t)_{t\in\C}$ is mixing, that is for every events $A,B \in \mathscr B(\bA)$ we have
$$
\lim_{|t|\to\infty} \Q[(T_t A) \cap B] = \Q[A] \cdot \Q[B].
$$
\end{proposition}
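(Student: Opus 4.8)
The plan is to reduce mixing of the flow $(T_t)_{t\in\C}$ to a decorrelation statement for the underlying Gaussian fields $G_1,G_2$, and then exploit the fact that the covariance $r_G(z,w)=\eee^{z\bar w}$ decays extremely fast once one normalizes correctly. First I would note that it suffices to verify the mixing identity for $A$ and $B$ in a convergence-determining class: since $\mathscr B(\bA)$ is generated by cylinder sets depending on the restriction of $f\in\bA$ to a fixed compact set $K\subset\C$ (equivalently, on finitely many "evaluations" in a suitable sense), a standard approximation argument shows it is enough to prove
$$
\lim_{|t|\to\infty}\Q\big[(T_tA)\cap B\big]=\Q[A]\,\Q[B]
$$
when $A$ depends only on $f|_K$ and $B$ depends only on $f|_K$ for some fixed compact $K$. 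Translating back through $\Q=\P\circ(\proj^{-1}\Psi)^{-1}$, the event $T_tA$ corresponds to the behaviour of $\Psi(\cdot - t)$ on $K$, i.e.\ the behaviour of $(G_1,G_2)$ on the \emph{shifted} set $K-t$, while $B$ corresponds to $(G_1,G_2)$ on $K$. Using part (1) of Lemma \ref{lem:GAF_transform} with $a=1$, $b=-t$, the field $(G_j(z-t)/\eee^{\frac12|t|^2-\bar t z})_{z\in K}$ is equal in law to $(G_j(z))_{z\in K}$; since dividing $G_1$ and $G_2$ by the same nonvanishing factor does not change the quotient, $\Psi(\cdot-t)$ restricted to $K$ has the same law as $\Psi$ restricted to $K$ for \emph{every} $t$ (this is just stationarity), so the real content is the asymptotic \emph{independence} of the pair $(\Psi|_K,\ \Psi(\cdot-t)|_K)$ as $|t|\to\infty$.

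The key step is therefore the following Gaussian decorrelation estimate: for fixed compact $K$, as $|t|\to\infty$, the joint Gaussian field $\big((G_1(z),G_2(z))_{z\in K},\ (G_1(z-t),G_2(z-t))_{z\in K}\big)$, after dividing the second block by $\eee^{\frac12|t|^2-\bar t z}$, converges in distribution (on $C(K)$, hence in the topology of uniform convergence on compacts after exhausting $\C$ by such $K$) to two \emph{independent} copies of $(G_1,G_2)|_K$. This reduces to showing that the cross-covariance between the normalized blocks tends to $0$ uniformly on $K\times K$: for $z,w\in K$,
$$
\E\!\left[G_j(z)\,\overline{\frac{G_j(w-t)}{\eee^{\frac12|t|^2-\bar t w}}}\right]
=\frac{\eee^{z\overline{(w-t)}}}{\eee^{\frac12|t|^2-t\bar w}}
=\eee^{z\bar w}\cdot \eee^{-z\bar t-\frac12|t|^2+t\bar w}
=\eee^{z\bar w}\cdot\eee^{-\frac12|t|^2}\cdot\eee^{t\bar w-\bar t z},
$$
and since $|\eee^{t\bar w-\bar t z}|\le \eee^{2|t|\sup_K|\cdot|}$ on the compact $K$, the whole expression is $O(\eee^{-\frac12|t|^2+c_K|t|})\to0$. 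The same computation applies to the $G_j$--$\overline{G_k}$ cross terms and to the vanishing "$\E[G\cdot G]$" type terms. By a standard fact for Gaussian vectors (convergence of covariances to a block-diagonal limit implies convergence in distribution to the independent coupling, plus tightness in $C(K)$ which follows from the explicit Gaussian tail bounds on $\sup_K|G_j|$), we obtain the desired asymptotic independence.

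Finally I would assemble the pieces: writing $A=\{f|_K\in A'\}$ and $B=\{f|_K\in B'\}$ with $A',B'$ in a sub-$\sigma$-algebra, continuity of $\proj^{-1}:\C\cup\{\infty\}\to\bS^2$ transports the Gaussian decorrelation to $\bA$, giving
$$
\Q[(T_tA)\cap B]=\P\big[\proj^{-1}\Psi(\cdot-t)|_K\in A',\ \proj^{-1}\Psi|_K\in B'\big]\xrightarrow[|t|\to\infty]{}\Q[A]\cdot\Q[B],
$$
first for $A,B$ in the generating class and then, by the usual monotone-class / approximation argument, for all $A,B\in\mathscr B(\bA)$. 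The main obstacle I anticipate is technical rather than conceptual: making the reduction to cylinder events fully rigorous in the function-space topology of $\bA$ (where "finite-dimensional" events are really events depending on $f|_K$), and in particular justifying that decorrelation of covariances on $C(K)$ upgrades to convergence in distribution of the $\bA$-valued pair — this needs the right tightness input, which is supplied by the Gaussian concentration of $\sup_{K}|G_j|$ together with the fact that the normalizing factors $\eee^{\frac12|t|^2-\bar t z}$ are bounded away from $0$ and $\infty$ on each fixed $K$ (for each fixed $t$). Care is also needed because $\Psi$ takes values in $\bS^2$ and can hit $\infty$; but since $G_2$ has a.s.\ no zeros accumulating and $\proj^{-1}$ is a homeomorphism of $\C\cup\{\infty\}$ onto $\bS^2$, working directly with the $\bS^2$-valued field $\proj^{-1}\Psi$ (as the statement does) circumvents this.
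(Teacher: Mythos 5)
Your proposal is essentially the paper's proof. Both reduce to a generating $\pi$-system of cylinder events, both use Lemma~\ref{lem:GAF_transform}(1) to normalize the shifted fields so that the quotient is unchanged, both compute that the normalized cross-covariance $\eee^{z\bar w-z\bar t-\frac12|t|^2+t\bar w}$ is $O(\eee^{-\frac12|t|^2+c_K|t|})\to 0$, and both use Gaussianity to convert covariance convergence into asymptotic independence, followed by a $\pi$-$\lambda$/monotone-class extension (the paper states this as Lemma~\ref{lem:mixing}). The one place where the paper is simpler than your plan: rather than taking cylinders of the form $\{f|_K\in A'\}$ and having to establish tightness of the normalized pair on $C(K)$, the paper restricts directly to \emph{finite-dimensional} cylinder events $\{f:(f(t_1),\dots,f(t_d))\in E\}$ with $E\subset(\bS^2)^d$ open and of null boundary; then the decorrelation step is just convergence of a $4d\times 4d$ covariance matrix of a complex Gaussian vector, so no tightness input is needed, and the Portmanteau theorem finishes. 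This sidesteps the technical obstacle you flag at the end, while everything else matches your outline.
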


Proposition \ref{prop:mixing} implies, in particular, the ergodicity of the flow $(T_t)_{t\in\C}$, sufficient for our needs, for the purpose of
giving a proof for Theorem \ref{theo:cns_existence} (see Section~\ref{sec:NS lim scal thm proof} below). To prove Proposition \ref{prop:mixing}
we require the following well-known lemma; c.f.~\cite[Theorem 1.17 on p.~41]{walters_book}). For the sake of the reader's convenience we include its standard proof immediately below.

\begin{lemma}\label{lem:mixing}
Let $(T_t)_{t\in\R^d}$ be a measure-preserving flow on a probability space $(E, \cF, \mu)$ such that, for all $A,B$ in a system of sets $\cP\subset \cF$, we have
$$
\lim_{|t|\to\infty} \mu \{T_t A \cap B\} =  \mu\{A\}\cdot \mu\{B\}.
$$
If the system $\cP$ is a $\pi$-system (i.e.,\ for $A,B\in \cP$ we also have $A\cap B\in \cP$) and the $\sigma$-algebra generated by $\cP$ is $\cF$, then $(T_t)_{t\in\R^d}$ is mixing.
\end{lemma}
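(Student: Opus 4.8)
The plan is to apply the Dynkin $\pi$--$\lambda$ theorem twice: first to promote the mixing identity from pairs in $\cP$ to pairs $(A,B)$ with $A\in\cF$ and $B\in\cP$, and then to promote it to arbitrary $A,B\in\cF$.

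First I would fix $B\in\cP$ and consider
$$
\mathcal{L}_B := \Bigl\{\, A\in\cF \;:\; \lim_{|t|\to\infty}\mu\{T_t A\cap B\}=\mu\{A\}\,\mu\{B\}\,\Bigr\}.
$$
By hypothesis $\cP\subseteq\mathcal{L}_B$, so it is enough to check that $\mathcal{L}_B$ is a $\lambda$-system; then the $\pi$--$\lambda$ theorem gives $\cF=\sigma(\cP)\subseteq\mathcal{L}_B$. That $E\in\mathcal{L}_B$ is clear, since $T_tE=E$ (mod $\mu$), so $\mu\{T_tE\cap B\}=\mu\{B\}$. Closure under proper differences follows from the fact that $T_t$ is an invertible measure-preserving map, so $T_tA\subseteq T_tA'$ whenever $A\subseteq A'$, and hence
$$
\mu\{T_t(A'\setminus A)\cap B\}=\mu\{T_tA'\cap B\}-\mu\{T_tA\cap B\}\longrightarrow \mu\{A'\}\mu\{B\}-\mu\{A\}\mu\{B\}=\mu\{A'\setminus A\}\,\mu\{B\}
$$
as $|t|\to\infty$. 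The one slightly delicate point is closure under increasing unions: if $A_n\uparrow A$ with $A_n\in\mathcal{L}_B$, then given $\eps>0$ I pick $n$ with $\mu\{A\setminus A_n\}<\eps$, and from
$$
\bigl|\mu\{T_tA\cap B\}-\mu\{T_tA_n\cap B\}\bigr|\le\mu\{T_t(A\setminus A_n)\}=\mu\{A\setminus A_n\}<\eps\qquad\text{for every }t
$$
one obtains $\limsup_{|t|\to\infty}\bigl|\mu\{T_tA\cap B\}-\mu\{A\}\mu\{B\}\bigr|\le 2\eps$; letting $\eps\downarrow 0$ gives $A\in\mathcal{L}_B$.

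For the second step I would fix $A\in\cF$ and run the same argument with the roles of $A$ and $B$ reversed: the class $\mathcal{L}'_A:=\{B\in\cF:\lim_{|t|\to\infty}\mu\{T_tA\cap B\}=\mu\{A\}\mu\{B\}\}$ contains $\cP$ by the first step, and it is a $\lambda$-system by the same three verifications (in the increasing-union step one now uses $\mu\{T_tA\cap(B'\setminus B)\}\le\mu\{B'\setminus B\}$). Hence $\cF=\sigma(\cP)\subseteq\mathcal{L}'_A$, which is exactly the assertion that $(T_t)_{t\in\R^d}$ is mixing. The only step that is more than bookkeeping is the closure of $\mathcal{L}_B$ and $\mathcal{L}'_A$ under increasing unions, and there the decisive fact is that $\mu$ is a \emph{finite} measure, so that $\mu\{A\setminus A_n\}\to 0$ yields an estimate on $|\mu\{T_tA\cap B\}-\mu\{T_tA_n\cap B\}|$ that is uniform in $t$; everything else is the routine Dynkin-class argument.
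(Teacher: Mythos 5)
Your proof is correct and follows the same two-stage Dynkin $\pi$--$\lambda$ argument as the paper: show that the class of $A$ satisfying the mixing limit for fixed $B\in\cP$ is a $\lambda$-system containing $\cP$, deduce it equals $\cF$, then repeat with the roles of $A$ and $B$ reversed. The only cosmetic difference is that you use the ($E$, proper differences, increasing unions) axiomatization of a $\lambda$-system, handling the increasing-union step by a uniform-in-$t$ $\eps$-approximation, whereas the paper uses the (complements, countable disjoint unions) axiomatization handled by dominated convergence; both estimates rely on the finiteness of $\mu$ in exactly the same way, and the paper's single class $\cG$ is just the intersection over $B\in\cP$ of your classes $\mathcal{L}_B$.
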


\begin{proof}[Proof of Lemma~\ref{lem:mixing}]
Consider the class $\cG$ of all sets $A\in \cF$ such that $$\lim_{|t|\to\infty} \mu \{T_t A \cap B\} =  \mu\{A\}\cdot \mu\{B\}$$ for all $B\in \cP$. This class contains $\cP$ and is a $\lambda$-system, that is it is closed under taking complements and countable \textit{disjoint} unions. To prove the latter claim, let $A_1,A_2\ldots\in\cG$ be a pairwise disjoint countable collection of sets.
Then $$\lim_{|t|\to\infty} \mu \{T_t A_k \cap B\} = \mu\{A_k\} \cdot \mu\{B\}$$ for every $k\in\N$, $B\in\cP$. Since $\mu \{T_t A_k \cap B\}\leq \mu\{A_k\}$, and $\sum_{k=1}^\infty \mu\{A_k\} \leq 1$, we can apply the Dominated
Convergence Theorem (with sums regarded as special case of integrals) to conclude that
$$
\mu \left\{T_t \left(\cup_{k=1}^\infty A_k\right) \cap B\right\} =
\sum_{k=1}^\infty \mu \{T_t A_k \cap B\}
\to \sum_{k=1}^\infty \mu\{A_k\}\cdot \mu\{B\}
=\mu\left\{\cup_{k=1}^\infty A_k\right\}\cdot \mu\{B\},
$$
as $|t|\to\infty$,
thus proving that $\cG$ is a $\lambda$-system.

By the $\pi$-$\lambda$-theorem, $\cG$ includes the $\sigma$-algebra generated by $\cP$, which is $\cF$. The given argument shows that
\begin{equation}
\label{eq:mix A,B restr B}
\lim_{|t|\to\infty} \mu \{T_t A \cap B\} =  \mu\{A\} \mu\{B\}
\end{equation}
for all $A\in \cF$, $B\in\cP$. Repeating the same argument with the roles of $A$ and $B$ reversed, one easily shows that, in fact, \eqref{eq:mix A,B restr B} holds for all $B\in\cF$.
\end{proof}

\begin{proof}[Proof of Proposition~\ref{prop:mixing}]
We prove the claim for sets $A$ and $B$ of the following form:
\begin{align*}
A &= \{f\in \bA\colon (f(t_1), \ldots, f(t_d)) \in E\},\\
B &= \{f\in \bA\colon ( f(s_1), \ldots, f(s_d)) \in F\},
\end{align*}
where $E\subset (\bS^2)^d$ and $F\subset (\bS^2)^d$ are open subsets whose boundary has zero measure.
By Lemma~\ref{lem:mixing}, this implies mixing.
We have
\begin{align*}
\Q [T_t A\cap B]
&=
\P\left[\left(\proj^{-1}\left(\frac{G_1(t_k+t)}{G_2(t_k+t)}\right)\right)_{k=1}^d \in E, \left(\proj^{-1}\left(\frac{G_1(s_k)}{G_2(s_k)}\right)\right)_{k=1}^d \in F \right]\\
&=
\P\left[\left(\proj^{-1}\left(\frac{G_1(t_k+t)/ \eee^{\frac 12 |t|^2 + \bar t t_k}}{G_2(t_k+t)/ \eee^{\frac 12 |t|^2 + \bar t t_k}}\right)\right)_{k=1}^d \in E, \left(\proj^{-1}\left(\frac{G_1(s_k)}{G_2(s_k)}\right)\right)_{k=1}^d \in F \right].
\end{align*}
We now consider the complex Gaussian random vector with the following $4d$ components:
\begin{equation}\label{eq:vector_G1_G2}
\frac{G_1(t_k+t)}{\eee^{\frac 12 |t|^2 + \bar t t_k}},
\;\;
\frac{G_2(t_k+t)}{\eee^{\frac 12 |t|^2 + \bar t t_k}},
\;\;
G_1(s_k),
\;\;
G_2(s_k),
\quad k=1,\ldots,d.
\end{equation}
Let us look at the covariance matrix of this vector as $|t|\to\infty$. First of all, for every $t\in\C$ we have
\begin{equation*}
\begin{split}
\E \left[\frac{G_1(t_k+t)}{\eee^{\frac 12 |t|^2 + \bar t t_k}}\cdot \overline{\frac{G_2(t_j+t)}{\eee^{\frac 12 |t|^2 + \bar t t_j}}} \right]
&=
\E[G_1(s_k)\cdot \overline{G_2(s_j)}]
\\&=
\E \left[\frac{G_1(t_k+t)}{\eee^{\frac 12 |t|^2 + \bar t t_k}}\cdot \overline{G_2(s_j)}\right]
=
\E \left[\frac{G_2(t_k+t)}{\eee^{\frac 12 |t|^2 + \bar t t_k}} \cdot\overline{G_1(s_j)}\right]
=  0
 \end{split}
\end{equation*}
by the independence of $G_1$ and $G_2$. Further, it easily follows from~\eqref{eq:cov_G_normalized} that
$$
\E \left[\frac{G_1(t_k+t)}{\eee^{\frac 12 |t|^2 + \bar t t_k}} \cdot \overline{\left(\frac{G_1(t_j+t)}{\eee^{\frac 12 |t|^2 + \bar t t_j}}\right)} \right] = \eee^{t_k \overline{t_j}}.
$$
Finally, as $|t|\to\infty$ we have
\begin{align*}
\E \left[\frac{G_1(t_k+t)}{\eee^{\frac 12 |t|^2 + \bar t t_k}} \cdot\overline{G_1(s_j)}\right]
=
\E \left[\frac{G_2(t_k+t)}{\eee^{\frac 12 |t|^2 + \bar t t_k}} \cdot\overline{G_2(s_j)}\right]
= \eee^{(t_k+t)\overline{s_j}- \frac 12 |t|^2 - \bar t t_k } \to 0.
\end{align*}
Since the vector~\eqref{eq:vector_G1_G2} is complex Gaussian, it follows that, as $|t|\to\infty$, it converges weakly to the complex Gaussian vector with $4d$ components
$$
H_1(t_k), \;\; H_2(t_k), \;\; G_1(s_k), \;\; G_2(s_k), \quad  k=1,\ldots,d,
$$
where $(H_1(z))_{z\in\C}$ and $(H_2(z))_{z\in\Z}$ are mutually independent copies of $G$ that are independent of everything else. Thus, by the Portmanteau theorem,
\begin{align*}
\lim_{|t|\to\infty} \Q [T_t A\cap B]
&=
\P\left[\left(\proj^{-1}\left(\frac{H_1(t_k)}{H_2(t_k)}\right)\right)_{k=1}^d \in E, \left(\proj^{-1}\left(\frac{G_1(s_k)}{G_2(s_k)}\right)\right)_{k=1}^d \in F\right]\\
&=
\P\left[\left(\proj^{-1}\left(\frac{H_1(t_k)}{H_2(t_k)}\right)\right)_{k=1}^d \in E\right] \cdot \P\left[\left(\proj^{-1}\left(\frac{G_1(s_k)}{G_2(s_k)}\right)\right)_{k=1}^d \in F\right]\\
&=
\Q[A] \cdot \Q[B],
\end{align*}
thus concluding the proof of Proposition \ref{prop:mixing}.
%\begin{align*}
%&\E \left[\frac{G_1(t_k+t)}{\eee^{\frac 12 |t|^2 + \bar t t_k}} \frac{G_1(t_j+t)}{\eee^{\frac 12 |t|^2 %+ \bar t t_j}} \right] = \eee^{t_k \overline{t_j}},\\
%&\E \left[\frac{G_1(t_k+t)}{\eee^{\frac 12 |t|^2 + \bar t t_k}} \frac{G_2(t_j+t)}{\eee^{\frac 12 |t|^2 %+ \bar t t_j}} \right] = 0,\\
%&\E \left[\frac{G_1(t_k+t)}{\eee^{\frac 12 |t|^2 + \bar t t_k}} G_1(s_j)\right]
%= \eee^{(t_k+t)\overline{s_j}- \frac 12 |t|^2 - \bar t t_k } \to 0,\\
%&\E \left[\frac{G_1(t_k+t)}{\eee^{\frac 12 |t|^2 + \bar t t_k}} G_2(s_j)\right]
%= 0,\\
%&\E \left[\frac{G_2(t_k+t)}{\eee^{\frac 12 |t|^2 + \bar t t_k}} G_1(s_j)\right]
%=0,\\
%&\E \left[\frac{G_2(t_k+t)}{\eee^{\frac 12 |t|^2 + \bar t t_k}} G_2(s_j)\right]
%= \eee^{(t_k+t)\overline{s_j}- \frac 12 |t|^2 - \bar t t_k } \to 0.
%\end{align*}
\end{proof}
\begin{corollary}
The flow $(T_t)_{t\in\C}$ is ergodic, that is the probability of every invariant set is $0$ or $1$.
\end{corollary}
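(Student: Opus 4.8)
The plan is to obtain ergodicity as an immediate consequence of the mixing property proved in Proposition~\ref{prop:mixing}, via the classical implication ``mixing $\Rightarrow$ ergodic''. First I would fix a set $A\in\mathscr B(\bA)$ that is invariant under the flow, i.e.\ $\Q[(T_tA)\triangle A]=0$ for every $t\in\C$; equivalently, the indicator $\ind_A$ is $\Q$-a.s.\ invariant under each $T_t$. The goal is to show $\Q[A]\in\{0,1\}$.

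Next I would apply the mixing statement of Proposition~\ref{prop:mixing} with the choice $B:=A$, which gives
\[
\lim_{|t|\to\infty}\Q[(T_tA)\cap A]=\Q[A]\cdot\Q[A].
\]
On the other hand, invariance of $A$ forces $\Q[(T_tA)\cap A]=\Q[A]$ for every $t\in\C$, since $T_tA$ and $A$ differ only by a $\Q$-null set. Letting $|t|\to\infty$ and comparing the two expressions yields $\Q[A]=\Q[A]^2$, hence $\Q[A]\in\{0,1\}$, which is exactly the assertion of the corollary; the ergodicity of $(T_t)_{t\in\C}$ follows.

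The only subtlety — and thus the (mild) main obstacle — is the bookkeeping around the notion of invariance in continuous time: a priori one is given invariance modulo $\Q$-null sets separately for each $t$, whereas one would like a single measurable modification of $A$ that is strictly invariant under the whole flow simultaneously. Because the action $(t,f)\mapsto T_tf$ is jointly measurable and $(T_t)_{t\in\C}$ preserves $\Q$, a routine Fubini argument produces such a modification without changing $\Q[A]$, so the $0$--$1$ dichotomy is unaffected. No input beyond Proposition~\ref{prop:mixing} and standard measure theory is required.
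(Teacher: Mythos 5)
Your argument is correct and is exactly the standard ``mixing $\Rightarrow$ ergodic'' implication that the paper invokes implicitly by stating this as a corollary of Proposition~\ref{prop:mixing} without further proof. The remark about modifying $A$ to a strictly invariant version is harmless but unnecessary, since the identity $\Q[(T_tA)\cap A]=\Q[A]$ already follows from invariance modulo $\Q$-null sets.
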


\begin{remark}
Slightly generalizing the above proof one can show that $(T_t)_{t\in\C}$ is mixing of all orders, that is for every $m\in\N$ and every $A_1,\ldots,A_m\in\mathscr B(\bA)$,
$$
\lim_{\substack {|t_i -t_j|\to\infty\\\forall i\neq j}} \Q[T_{t_1} A_1  \cap \ldots \cap T_{t_m} A_m] = \Q[A_1]\cdot\ldots\cdot \Q[A_m].
$$
\end{remark}
%QQ: Shall we include further ergodic properties? For example, there is no Kolmogorov $K$-property here. Or: for every meromorphic function $f$ on a bounded simply-connected domain and every $\eps>0$ there is a shift of $U$ that is $\eps$-close to $f$.

\begin{remark}
Generalizing the above arguments, it is possible to show the ergodicity of the random holomorphic function $\Psi: \C\to \mathbb{CP}^d$ defined in Remark~\ref{rem:random_meromorphic_CPd} above. As a corollary, one can show that the surface $\Psi(\C)$ fills $\mathbb{CP}^d$ w.r.t. the Fubini-Study measure. Namely, let $\mu_n$ be the image measure (under $\Psi$) of the uniform  probability distribution on the square $[-n,n]^2$, that is $\mu_n$ is a (random) probability measure on $\mathbb {CP}^d$ defined by $\mu_n(A) = \frac 1 {4n^2}\text{Leb}\{\Psi^{-1}(A)\cap[-n,n]^2\}$. We claim that, as $n\rightarrow\infty$, $\mu_n$ converges weakly to the Fubini-Study probability measure on $\mathbb{CP}^d$, with probability $1$. To see this, take any continuous test function $f: \mathbb{CP}^d\to \R$ and observe that
$$
\int_{\mathbb{CP}^d} f\dd \mu_n = \frac 1 {4n^2} \int_{[-n,n]^2} f(\Psi(z)) |\dd z| \toas \E [f(\Psi(0))]
$$
by the ergodic theorem. Since $\Psi(0)$ is distributed according to the Fubini-Study probability measure, the claim follows.
\end{remark}

\subsection{Proof of Theorem \ref{theo:cns_existence}: establishing the Nazarov-Sodin type limit}
\label{sec:NS lim scal thm proof}

Recalling the notation of Section~\ref{sec:mixing},
we consider two commuting measure-preserving transformations of the probability space $(\bA, \mathscr B(\bA), \Q)$ given by backward shifting the meromorphic functions in horizontal and vertical directions, namely $\tau_1 = T_{-1}$ and $\tau_2 = T_{-i}$.
Let $\mathscr H$ be the family of all non-empty finite subsets of $\Z^2$, and
define the process $X=(X_A)_{A\in\mathscr H}$ indexed by $\mathscr{H}$ in the following way. Given a finite set $A\subset \Z^2$ and a meromorphic function $\omega\in \bA$ we let $X_A(\omega)$ to be the number of connected components of the set $\{z\in\C\colon |\omega(c)|=1\}$ lying entirely inside the finite union of squares $$A^*:=\{z\in\C\colon (\lfloor \Re z\rfloor, \lfloor \Im z\rfloor) \in A\}$$ with bottom left corner belonging to $A$.

Let us also recall the multiparameter subadditive ergodic theorem \cite[Theorem 2.14, page 210]{krengel_book}, we first explain in an abstract setting, with the broad plan of applying it on the said process $X$ within the specified context. Let $\tau_1$ and $\tau_2$ be two commuting measure-preserving automorphisms of a probability space. They generate a group $(\tau_u)_{u\in\Z^2}$ of measure preserving transformations given by $\tau_u = \tau_1^{u_1} \tau_2^{u_2}$ for $u=(u_1,u_2)\in\Z^2$. A family of integrable random variables $X=(X_{A})_{A\in \mathscr H}$ indexed by $\mathscr H$ is called a \emph{superadditive process} if it satisfies the following three conditions:
\begin{itemize}
\item[(a)] $X_A\circ \tau_u = X_{A+u}$ a.s.\ for all $A\in \mathscr H$ and $u\in\Z^2$.
\item[(b)] $X_{A\cup B} \geq X_{A}+X_{B}$ for all disjoint sets $A,B\in \mathscr H$.
\item[(c)] $\sup\left\{\E [X_A]/|A|\colon A\in \mathscr H\right\}<\infty$, where $|A|$ is the cardinality of $A$.
\end{itemize}
If $X$ as above is a superadditive process, then the superadditive ergodic theorem~\cite[Theorem 2.14, page 210]{krengel_book}\footnote{See also~\cite[p.~165]{nguyen} for the $L^1$-convergence, \cite{nguyen_zessin} for the notation used in~\cite{nguyen}, and~\cite{grillenberger_krengel}, where the strong subadditivity assumption is removed.} states that for every increasing sequence of convex bounded sets $V_1\subset V_2\subset \ldots \subset \R^2$
with
\begin{equation}
\label{eq:union Vk=R^2}
\bigcup\limits_{k=1}^{\infty} V_k =\R^2,
\end{equation}
we have
\begin{equation}
\label{eq:ergodicity imply concrete}
\frac{X_{V_{k}\cap \Z^2}}{|V_{k}\cap \Z^2|}  \tok X_\infty \quad \text{a.s.\ and in $L^1$}
\end{equation}
for some random variable $X_\infty$.

\begin{proof}[Proof of Theorem \ref{theo:cns_existence}]
Our aim is to show that
$$
\frac {\nod(\Gamma;R)}{\pi R^2}  \toR \cns \quad \text{a.s.\ and in $L^1$}
$$
for some constant $\cns>0$. To this end we plan to apply the aforementioned superadditive ergodic theorem,
yielding that \eqref{eq:ergodicity imply concrete} holds for an increasing sequence $\{V_{k}\}_{k\ge 1}$ of convex bounded sets
satisfying \eqref{eq:union Vk=R^2}, provided that the conditions (a)-(c) hold, which we verify next.
First, the superadditivity property (b) follows immediately from the definition, as does the property (a).

To verify condition (c), we could bound above the expected number of critical points and poles of $\Psi$ on a compact domain, also bounding the number of nodal components lying entirely inside that domain. Instead, we simplify the said procedure by arguing as follows.
We need to show that the expected number of connected components of $\Gamma$ in $A^*$ is bounded above by $C |A|$, for some absolute constant $C$. To every component $\gamma$ of $\Gamma$ we can assign the unique connected component (nodal domain) $G_\gamma$ of the set
$\{z\in\C\colon |\Psi(z)|\neq 1\}$, which is located inside $\gamma$ and is adjacent to $\gamma$, and it is clear that the map
$\gamma\mapsto G_\gamma$ is injective. We claim that necessarily $G_\gamma$ contains either a zero or a pole of $\Psi$. Indeed, the function $|\Psi(z)|$ equals $1$ on the boundary of $G_\gamma$. Assume that $G_\gamma$ contains no zeroes and no poles of $\Psi$. Then, both $\Psi$ and $1/\Psi$ are analytic on $G_\gamma$. By the maximum modulus principle applied to these functions, we have $|\Psi(z)|=1$ for all $z\in G_\gamma$, and then, by the Open Mapping Theorem, the function $\Psi$ must be constant. Since this event is of probability $0$, the expected number of connected components of $\Gamma$ inside $A^*$ is bounded above by the expected number of zeroes and poles of $\Psi$ in $A^*$, which is easily evaluated to be $(2/\pi) |A|$ by~\cite[Remark~2.4.5]{hough_book}.

In this setting, the superadditive ergodic theorem applied to the sequences of sets $V_k:= D(0;k-10)$ and $W_k:= D(0; k+10)$ states that
$$
\frac{X_{V_k\cap \Z^2}}{\pi k^2} \tok X_\infty
\quad \text{ and }\quad
\frac{X_{W_k\cap \Z^2}}{\pi k^2} \tok X_\infty
\quad \text{a.s.\ and in $L^1$},
$$
for some random variable $X_\infty$. The
limits of both sequences are equal because $V_k\subset W_k \subset V_{k+20}$.
Since $(V_{\lfloor R \rfloor}\cap \Z^2)^* \subset D(0;R) \subset (W_{\lfloor R \rfloor}\cap \Z^2)^*$, we arrive at
$$
\frac{\nod(\Gamma; R)}{\pi R^2} \toR X_\infty \quad \text{a.s.\ and in $L^1$}.
$$
The fact that the random variable $X_\infty$ is a.s.\ constant follows from the ergodicity of our process $\Psi$; see Proposition~\ref{prop:mixing}. Indeed, by a sandwich argument similar to the one used above, the random variable $X_\infty$ is invariant under the flow $(\tau_u)_{u\in\Z^2}$, that is $X_\infty \circ \tau_u = X_\infty$ a.s. for all $u\in\Z^2$.
Then, ergodicity implies that  $X_\infty$ a.s.\ equals to some constant that we denote by $\cns$.

The strict positivity of $\cns$ follows from the work of Lerario and Lundberg~\cite[Corollary on p.~653]{lerario_lundberg}
(upon bearing in mind the fact that the constants $c_{NS}$ in our theorems~\ref{theo:main} and~\ref{theo:cns_existence} are the same).
For the sake of the completeness of our arguments and the convenience of the reader we briefly sketch a different argument. For the positivity of $\cns$ it suffices to check that the expected number of connected components of $\Gamma$ entirely contained in $(0,1)^2$ is strictly positive. Take some deterministic analytic function $f:\C\to\C$ so that the set $\{|f|=1\}$ has at least one connected component contained in $(0,1)^2$. This property is shared by all analytic functions that are sufficiently close to $f$ in the uniform topology on $[0,1]^2$. It then remains to show that the probability that $\sup_{z\in [0,1]^2} |\Psi(z) - f(z)|<\eps$ is strictly positive for all $\eps>0$. This, in turn, would follow if we could show that the event $\sup_{z\in [0,1]^2} |G_1(z) - f(z)|<\eps$ is of positive probability (since the same argument could then be applied to show that $\sup_{z\in [0,1]^2} |G_2(z) - 1| < \eps$ with positive probability). To prove that $G_1$ can approximate $f$ arbitrarily well with positive probability, recall the Taylor series of $G_1$ given in~\eqref{eq:G GAF def}, note that any finite number $N$ of coefficients of this series can approximate the corresponding coefficients of $f$ arbitrarily well, and the probability that the tail of the series is smaller than $\eps/2$ on $[0,1]^2$ is positive. Altogether, this proves that $\cns>0$.

\end{proof}

%Remark: Probably we don't need what follows. ???
%For technical reasons, we need a similar result for $\delta$-large components. A component of the nodal set $\Gamma = \{z\in\C\colon |\Psi(z)|=1\}$ is called $\delta$-small if it is adjacent to a nodal domain of volume $<\delta$ and $\delta$-large, otherwise. Write $\nod_{\delta}(\Gamma;R)$ for the number of $\delta$-large connected components completely contained in  the open disk $\{z\in \C: |z-x|<R\}$.

%\begin{theorem}\label{theo:cns_existence_eps}
%For every $\delta>0$ there is a constant $\cns(\delta)>0$ such that
%$$
%\frac {\nod_{\delta}(\Gamma;R)}{\pi R^2}  \toas \cns(\delta).
%$$
%\end{theorem}
%\begin{proof}
%The same superadditivity argument as in the proof of Theorem~\ref{theo:cns_existence} applies.
%\end{proof}

%Remark 1: at the very end we shall need the statement $\cns(\delta) \to \cns$ as $\delta\downarrow 0$.

%Remark 2: Do we need to estimate the number of critical points? A possible way to overcome this would be the following. Prove first Theorem~\ref{theo:cns_existence_eps} (where the expectation of the number of $\delta$-large components is estimated trivially), then prove that $\cns(\delta)\to 0$. These two statements yield Theorem~\ref{theo:cns_existence}, but only in the sense of expectation. However, it would be very nice to have a.s.\ convergence. Open question: How to prove a.s.\ convergence without counting critical points?

\section{Proof of Theorem~\ref{theo:main}}

\subsection{Functional limit theorem}
Our aim is to prove a functional limit theorem which states that, for large $n$, the restriction of the function $\Psi_n(x)$ to the spherical cap $\bar B(x_0; R/\sqrt n)$ looks similarly to the restriction of the function $\Psi(z/2)$ to the disc $\bar D(0;R)$. Since these functions are defined on different domains, we have to construct a suitable map between these domains.   By the rotational invariance of $\Psi_n$, we may and will assume that $x_0=(0,0,-1)$ is the south pole of $\bS^2$. It is easy to check that the map
\begin{equation}\label{eq:map_cap_disk}
z\mapsto \zeta^{-1}\left(\frac{z\sin (R/\sqrt n)}{R(1+ \cos(R/\sqrt n))}\right)
\end{equation}
defines a bijection between $\bar D(0;R)$ and the spherical cap $\bar B(x_0; R/\sqrt n)$.
To define the functional space on which our weak convergence takes place, let $\bA(\bar D(0;R) \to \bS^2)$ be the space of functions with values in $\bS^2$ that are continuous on the closed disc $\bar D(0;R)$ and holomorphic in its interior. Endowed with the supremum metric, $\bA(\bar D(0;R) \to \bS^2)$ becomes a Polish space. Here, the unit sphere $\bS^2$ is endowed with the usual geodesic metric.

\begin{proposition}\label{prop:functional_CLT}
For every $R>0$, on the space $\bA(\bar D(0;R) \to \bS^2)$ we have the weak convergence
$$
%\left(\proj^{-1}\left(\frac{p_n(z/\sqrt{n})}{q_n(z/\sqrt{n})}\right)\right)_{z\in \bar D(0;R)} \to
%\left(\proj^{-1}\Psi(z)\right)_{z\in \bar D(0;R)},
\left((\zeta^{-1} \circ\Psi_n \circ\zeta^{-1})\left(\frac{z\sin (R/\sqrt n)}{R(1+ \cos(R/\sqrt n))}\right)\right)_{z\in \bar D(0;R)} \toweak ((\zeta^{-1}\circ\Psi)(z/2))_{z\in \bar D(0;R)},
$$
where $\Psi$ is the random meromorphic function defined in \eqref{eq:Psi def}.
\end{proposition}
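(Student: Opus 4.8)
The plan is to lift the statement from the (non-Gaussian) meromorphic functions to the (Gaussian) pair consisting of numerator and denominator, prove a Gaussian functional limit theorem there, and then push it forward through the division map. First I would write $\alpha_n := \sin(R/\sqrt n)/\bigl(R(1+\cos(R/\sqrt n))\bigr)$, so that $\alpha_n\sim 1/(2\sqrt n)$ and $n\alpha_n^2\to 1/4$ as $n\to\infty$, and the change of variables in~\eqref{eq:map_cap_disk} becomes simply $z\mapsto \zeta^{-1}(\alpha_n z)$. Since $\Psi_n(\zeta^{-1}(w))=p_n(w)/q_n(w)$ for $w\in\C$, the random field on the left of the proposition equals $z\mapsto\zeta^{-1}\bigl(P_n(z)/Q_n(z)\bigr)$, where $P_n(z):=p_n(\alpha_n z)$ and $Q_n(z):=q_n(\alpha_n z)$, while the right-hand side is $z\mapsto\zeta^{-1}\bigl(G_1(z/2)/G_2(z/2)\bigr)$. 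Introducing the division map $\Phi(f,g):=\zeta^{-1}(f/g)$, defined on the (open) set $U$ of pairs of $\C$-valued elements of $\bA(\bar D(0;R)\to\C)$ that never vanish simultaneously, and noting $\Phi(U)\subset\bA(\bar D(0;R)\to\bS^2)$, the assertion becomes $\Phi(P_n,Q_n)\toweak\Phi\bigl(G_1(\cdot/2),G_2(\cdot/2)\bigr)$.

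The first substantive step is the Gaussian convergence $(P_n,Q_n)\toweak\bigl(G_1(\cdot/2),G_2(\cdot/2)\bigr)$ in $\bA(\bar D(0;R)\to\C)^2$. Here $P_n$ and $Q_n$ are independent, centred, circularly symmetric complex Gaussian fields (the pseudo-covariance $\E[P_n(z)P_n(w)]$ vanishes because $\E[\xi_k^2]=0$), so the joint law of $(P_n,Q_n)$ is governed entirely by the Hermitian covariances $\E[P_n(z)\overline{P_n(w)}]=\E[Q_n(z)\overline{Q_n(w)}]=(1+\alpha_n^2 z\bar w)^n$, which, since $n\alpha_n^2\to 1/4$, converge uniformly for $z,w\in\bar D(0;R)$ to $\eee^{z\bar w/4}=\E[G(z/2)\overline{G(w/2)}]$; this yields convergence of all finite-dimensional distributions to those of the independent pair $\bigl(G_1(\cdot/2),G_2(\cdot/2)\bigr)$. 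For tightness, I would use that $P_n$ is a polynomial, hence entire, and that $\E[|P_n(z)|^2]=(1+\alpha_n^2|z|^2)^n\le\eee^{C|z|^2}$ uniformly in large $n$; Cauchy's integral formula on a fixed circle of radius $2R$ then bounds $\E\bigl[\sup_{|z|\le R}|P_n(z)|\bigr]$ and $\E\bigl[\sup_{|z|\le R}|P_n'(z)|\bigr]$ uniformly in $n$, whence the usual Arzel\`{a}--Ascoli/Prokhorov argument gives tightness of $(P_n)$, and likewise of $(Q_n)$, in $\bA(\bar D(0;R)\to\C)$; joint tightness of the pair follows from independence. Together with the finite-dimensional convergence this proves the claim.

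It remains to transfer this convergence through $\Phi$ using the continuous mapping theorem, for which one needs $\Phi$ to be continuous off a set that is null under the limit law. Using the chordal metric on $\bS^2\cong\C\cup\{\infty\}$, the distance between $\zeta^{-1}(f_1/g_1)(z)$ and $\zeta^{-1}(f_2/g_2)(z)$ is comparable to $|f_1 g_2 - f_2 g_1|/\sqrt{(|f_1|^2+|g_1|^2)(|f_2|^2+|g_2|^2)}$ at $z$; since for a pair $(f,g)\in U$ the quantity $|f|^2+|g|^2$ is bounded below on the compact $\bar D(0;R)$, one checks readily that $\Phi$ is continuous at every point of $U$, and that for $\bigl(G_1(\cdot/2),G_2(\cdot/2)\bigr)$ — and, for the same reason, for each $(P_n,Q_n)$ — the pair lies in $U$ almost surely: $G_1$ and $G_2$ being independent, conditionally on $G_1$ its zero set in $\bar D(0;R)$ is a.s.\ finite, and $G_2$ a.s.\ does not vanish at any fixed point, so $(G_1(z/2),G_2(z/2))\ne(0,0)$ for all $z$ a.s. The continuous mapping theorem then delivers $\Phi(P_n,Q_n)\toweak\Phi\bigl(G_1(\cdot/2),G_2(\cdot/2)\bigr)$, which is the statement. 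The one genuinely delicate point in this scheme is the singularity of the map $(f,g)\mapsto f/g$ at common zeros of numerator and denominator; working with $\bS^2$-valued functions and the chordal metric confines it to this a.s.\ empty set, and the remaining work — principally the Gaussian functional limit theorem of the second step — is routine.
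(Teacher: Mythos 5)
Your proposal follows the same strategy as the paper: lift to the jointly Gaussian pair $(p_n,q_n)$ (scaled), prove a Gaussian functional CLT for the pair, then push forward through the division map $\zeta^{-1}\circ(f_1/f_2)$ via the continuous mapping theorem, noting the map is continuous off the set of pairs with common zeros (or identically vanishing denominator) and that this set is null for the limit law. The only meaningful difference is the tightness step: the paper invokes the criterion in Shirai (\cite[Remark on p. 341]{shirai}), which reduces tightness for complex Gaussian analytic fields to a uniform second moment bound $\sup_n\sup_{z\in\bar D(0;3R)}\E[|p_n(\cdot)|^2]<\infty$, while you reconstruct it by hand via Cauchy/mean-value bounds and Arzel\`a--Ascoli. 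Your version works but is slightly looser as written -- Cauchy's integral formula controls $|P_n(z)|$ and $|P_n'(z)|$ by $\sup_{|w|=2R}|P_n(w)|$, so you still need an additional step (sub-mean-value property of $|P_n|$, or Jensen plus $L^2$ bounds) to bound $\E\bigl[\sup_{|w|=2R}|P_n(w)|\bigr]$; this is easily filled in but should be made explicit.
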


To prove the proposition, we need first to show the weak convergence of the random polynomial $p_n$ to the Gaussian entire function $G$.
Let $\bA(\bar D(0;R) \to \C)$ be the space of functions with values in $\C$ that are continuous on the closed disc $\bar D(0;R)$ and holomorphic in its interior. Endowed with the supremum norm, it becomes a separable Banach space.
\begin{lemma}\label{lem:functional_CLT_GAF}
For every $R>0$, weakly on the space $\bA(\bar D(0;2R) \to \C )$, we have
\begin{equation}\label{eq:lem:functional_CLT_GAF}
\left(p_n\left(\frac{z\sin (R/\sqrt n)}{R(1+ \cos(R/\sqrt n))}\right)\right)_{z\in \bar D(0;2R)} \toweak (G(z/2))_{z\in \bar D(0;2R)},
\end{equation}
where $G$ is a Gaussian analytic function as in~\eqref{eq:G GAF def}.
\end{lemma}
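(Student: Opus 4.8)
The plan is to establish weak convergence in the Banach space $\bA(\bar D(0;2R)\to\C)$ by the standard two-step scheme: (i) identify the scaling limit of the covariance kernel and deduce convergence of finite-dimensional distributions, and (ii) prove tightness. Write $a_n := \sin(R/\sqrt n)/\bigl(R(1+\cos(R/\sqrt n))\bigr)$, so that the left-hand side is $\bigl(p_n(a_n z)\bigr)_{z\in\bar D(0;2R)}$; a Taylor expansion gives $a_n = \tfrac{1}{2\sqrt n}\bigl(1 + O(1/n)\bigr)$, hence $\sqrt n\, a_n \to 1/2$. Since $p_n$ and $G$ are both centred complex Gaussian fields with vanishing pseudo-covariance ($\E[p_n(z)p_n(w)]=0$, $\E[G(z)G(w)]=0$), convergence of finite-dimensional distributions reduces to the pointwise convergence of the Hermitian covariance kernels. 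Using the explicit kernel of the spherical polynomial, $\E[p_n(z)\overline{p_n(w)}] = \sum_{k=0}^n\binom nk (z\bar w)^k = (1+z\bar w)^n$, we compute for fixed $z,w$
\begin{equation*}
\E\bigl[p_n(a_n z)\,\overline{p_n(a_n w)}\bigr] = \bigl(1 + a_n^2\, z\bar w\bigr)^n = \Bigl(1 + \tfrac{z\bar w}{4n}(1+o(1))\Bigr)^n \longrightarrow \eee^{z\bar w/4} = r_G(z/2,w/2),
\end{equation*}
which is exactly the covariance of $(G(z/2))_{z}$ by \eqref{eq:rG covar GEF def}. This yields the convergence of all finite-dimensional distributions.

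For tightness in $\bA(\bar D(0;2R)\to\C)$, I would use the holomorphy of the functions involved. A convenient route is to bound second moments uniformly: for every $z$ with $|z|\le 2R$ and all $n$ large, $\E\bigl[|p_n(a_n z)|^2\bigr] = (1+a_n^2|z|^2)^n \le \exp(a_n^2|z|^2 n) \le \exp(|z|^2/4 + o(1)) \le C_R$. By the Cauchy integral formula applied on a slightly larger disc (say of radius $3R$, on which the same uniform $L^2$-bound holds after enlarging the Taylor-expansion constant), this $L^2$-bound on function values transfers to a uniform $L^2$-bound on derivatives on $\bar D(0;2R)$, and hence, via Gaussian hypercontractivity (all $L^p$ norms of a fixed Gaussian are comparable), to a uniform bound $\sup_n \E\bigl[\sup_{z\in\bar D(0;2R)}|p_n'(a_n z)|^p\bigr] < \infty$ for any $p>2$. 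Equicontinuity of the family in probability then follows from the mean value inequality $|f(z)-f(w)| \le |z-w|\sup|f'|$, and since the functions are uniformly bounded in probability at a single point, Prokhorov's theorem in the Polish space $\bA(\bar D(0;2R)\to\C)$ gives tightness. Combined with the convergence of finite-dimensional distributions, this proves \eqref{eq:lem:functional_CLT_GAF}.

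The routine parts are the kernel computation and the moment estimates; the one point requiring a little care is the tightness argument, since one must make sure the Cauchy-estimate transfer of bounds from values to derivatives is carried out on a disc strictly containing $\bar D(0;2R)$ and that the $L^2$-bound genuinely holds uniformly in $n$ there — this is why I set up the lemma on $\bar D(0;2R)$ rather than $\bar D(0;R)$, leaving room to pass to $\bar D(0;3R)$. I do not anticipate a serious obstacle: the only subtlety is bookkeeping the nested radii and invoking Gaussian hypercontributivity to upgrade $L^2$ control to the $L^p$ control needed for the standard tightness criterion in spaces of continuous functions.
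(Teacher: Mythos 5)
Your proof follows the same strategy as the paper's: convergence of finite-dimensional distributions from the covariance computation (your computation of $(1+a_n^2 z\bar w)^n\to\eee^{z\bar w/4}$ is exactly the paper's~\eqref{eq:p_n_covar_z_sqrt_n}), and tightness from a uniform second-moment bound on a slightly larger disc. The only structural difference is that the paper invokes a ready-made criterion (Shirai's remark) stating that $\sup_n\sup_{|z|\le 3R}\E\bigl[|p_n(a_n z)|^2\bigr]<\infty$ already implies tightness in the space of holomorphic functions, whereas you sketch the underlying argument from scratch; that is a legitimate and more self-contained route.

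There is, however, a logical jump in your tightness sketch. Hypercontractivity (equivalently, comparability of Gaussian $L^p$ norms) upgrades the \emph{pointwise} bound $\sup_z\E\bigl[|p_n'(a_n z)|^2\bigr]<\infty$ to $\sup_z\E\bigl[|p_n'(a_n z)|^p\bigr]<\infty$, but it does \emph{not} by itself give the bound $\sup_n\E\bigl[\sup_{z\in\bar D(0;2R)}|p_n'(a_n z)|^p\bigr]<\infty$ that you then feed into the equicontinuity step: moving the supremum inside the expectation is a separate matter. Two standard repairs: (i) apply the Cauchy integral formula once more to dominate $\sup_{|z|\le 2R}|p_n'(a_n z)|$ by a fixed contour integral of $|p_n(a_n\cdot)|$ over $|w|=5R/2$, then use Jensen together with pointwise Gaussian moments; or (ii) skip the supremum and use a Kolmogorov--Chentsov criterion: the pointwise $L^2$ bound on $f_n'$ gives $\E\bigl[|f_n(z)-f_n(w)|^2\bigr]\le C|z-w|^2$, hence $\E\bigl[|f_n(z)-f_n(w)|^4\bigr]\le 2C^2|z-w|^4$ by Gaussianity of the increment, which is a modulus of continuity of order $|z-w|^{2+2}$ in the plane and suffices for tightness. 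With either patch the proof is complete and agrees with the paper's.
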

\begin{proof}
By the definition of $p_n$, see~\eqref{eq:def_p_n}, we have
$$
\E\left[p_n(z) \overline{p_n(w)}\right] = \sum_{k=0}^n \binom nk z^k \bar w^k = (1+z\bar w)^n
$$
for arbitrary $z,w\in \C$. It follows that
\begin{equation}\label{eq:p_n_covar_z_sqrt_n}
\begin{split}
&\E\left[p_n\left(\frac{z\sin (R/\sqrt n)}{R(1+ \cos(R/\sqrt n))}\right) \overline{p_n\left(\frac{w\sin (R/\sqrt n)}{R(1+ \cos(R/\sqrt n))}\right)}\right]
\\&=
\left(1 +  \frac{z\bar w \sin^2 (R/\sqrt n)}{R^2(1+ \cos(R/\sqrt n))^2} \right)^n
=
\left(1 +  \frac{z\bar w (R^2/n) (1+o(1))}{R^2 (4+o(1))} \right)^n
\ton
\eee^{z\bar w/4}.
\end{split}
\end{equation}
Given that the processes $p_n$ and $G$ are complex Gaussian and centred, this already implies the convergence of the finite-dimensional distributions. To prove that the sequence of processes on the l.h.s.\ of~\eqref{eq:lem:functional_CLT_GAF} is tight on $\bA(\bar D(0;2R) \to \C )$, it suffices to check that
$$
\sup_{z\in \bar D(0;3R)} \sup_{n\in\N} \E\left[\left| p_n\left(\frac{z\sin (R/\sqrt n)}{R(1+ \cos(R/\sqrt n))}\right)\right|^2\right] <\infty;
$$
see~\cite[Remark on p. 341]{shirai}. But this is an easy consequence of~\eqref{eq:p_n_covar_z_sqrt_n}.
\end{proof}
\begin{remark}
Almost the same proof yields that weakly on $\bA(\bar D(0;R) \to \C )$,
$$
\left(p_n\left(\frac z{\sqrt n}\right)\right)_{z\in \bar D(0;R)} \toweak (G(z))_{z\in \bar D(0;R)}.
$$
\end{remark}

%\begin{lemma}
%We have
%$$%
%$$
%\end{lemma}
\begin{proof}[Proof of Proposition \ref{prop:functional_CLT}]
It follows from Lemma~\ref{lem:functional_CLT_GAF} and~\cite[Theorem~2.8]{billingsley} that the following weak convergence takes place on  the Cartesian square $\bA(\bar D(0;2R) \to \C )^2$:
$$
\left(p_n\left(\frac{z\sin (R/\sqrt n)}{R(1+ \cos(R/\sqrt n))}\right),q_n\left(\frac{z\sin (R/\sqrt n)}{R(1+ \cos(R/\sqrt n))}\right) \right)_{z\in \bar D(0;2R)}
\toweak
(G_1(z/2), G_2(z/2))_{z\in \bar D(0;2R)}.
$$
Consider a map
$$
S: \bA(\bar D(0;2R) \to \C )^2\to \bA(\bar D(0;R) \to \bS^2),
\quad
(f_1,f_2) \mapsto \zeta^{-1} \circ (f_1/f_2).
$$
It is well-defined and continuous outside the closed set
\begin{multline*}
\mathcal Z
:=
\{(f_1,f_2)\in \bA(\bar D(0;2R) \to \C )^2 \colon f_1(z)= f_2(z)=0 \text{ for some } z\in \bar D(0;R)\}
\\
\,
\cup
\,
\{(f_1,0)\colon f_1 \in \bA(\bar D(0;2R) \to \C )\}.
\end{multline*}
For $(f_1,f_2)\in \mathcal Z$ we may define, say, $S(f_1,f_2) =0$. We have (by conditioning on each realization of $G_1$ and applying Fubini's theorem)
$$
\P[(G_1 (\cdot/2),G_2(\cdot/2)) \in \mathcal Z] = 0.
$$
Hence, the Continuous Mapping Theorem~\cite[Theorem~2.7]{billingsley} implies that
$$
\left(\left(\zeta^{-1} \circ \frac {p_n}{q_n}\right)\left(\frac{z\sin (R/\sqrt n)}{R(1+ \cos(R/\sqrt n))}\right)\right)_{z\in \bar D(0;R)} \toweak \left(\zeta^{-1}\left(\frac{G_1(z/2)}{G_2(z/2)}\right)\right)_{z\in \bar D(0;R)}.
$$
This completes the proof after recalling that $\Psi_n = (p_n/q_n)\circ \zeta$.
\end{proof}

\subsection{Local convergence}
Denote by $\nod(\Gamma_n;x_0,R)$ the number of  connected components of $\Gamma_n= \{x\in\bS^2\colon |\Psi_n(x)| = 1\}$ completely contained in the open spherical cap $B(x_0;R)$. Recall also that $\nod(\Gamma;R)$ is the number of connected components of the set $\Gamma = \{z\in\C\colon |\Psi(z)| = 1\}$ contained in the open disc $\{z\in\C\colon |z|<R\}$.
\begin{lemma}\label{lem:expect_local_conv}
For every fixed $R>0$ we have
$$
%\lim_{n\to\infty} \E[ \nod_{\delta/n}(\Gamma_n; x_0, R/\sqrt n)] = \E [\nod_{\delta} (\Gamma; R)].
%\lim_{\delta\downarrow 0}\limsup_{n\to\infty} |\E[ \nod_{\delta/n}(\Gamma_n; x_0, R/\sqrt n)] - \E [\nod_{\delta} (\Gamma; R)]| = 0.
\lim_{n\to\infty} \E[ \nod(\Gamma_n; x_0, R/\sqrt n)] = \E [\nod (\Gamma; R/2)].
$$
\end{lemma}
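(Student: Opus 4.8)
The plan is to transfer the problem to a fixed disc by means of the bijection~\eqref{eq:map_cap_disk}, to pass from weak convergence of the rescaled fields to an a.s.\ convergence of component counts, and then to upgrade this to convergence of expectations by a uniform integrability argument. Taking $x_0$ to be the south pole, the map~\eqref{eq:map_cap_disk}, which is a bijection of $\bar D(0;R)$ onto $\bar B(x_0;R/\sqrt n)$ and which we extend to the slightly larger disc $\bar D(0;\tfrac32 R)$, turns $\Psi_n$ into a field $\widetilde\Psi_n$ on $\bar D(0;\tfrac32 R)$ whose lemniscate $\widetilde\Gamma_n=\{z:|\widetilde\Psi_n(z)|=1\}$ satisfies $\nod(\Gamma_n;x_0,R/\sqrt n)=\#\{\text{components of }\widetilde\Gamma_n\text{ contained in }D(0;R)\}$, while the limiting field $\Psi(\cdot/2)$ of Proposition~\ref{prop:functional_CLT} has lemniscate $2\Gamma$ with $\nod(2\Gamma;D(0;R))=\nod(\Gamma;R/2)$. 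Proposition~\ref{prop:functional_CLT} in fact holds verbatim on $\bar D(0;\tfrac32 R)$ in place of $\bar D(0;R)$ — its proof only uses the convergence on $\bar D(0;2R)$ already supplied by Lemma~\ref{lem:functional_CLT_GAF} — so, invoking Skorokhod's representation theorem and Cauchy's estimates, I may assume I am on a probability space on which $\widetilde\Psi_n\to\Psi(\cdot/2)$ in $C^1(\bar D(0;\tfrac32 R))$ a.s., and simultaneously $\widetilde p_n\to G_1(\cdot/2)$ and $\widetilde q_n\to G_2(\cdot/2)$ uniformly on $\bar D(0;\tfrac32 R)$ a.s.

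Next I would prove that $\nod(\Gamma_n;x_0,R/\sqrt n)\to\nod(\Gamma;R/2)$ a.s.\ on this space. The key input is a non-degeneracy statement for the limit: a.s.\ the lemniscate $2\Gamma$ has no singular points in $\bar D(0;\tfrac32 R)$, it meets the circle $\{|z|=R\}$ transversally, and $G_2$ has no zero on that circle; all three are of Bulinskaya type and follow from the non-degeneracy of the Gaussian jets $(G_i(z),G_i'(z))$ by a standard first-moment argument. Granting this, $2\Gamma\cap\bar D(0;\tfrac32 R)$ is a finite disjoint union of smooth arcs and closed curves along which $\nabla(|\Psi(\cdot/2)|^2-1)$ does not vanish, and $C^1$-convergence then forces, for all large $n$: (i) $\widetilde\Gamma_n\cap\bar D(0;\tfrac32 R)$ is confined to a thin tubular neighbourhood of $2\Gamma$, since off that neighbourhood $\bigl|\,|\Psi(\cdot/2)|-1\,\bigr|$ is bounded below (working with $1/\Psi$ near the poles of the limit), hence so is $\bigl|\,|\widetilde\Psi_n|-1\,\bigr|$; and (ii) inside the neighbourhood $\widetilde\Gamma_n$ is a $C^1$-graph over $2\Gamma$ by the implicit function theorem. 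This sets up a count-preserving correspondence between the components of $\widetilde\Gamma_n$ and of $2\Gamma$ in $\bar D(0;\tfrac32 R)$ which, by transversality to $\{|z|=R\}$, preserves the property of being a closed curve inside $D(0;R)$; hence $\nod(\Gamma_n;x_0,R/\sqrt n)=\nod(\Gamma;R/2)$ for all sufficiently large $n$, a.s.

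To pass to expectations it remains to establish uniform integrability of $\{\nod(\Gamma_n;x_0,R/\sqrt n)\}_n$. By the maximum-modulus argument already used to verify condition (c) in the proof of Theorem~\ref{theo:cns_existence}, each component of $\Gamma_n$ inside $B(x_0;R/\sqrt n)$ encloses a distinct zero or pole of $\Psi_n$, so $\nod(\Gamma_n;x_0,R/\sqrt n)\le Z_n$, where $Z_n$ counts the zeros of $p_n$ plus the zeros of $q_n$ inside $B(x_0;R/\sqrt n)$. Since the zero sets of $p_n$ and $q_n$, viewed on $\bS^2$, are $\text{SO}(3)$-invariant point processes of $n$ points each, they have uniform first intensity, whence $\E[Z_n]=n(1-\cos(R/\sqrt n))\to R^2/2$; on the other hand, by Hurwitz's theorem applied to the a.s.\ uniform convergence $\widetilde p_n\to G_1(\cdot/2)$, $\widetilde q_n\to G_2(\cdot/2)$ (using that a GEF a.s.\ has no zero on a prescribed circle) one gets $Z_n\to Z$ a.s., where $Z$ counts the zeros and poles of $\Psi$ in $D(0;R/2)$ and $\E[Z]=R^2/2$ because the zeros of a GEF have intensity $1/\pi$. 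Thus $Z_n\to Z$ in $L^1$, so $\{Z_n\}$, and a fortiori $\{\nod(\Gamma_n;x_0,R/\sqrt n)\}$, is uniformly integrable; combined with the a.s.\ convergence of the previous paragraph, this yields $\E[\nod(\Gamma_n;x_0,R/\sqrt n)]\to\E[\nod(\Gamma;R/2)]<\infty$, as required.

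The main obstacle is the exact-stability step: upgrading a.s.\ uniform convergence of the fields to an \emph{equality} of component counts for large $n$. It rests on the automatic $C^0\!\to\!C^1$ improvement (free here by holomorphy and Cauchy's estimates), on the non-degeneracy of $2\Gamma$ in the \emph{closed} disc and its transversality to the bounding circle — a point that genuinely must be checked from the non-degeneracy of the Gaussian field and its first derivatives — and, most delicately, on the exclusion of spurious small components of $\widetilde\Gamma_n$ not shadowing $2\Gamma$, which is exactly what the lower bound on $\bigl|\,|\widetilde\Psi_n|-1\,\bigr|$ away from the tubular neighbourhood rules out. Should one prefer to avoid this precise stability analysis, the same conclusion can be reached more robustly by first truncating to $\delta$-large components — of which there are deterministically at most $\pi R^2/\delta$ inside the cap, their enclosed nodal domains being pairwise disjoint and of spherical area $\ge\delta/n$ — applying bounded convergence to the truncated counts, the uniform small-component bounds of Propositions~\ref{prop:sm do small} and~\ref{prop:sm do small_finite_deg} to the remainder, and finally letting $\delta\to0$.
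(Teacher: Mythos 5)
Your proposal is correct, and the first half (establishing convergence of the component counts) is the same idea as the paper's, just packaged differently. The paper defines the component-count functional $N$ on $\bA(\bar D(0;2R)\to\bS^2)$, shows it is continuous off the event $\mathcal Y=\mathcal Y_1\cup\mathcal Y_2$ of singular or non-transversal lemniscates, uses Lemmas~\ref{lem:gamma_nonsing} and~\ref{lem:non_tangent} (the Bulinskaya-type statements you anticipate) to get $\P[\zeta^{-1}\circ\Psi\in\mathcal Y]=0$, and then applies the continuous mapping theorem to Proposition~\ref{prop:functional_CLT}. Your Skorokhod-plus-tubular-neighbourhood argument is a hands-on re-derivation of exactly this: it is sound (note that the $C^0\!\to\!C^1$ upgrade by Cauchy estimates and the need to pass to $1/\Psi$ near poles are precisely the points the paper compresses into ``the map $N$ is locally constant off $\mathcal Y$''), but it is also longer than needed; the mapping-theorem route avoids Skorokhod altogether and only requires distributional convergence.

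Where you genuinely diverge from the paper is in the uniform-integrability step, and your route is a real alternative. The paper decomposes $\nod(\Gamma_n;x_0,R/\sqrt n)$ into $\delta/n$-large components (deterministically $\le c R^2/\delta$ by the area bound) plus $\delta/n$-small components, and controls the latter by Proposition~\ref{prop:sm do small_finite_deg}. You instead bound $\nod(\Gamma_n;x_0,R/\sqrt n)\le Z_n$, the number of zeros and poles of $\Psi_n$ in the cap (via the same maximum-modulus injection $\gamma\mapsto G_\gamma$ the paper uses to verify condition (c) in Theorem~\ref{theo:cns_existence}), then observe that $Z_n\to Z$ a.s.\ by Hurwitz and $\E[Z_n]=n(1-\cos(R/\sqrt n))\to R^2/2=\E[Z]$ by the $\text{SO}(3)$-invariance of the zero process and the $1/\pi$ intensity of GEF zeros, so that Scheff\'e gives $Z_n\to Z$ in $L^1$ and hence uniform integrability of the dominated family. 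This is correct, and it is arguably cleaner for this particular lemma: it is elementary, self-contained, and does not invoke the small-component machinery of Section~\ref{sec:small_components} at all. What the paper's route buys instead is that Proposition~\ref{prop:sm do small_finite_deg} is developed in full generality anyway (it is a Nazarov--Sodin-type estimate that is of independent interest and underlies the whole non-Gaussian analysis), so reusing it here costs nothing and keeps a uniform framework; but your zeros-and-poles argument would let one prove Lemma~\ref{lem:expect_local_conv} without proving Proposition~\ref{prop:sm do small_finite_deg} first. You also correctly identify the paper's actual route as a fallback in your closing paragraph.

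Two small points to tighten: (i) when you claim Proposition~\ref{prop:functional_CLT} ``holds verbatim on $\bar D(0;\tfrac32 R)$'', be explicit that you keep the same rescaling map (with parameter $R$, not $\tfrac32 R$) and simply enlarge the domain, which works because Lemma~\ref{lem:functional_CLT_GAF} gives covariance convergence on any fixed compact; (ii) for $Z_n\to Z$ a.s.\ you need a.s.\ no zero of $G_1(\cdot/2)$ or $G_2(\cdot/2)$ on $\partial D(0;R)$, which you do mention, but it is worth noting this is exactly the same type of ``first-intensity absolutely continuous, circle has measure zero'' argument as Lemma~\ref{lem:non_tangent}, so no new input is required.
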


For the proof, we need the following two lemmas.  In the case of stationary Gaussian processes, analogous statements were obtained in~\cite[Section~5.3]{nazarov_sodin}.
%Recall that $\Gamma = \{z\in \C\colon |\Psi(z)|=1\}$.
\begin{lemma}\label{lem:gamma_nonsing}
With probability one, the connected components of $\Gamma$ are non-singular curves.
%that is
%$$
%\P[\text{there is } z\in \C \text{ such that } |\Psi(z)|^2 = 1, \nabla |\Psi(z)|^2=0] = 0.
%$$
\end{lemma}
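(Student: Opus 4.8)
The plan is to reduce the statement to the almost sure absence of ``singular points'' of $\Gamma$ and then to rule these out by a Bulinskaya-type argument, the twist being that the relevant random field is not Gaussian.

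Write $h(z):=\log|\Psi(z)|=\log|G_1(z)|-\log|G_2(z)|$. On $\Gamma$ we have $|\Psi(z)|=1$, so $\Psi(z)\notin\{0,\infty\}$; thus $\Gamma=h^{-1}(0)$ lies inside the open, full-measure set where $G_1G_2\neq 0$, on which $h$ is real-analytic. By the implicit function theorem it suffices to prove that, almost surely, $\nabla h\neq 0$ at every point of $\Gamma$ (this makes $\Gamma$ a one-dimensional real-analytic embedded submanifold of $\C$, so each connected component is a non-singular curve). By the Cauchy--Riemann equations $\partial_z h=\tfrac12\,\Psi'/\Psi$, hence for $z_0\in\Gamma$ one has $\nabla h(z_0)=0$ iff $\Psi'(z_0)=0$, and since $G_2(z_0)\neq 0$ this is equivalent to $W(z_0)=0$ for the entire function $W:=G_1'G_2-G_1G_2'$. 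Consequently every singular point of $\Gamma$ belongs to the zero set of the real-analytic map
$$
F:\C\to\R^3,\qquad F(z):=\bigl(\Re W(z),\ \Im W(z),\ |G_1(z)|^2-|G_2(z)|^2\bigr),
$$
and it is enough to show $\P\bigl[F^{-1}(0)\cap\bar D(0;R)\neq\emptyset\bigr]=0$ for every $R\in\N$, and then take a union over $R$.

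Fix $R$ and a large constant $K$. Since $F$ and $DF$ are polynomial expressions in $G_1,G_2$ and their first two derivatives, which have finite moments of all orders on $\bar D(0;R)$, the event $\Omega_K:=\{\sup_{\bar D(0;R)}\|DF\|\le K\}$ has probability tending to $1$ as $K\to\infty$. Cover $\bar D(0;R)$ by at most $C_R\delta^{-2}$ squares of side $\delta$; on $\Omega_K$, if $F$ vanishes somewhere in such a square $Q$ then $|F(z_Q)|\le K\delta$ at the centre $z_Q$. Hence
$$
\P\bigl[F^{-1}(0)\cap\bar D(0;R)\neq\emptyset,\ \Omega_K\bigr]\ \le\ \sum_Q\P\bigl[|F(z_Q)|\le K\delta\bigr]\ \le\ C_R\,\delta^{-2}\sup_{|z|\le R}\P\bigl[|F(z)|\le K\delta\bigr],
$$
so everything comes down to the small-ball estimate $\sup_{|z|\le R}\P[|F(z)|\le\eps]=o(\eps^{2})$ as $\eps\downarrow 0$; granting it, we let $\delta\to 0$ and then $K\to\infty$ to conclude, exactly as in the Gaussian argument of~\cite[Section~5.3]{nazarov_sodin}.

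The small-ball estimate is the only place where the non-Gaussianity of $F$ must be confronted, and it is the heart of the proof. Fix $|z|\le R$ and condition on $(G_1(z),G_2(z))=(\alpha,\beta)$. Since $W(z)=G_1'(z)\,\beta-\alpha\,G_2'(z)$ depends linearly on $\bigl(G_1'(z),G_2'(z)\bigr)$, and since $(G_1(z),G_1'(z))$ is independent of $(G_2(z),G_2'(z))$, the conditional law of $W(z)$ given $(\alpha,\beta)$ is a \emph{centred} complex Gaussian with variance $e^{|z|^2}\bigl(|\alpha|^2+|\beta|^2\bigr)$ --- the conditional means cancel because the regression coefficient of $G_j'(z)$ on $G_j(z)$ equals $\bar z$ for $j=1,2$. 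Therefore $\P\bigl[|W(z)|\le\eps\mid\alpha,\beta\bigr]\le\min\bigl(1,\ \eps^2 e^{-|z|^2}/(|\alpha|^2+|\beta|^2)\bigr)$. Now $U:=|G_1(z)|^2$ and $V:=|G_2(z)|^2$ are i.i.d.\ exponential with mean $e^{|z|^2}$; integrating the last bound against their joint density over the slab $|U-V|\le\eps$ dictated by the third coordinate of $F$, and splitting the integral according to the size of $U+V$ relative to $\eps$, a short computation gives $\P[|F(z)|\le\eps]\le C_R\,\eps^{3}\log(1/\eps)$ uniformly in $|z|\le R$. The mechanism is that the conditional variance of $W(z)$ is bounded below by $|G_1(z)|^2+|G_2(z)|^2$, which is forced to be small precisely when the third coordinate of $F$ is near its forbidden value $0$: the two constraints cooperate rather than compete, and the gained power of $\eps$ beats the $\delta^{-2}$ count of squares. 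The main obstacle is thus this interplay; once it is set up, the remaining estimates are routine.
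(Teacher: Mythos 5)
Your proof is correct, and the key computation at its heart checks out: the regression coefficient of $G'(z)$ on $G(z)$ for the covariance $\eee^{z\bar w}$ is indeed $\bar z$, so the conditional mean of $W(z)=G_1'(z)G_2(z)-G_1(z)G_2'(z)$ given $(G_1(z),G_2(z))=(\alpha,\beta)$ cancels and the conditional variance is $\eee^{|z|^2}(|\alpha|^2+|\beta|^2)$; integrating the resulting bound against the exponential laws of $|\alpha|^2,|\beta|^2$ over the slab $\bigl||\alpha|^2-|\beta|^2\bigr|\le\eps$ does give $O(\eps^3\log(1/\eps))$, which beats the $\delta^{-2}$ square count. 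However, your route is genuinely different from, and much heavier than, the paper's. The paper observes that $\Gamma$ is the zero set of the stationary $C^1$ field $H(z)=(|G_1(z)|^2-|G_2(z)|^2)\eee^{-|z|^2}$ and invokes its generalized Bulinskaya lemma (Lemma~\ref{lem:bulinskaya}), whose only probabilistic hypothesis is that the \emph{one-dimensional} density of $H(z)$ is bounded near $0$ uniformly on compacts; by stationarity this reduces to $H(0)$, the difference of two independent unit exponentials, with density $\tfrac12\eee^{-|x|}$. You instead reformulate singularity through the Wronskian and re-prove the Bulinskaya-type conclusion from scratch via a covering argument plus a three-dimensional joint small-ball estimate for $(\Re W,\Im W,|G_1|^2-|G_2|^2)$. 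What your approach buys is self-containedness and an explicit quantitative mechanism (the conditional variance of $W$ degenerates exactly where the third coordinate is small, and the two constraints still cooperate); what it costs is length, and it essentially duplicates the conditional-Gaussian computation the paper already records as Lemma~\ref{lem:cond dist a+bi}(1) for the small-components estimate. Two cosmetic points: the reduction via the implicit function theorem only shows that $\nabla h\neq 0$ on $\Gamma$ is \emph{sufficient} for non-singularity, which is all the lemma needs; and in the covering step you should bound $\min(1,\cdot)$ by the Gaussian term even on the region $|\alpha|^2+|\beta|^2\le\eps$ (as the Gamma$(2)$ density near $0$ makes that integral $O(\eps^3)$), since bounding it by $1$ there would only give $O(\eps^2)$ and the argument would not close.
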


\begin{lemma}\label{lem:non_tangent}
For every $R>0$, the probability that some of the components of $\Gamma$ is tangential to the circle $\{z\in\C\colon |z|=R\}$ is zero.
\end{lemma}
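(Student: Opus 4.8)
The plan is to fix $R>0$ and show that the "bad" event --- that some connected component of $\Gamma = \{|\Psi|=1\}$ is tangent to the circle $C_R := \{|z|=R\}$ --- is a null event. The natural strategy is to reduce tangency to the simultaneous vanishing of two quantities along $C_R$ and then to show that with probability one this cannot happen. Concretely, consider the real-analytic function $F(z) := |\Psi(z)|^2 - 1$ on $\C\setminus\{\text{poles of }\Psi\}$; its zero set is $\Gamma$. A point $z_0\in\Gamma\cap C_R$ is a point of tangency precisely when the level set $\{F=0\}$ and the circle $C_R$ have a common tangent line at $z_0$, i.e.\ when $\nabla F(z_0)$ is parallel to the outward normal of $C_R$ at $z_0$, equivalently when the tangential derivative $\partial_\theta\big(F(Re^{i\theta})\big)$ vanishes at the corresponding angle $\theta_0$. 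So tangency at $z_0 = Re^{i\theta_0}$ forces the \emph{two} equations
$$
F(Re^{i\theta_0}) = 0, \qquad \frac{\partial}{\partial \theta}\, F(Re^{i\theta})\Big|_{\theta=\theta_0} = 0
$$
to hold simultaneously (excluding the measure-zero possibility that $z_0$ is a pole of $\Psi$, which we discard up front). I would first argue that it suffices to rule out this pair of equations for all $\theta_0$, and in fact it suffices to rule it out for each fixed $\theta_0$ together with a countable/compactness argument, since both $g(\theta):=F(Re^{i\theta})$ and $g'(\theta)$ are real-analytic in $\theta$ on the (a.s.\ open dense) set where $\Psi$ has no pole: a real-analytic function on a circle either vanishes identically or has finitely many zeros, so it is enough to show (i) $g\not\equiv 0$ a.s., and (ii) $g$ and $g'$ have no common zero a.s.

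For step (i), $g\equiv 0$ would mean $|\Psi|\equiv 1$ on $C_R$; since $\Psi$ is meromorphic, by the maximum modulus principle (applied as in the proof of Theorem~\ref{theo:cns_existence}) this would force $\Psi$ to be constant, an event of probability zero. For step (ii), the key point is a non-degeneracy (absolute continuity) statement for the Gaussian vector underlying $\Psi$. Writing $\Psi = G_1/G_2$, the pair $(g(\theta),g'(\theta))$ is a smooth function of the finite-dimensional Gaussian vector $\big(G_1(Re^{i\theta}), G_2(Re^{i\theta}), G_1'(Re^{i\theta}), G_2'(Re^{i\theta})\big)\in\C^4$, whose covariance matrix is non-degenerate (the joint law of $(G_j(z),G_j'(z))$ for a GEF is non-degenerate for every fixed $z$, and $G_1,G_2$ are independent). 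Hence this Gaussian vector has a density on $\C^4\cong\R^8$. A direct computation expresses $(g(\theta),g'(\theta))$ explicitly in terms of these four complex values; one then checks that the map from $\R^8$ to $\R^2$ given by $(G_1,G_2,G_1',G_2')\mapsto (g,g')$ is a submersion off a lower-dimensional set (equivalently, that $\{g=g'=0\}$ pulls back to a set of Lebesgue measure zero in $\R^8$, away from the pole locus $G_2=0$), so $\P[g(\theta)=g'(\theta)=0]=0$ for each fixed $\theta$.

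To pass from "for each fixed $\theta$" to "for all $\theta$ simultaneously, a.s.", I would use a Fubini/Bulinskaya-type argument: let $N := \#\{\theta\in[0,2\pi): g(\theta)=g'(\theta)=0\}$; then $\E\,N$ can be bounded by a Kac--Rice-type integral $\int_0^{2\pi}\mathbb{E}\big[\,|g''(\theta)|\,\big|\,g(\theta)=g'(\theta)=0\big]\,p_{(g(\theta),g'(\theta))}(0,0)\,d\theta$, and since the joint density $p_{(g(\theta),g'(\theta))}$ at the origin vanishes (indeed, because the pair has a density on $\R^2$ with no atom at $0$ --- more precisely one checks the degeneracy above forces the relevant density factor to be well-defined and the conditional expectation finite, while the event has probability zero), one gets $\E\, N = 0$, hence $N=0$ a.s.; the event in the lemma is contained in $\{N\ge 1\}$. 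Alternatively, and perhaps more cleanly, one can avoid Kac--Rice entirely: cover $C_R$ by finitely many closed arcs $I_j$, and on each $I_j$ use real-analyticity of $g$ to note that $\{g=g'=0\}$ is finite unless $g\equiv 0$ on $I_j$; a tangency in $I_j$ thus produces, for \emph{some} rational $\theta\in I_j$ arbitrarily close by... —— this is slightly delicate, so I would instead simply invoke that the stationary-Gaussian analogue is proved in \cite[Section~5.3]{nazarov_sodin} and adapt: the only structural input needed is the non-degeneracy of the finite-dimensional Gaussian distributions of $(G_1,G_2)$ together with their derivatives, which holds here.

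\emph{Main obstacle.} The genuinely delicate point is step (ii) combined with the passage to "all $\theta$ at once": showing that the $\R^2$-valued random curve $\theta\mapsto(g(\theta),g'(\theta))$ a.s.\ never hits the origin. For a single $\theta$ this is pure Gaussian non-degeneracy, but the uniform-in-$\theta$ statement requires either a Kac--Rice bound on $\E\,N$ with a careful check that the conditional density at $0$ really is what makes the integral vanish, or a real-analyticity dichotomy argument that must handle the pole set $\{G_2=0\}$ (which meets $C_R$ only at finitely many points a.s., and those can be excised). Handling the interaction with poles --- and confirming that "$\Psi$ tangent to $C_R$ at a pole" is vacuous or null --- is the bookkeeping that needs the most care.
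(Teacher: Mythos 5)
Your broad strategy (parametrize the circle, reduce tangency to a common zero of a scalar function and its derivative, use Gaussian non-degeneracy) is the right one, but the execution has a genuine gap, and you explicitly flag it yourself as "the main obstacle": you never actually close the passage from a fixed $\theta_0$ to all $\theta$ simultaneously. The paper closes it with the Bulinskaya lemma in the form stated as Lemma~\ref{lem:bulinskaya}, and the point you miss is that this lemma requires much less than what you are trying to verify. Bulinskaya needs only (a) $C^1$ sample paths and (b) that the \emph{marginal} density of the scalar field $\xi(\theta)$ is bounded near $0$, uniformly in $\theta$. It does not require a joint density of $(\xi(\theta),\xi'(\theta))$, nor a submersion/non-degeneracy check on the pair, nor a Kac--Rice integral. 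The conclusion of Bulinskaya is precisely the uniform-in-$\theta$ statement you are struggling to reach: a.s.\ $\xi$ and $\xi'$ have no common zero. So the correct move is to invoke Lemma~\ref{lem:bulinskaya} for $\xi(\theta):=H(R\eee^{i\theta})$ where $H(z)=(|G_1(z)|^2-|G_2(z)|^2)/\eee^{|z|^2}$, and then everything collapses to checking that $H(R\eee^{i\theta})$ has a bounded density, which by stationarity of $H$ (Lemma~\ref{lem:cond dist a+bi}) is the density of $H(0)=|G_1(0)|^2-|G_2(0)|^2$, a difference of two independent unit exponentials, with density $\tfrac12\eee^{-|x|}$.

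A secondary but real issue is your choice of $F(z)=|\Psi(z)|^2-1$. This function has poles where $G_2$ vanishes, and you correctly worry about the resulting bookkeeping; but it is unnecessary. The set $\Gamma$ is equally the zero set of the pole-free, real-analytic field $H$, and the paper already set up $H$ in the proof of Lemma~\ref{lem:gamma_nonsing}. Working with $H$ instead of $F$ makes both the tangency reformulation and the density computation immediate and eliminates the pole interaction entirely. So: same underlying idea, but you should use $H$ in place of $|\Psi|^2-1$, and apply Bulinskaya's lemma with only the marginal density hypothesis rather than trying to establish joint non-degeneracy of $(g,g')$ or run a Kac--Rice argument.
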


The proof is based on  the following slightly generalized version of the Bulinskaya lemma; see~\cite[Theorem~1]{bulinskaya} or~\cite[Proposition~1.20]{azais_wschebor}. The original proof of Bulinskaya applies with obvious modifications.
\begin{lemma}\label{lem:bulinskaya}
Let $(\xi(t))_{t\in D}$ be a real-valued random field defined on the open set $D\subset  \R^d$. Assume that
\begin{itemize}
\item[(i)] the sample paths of $\xi$ are $C^1$, with probability $1$;
\item[(ii)] for some $\kappa>0$ and every compact set $K\subset D$, the density of $\xi(t)$ exists and is bounded on the interval $(-\kappa, \kappa)$ uniformly over $t\in K$.
\end{itemize}
Then, $\xi$ and its gradient do not have common zeroes, with probability $1$.
\end{lemma}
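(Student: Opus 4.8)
The plan is to run the classical covering argument behind Bulinskaya's lemma, the only new feature being that here the \emph{full} gradient vanishes at a putative bad point, which yields a stronger local estimate than the usual ``degenerate differential'' condition. By countable subadditivity it suffices to show that, with probability one, $\xi$ and $\nabla\xi$ have no common zero inside a fixed compact cube $K\subset D$; after rescaling we may take $K=[0,1]^d$, and we let $M<\infty$ denote the bound on the density of $\xi(t)$ on $(-\kappa,\kappa)$, uniform over $t\in K$, supplied by hypothesis (ii). Write $E$ for the event that $\xi$ and $\nabla\xi$ have a common zero in $K$; it is enough to prove $\P(E)\le 2\eta$ for every $\eta>0$.

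Fix $\eta>0$. Since $\xi$ is a.s.\ $C^1$, its gradient is a.s.\ bounded and uniformly continuous on $K$; let $\omega(r):=\sup\{|\nabla\xi(s)-\nabla\xi(t)|:s,t\in K,\ |s-t|\le r\}$ be the (random) modulus of continuity, so $\omega(r)\downarrow 0$ a.s.\ as $r\downarrow 0$. First I would pass to a high-probability event: choose $L$ with $\P(\sup_K|\nabla\xi|>L)<\eta$, and, for a parameter $\eps>0$ to be sent to $0$ later, choose $m_0$ with $\P(\omega(\sqrt d/m_0)>\eps)<\eta$; on $\Omega:=\{\sup_K|\nabla\xi|\le L\}\cap\{\omega(\sqrt d/m_0)\le\eps\}$ one has $\P(\Omega^c)<2\eta$. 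Next, for an integer $m\ge m_0$ partition $K$ into its $m^d$ closed subcubes of side $1/m$, with centres $c_Q$. If $\Omega$ occurs and $t_0\in K$ is a common zero lying in a subcube $Q$, then for every $s\in Q$, using $\xi(t_0)=0$ and $\nabla\xi(t_0)=0$ and the monotonicity of $\omega$,
\[
|\xi(s)|=\Bigl|\int_0^1\nabla\xi\bigl(t_0+\theta(s-t_0)\bigr)\cdot(s-t_0)\,\dd\theta\Bigr|\le|s-t_0|\,\omega(|s-t_0|)\le\frac{\sqrt d\,\eps}{m},
\]
and similarly $|\nabla\xi(s)|\le\omega(|s-t_0|)\le\eps$; in particular $\sup_Q|\xi|\le\sqrt d\,\eps/m$.

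Thus, on $\Omega$, the event $E$ forces some subcube $Q$ to have $\sup_Q|\xi|\le\sqrt d\,\eps/m$, so, taking $m$ large enough that $\sqrt d\,\eps/m<\kappa$ and invoking hypothesis (ii),
\[
\P(E)\le 2\eta+\sum_Q\P\bigl(|\xi(c_Q)|\le\sqrt d\,\eps/m\bigr)\le 2\eta+m^d\cdot\frac{2M\sqrt d\,\eps}{m}=2\eta+2M\sqrt d\,\eps\,m^{d-1}.
\]
When $d=1$ the last term is $2M\eps$, independent of $m$, so letting $\eps\to0$ gives $\P(E)\le 2\eta$ and hence $\P(E)=0$. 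For $d\ge2$ this bare estimate does not suffice: the factor $m^{d-1}$ expresses that $\{\xi=0\}$ is a hypersurface, so smallness of $|\xi|$ on a tiny cube is not a rare event, and one must also exploit that $\nabla\xi(c_Q)$ is small on $\Omega$. I would therefore run the same scheme with the $\R^{d+1}$-valued map $t\mapsto(\xi(t),\nabla\xi(t))$ in place of $\xi$, using that in all of our applications $\xi$ is constructed from the Gaussian analytic functions $G_1,G_2$ and is real-analytic off the zeros and poles of $\Psi$ — so $\nabla\xi$ is locally Lipschitz with an a.s.\ finite constant and the joint density of $(\xi(t),\nabla\xi(t))$ is bounded near the origin — which turns the per-subcube probability into $O(m^{-(d+2)})$ and hence the union bound into $O(m^{-2})\to0$. \textbf{The main obstacle is exactly this step:} converting the vanishing of $\nabla\xi$ into a genuinely small-probability local event when $d\ge2$, i.e.\ supplementing hypothesis (ii) with the joint-density and Lipschitz information available in our setting; the remainder is the routine Bulinskaya covering argument of \cite{bulinskaya} and \cite[Proposition~1.20]{azais_wschebor}, the only other delicate point being the preliminary restriction to $\Omega$, which tames the random modulus $\omega$.
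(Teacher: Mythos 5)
Your covering argument is sound for $d=1$, and your observation that the bare estimate only yields $2M\sqrt d\,\eps\,m^{d-1}$ — which does not vanish for $d\ge 2$ — is correct and a genuinely delicate point that the paper glosses over (its one-line proof cites Bulinskaya and Az\"ais–Wschebor ``with obvious modifications''; those references prove either the $d=1$ case or the vector-valued case under a \emph{joint} density bound, not the statement as written). Note also a small internal tension in your setup: you fix $m_0$ as a function of $\eps$, so in the final display the exponent $m^{d-1}$ is really $m_0(\eps)^{d-1}$, and the competition between $\eps\to 0$ and $m_0(\eps)\to\infty$ is what the naive argument cannot control for $d\ge 2$ — you recognize this, which is good.

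However, your proposed repair contains a factual error in the very place it is needed. You run the scheme with $t\mapsto(\xi(t),\nabla\xi(t))\in\R^{d+1}$ and assert that in the paper's application ``the joint density of $(\xi(t),\nabla\xi(t))$ is bounded near the origin.'' For the field $H(z)=|F(z)|^2-|\widetilde F(z)|^2$ used in Lemma~\ref{lem:gamma_nonsing} this is false. By Lemma~\ref{lem:cond dist a+bi}, conditionally on $(F(0),\widetilde F(0))$ the gradient $\nabla H(0)$ is centred Gaussian with i.i.d.\ components of variance $2(|F(0)|^2+|\widetilde F(0)|^2)$. Writing $u=|F(0)|^2$, $v=|\widetilde F(0)|^2$ (i.i.d.\ unit exponentials), the joint density of $(H(0),\nabla H(0))$ at $(h,n)\in\R\times\R^2$ is
\begin{equation*}
p(h,n)=\frac{1}{8\pi}\int_{|h|}^{\infty}\frac{\eee^{-s}}{s}\,\eee^{-|n|^2/(4s)}\,\dd s,
\end{equation*}
which behaves like $\tfrac{1}{8\pi}\log\!\big(1/|n|\big)$ as $(h,n)\to 0$; in particular $p(0,0)=+\infty$. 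So the ``bounded joint density'' premise does not hold, and the per-subcube estimate $O(m^{-(d+2)})$ you invoke is not immediate. The singularity is only logarithmic and hence integrable — one still gets
\begin{equation*}
\P\big(|H(0)|\le\tau,\ \|\nabla H(0)\|\le\sigma\big)\;\lesssim\;\tau\,\sigma^{2}\,\log(1/\sigma),
\end{equation*}
which, combined with the local Lipschitz bound on $\nabla H$ (giving $\tau\sim m^{-2}$, $\sigma\sim m^{-1}$), yields a union bound of order $m^{-2}\log m\to 0$ for $d=2$. So the conclusion is salvageable in the paper's setting, but it requires precisely the extra estimate you assumed away, and it no longer proves the lemma as literally stated (which only posits a bound on the \emph{marginal} density of $\xi(t)$). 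In short: you correctly located the gap for $d\ge 2$, but the repair you sketched rests on a false hypothesis in exactly the instance that matters, and the argument should instead exploit the explicit (mildly singular) joint law coming from Lemma~\ref{lem:cond dist a+bi}.
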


\begin{proof}[Proof of Lemma~\ref{lem:gamma_nonsing}]
Note that $\Gamma$ is the zero set of the smooth, stationary random function $H(z):=(|G_1(z)|^2 - |G_2(z)|^2)/\eee^{|z|^2}$; see, e.g., Lemma~\ref{lem:cond dist a+bi} for stationarity. Our aim is to show that, with probability $1$, there is no point $z\in \C$ where this function vanishes together with its gradient. By Lemma~\ref{lem:bulinskaya}, it suffices to show that  $H(z)$ has a bounded density. Note that, by stationarity, we may consider $z=0$. The random variables $|G_1(0)|^2$ and $|G_2(0)|^2$ are unit exponential with density $\eee^{-x}\ind_{x>0}$. Hence, the density of $H(0)$ is $\frac 12 \eee^{-|x|}$, which is bounded.
%By a generalization of Bulinskaya Lemma~\cite{bulinskaya} stated in Proposition 6.12 on p.~179 in~\cite{azais_wschebor}, it suffices to show that the three-dimensional random vector $(H(z), \nabla H(z))$ has a density bounded by a constant $C$ independent of $z$. Observe that, by stationarity, the distribution of this random vector does not depend on $z$. Also, the convolution of a bounded density with an arbitrary distribution has a density bounded by the same constant.  Thus, it suffices to show that the density of $(|F(0)|^2, \nabla |F(0)|^2)$, where $F(z) = |G(z)| /\eee^{|z|^2/2}$,  is bounded. By Lemma~\ref{lem:cond dist a+bi}, the conditional distribution of $\nabla |F(0)|^2$ given that $F(0) =u \in \C$, is that of two independent normal variables with variance $2|u|^2$. Note also that the distribution of $|F(0)|^2$ is exponential with mean $1$. It follows that the density of  $(|F(0)|^2, \nabla |F(0)|^2)$ is given by
%$$
%f(x,y_1,y_2) = \eee^{-x} \frac{1}{4\pi x} \ldots
%$$
%We have a problem ... ??? We need to consider the convolution of two densities.
\end{proof}

\begin{proof}[Proof of Lemma~\ref{lem:non_tangent}]
Consider the random function $\xi:[0,2\pi]\to \R$ defined by $\xi(\theta) = H(R\eee^{i\theta})$. It suffices to show that the probability that there is $\theta\in [0,2\pi]$, where this function vanishes together with its derivative, is $0$. By Lemma~\ref{lem:bulinskaya}, it suffices to check that the density of $\xi(\theta)$ is bounded. But $\xi(\theta)$ has the same distribution as $H(0)$ whose density is $\frac 12 \eee^{-|x|}$; see the proof of Lemma~\ref{lem:gamma_nonsing}.
\end{proof}

Now we are in position to prove Lemma~\ref{lem:expect_local_conv}.
\begin{proof}[Proof of Lemma~\ref{lem:expect_local_conv}]
By the spherical invariance of $\Psi_n$, we may assume that $x_0 = (0,0,-1)$ is the south pole of $\bS^2$. First we prove the distributional convergence
\begin{equation}
\label{eq:Nn ball large->N(0;R)}
\nod(\Gamma_n; x_0, R/\sqrt n) \todistr \nod (\Gamma; R/2).
\end{equation}
For a function $f\in \bA(\bar D(0;2R) \to \bS^2)$ let $N(f)$ denote the number of connected components of the set $\{|\zeta\circ f| = 1\}$ lying entirely inside the open disc $D(0;R)$. The map $f\mapsto N(f)$ is well defined and continuous (in fact, locally constant) outside the closed set $\mathcal Y=\mathcal Y_1\cup \mathcal Y_2$, where
\begin{align*}
\mathcal Y_1
&=
\{f\in \bA(\bar D(0;2R) \to \bS^2)\colon \text{ there is $z\in D(0;R)$ such that } |f(z)|=1, \nabla |f(z)|^2 = 0  \},\\
\mathcal Y_2
&=
\{f\in \bA(\bar D(0;2R) \to \bS^2)\colon \text{ there is $z$ such that } |z|=R,   |f(z)|=1,  z^{-1}\nabla |f(z)|^2 \in\R\}.
\end{align*}
In words, $\mathcal Y_1$ is the collection of functions $f$ so that the curve $\Gamma(f):=\{|f(z)|=1\}$ is singular, whereas $\mathcal Y_2$ is a collection of functions $f$ so that $\Gamma(f)$ intersects the boundary of the disc $D(0;R)$ non-transversally. By lemmas~\ref{lem:gamma_nonsing} and~\ref{lem:non_tangent}, we have $\P[\zeta^{-1}\circ \Psi\in \mathcal Y]=0$.
Consider the function
$$
\Phi_n: \bar D(0;2R) \to \bS^2,
\quad
z\mapsto (\zeta^{-1} \circ\Psi_n \circ\zeta^{-1})\left(\frac{z\sin (R/\sqrt n)}{R(1+ \cos(R/\sqrt n))}\right).
$$
Observe that $\nod(\Gamma_n; x_0, R/\sqrt n)$, the number of connected components of the set $$\{x\in \bS^2\colon |\Psi_n(x)|=1\}$$
lying entirely inside the cap $B(x_0;R/\sqrt n)$, is the same as
$N(\Phi_n)$ since the map given in~\eqref{eq:map_cap_disk} is a bijection between $\bar D(0;R)$ and the spherical cap $\bar B(x_0; R/\sqrt n)$. By Proposition~\ref{prop:functional_CLT} together with the continuous mapping theorem~\cite[Theorem~2.7]{billingsley}, we have
$$
\nod(\Gamma_n; x_0, R/\sqrt n) = N(\Phi_n) \todistr N(\Psi(\cdot /2)) = \nod(\Gamma; R/2).
$$

%the number of connected components of the set $\left\{z\in D(0,R) \colon \left|\Phi_n(z) \right| = 1\right\}$ not touching the boundary of the disk $\bar D(0;R)$. The map assigning each function $f\in \bA(\bar D(0;R) \to \bS^2)$ the number of connected components of $\{|f| = 1\}$ not touching the boundary, is a continuous map outside the set
%This is done as follows: we have functional convergence of random analytic functions stated in Proposition \ref{prop:functional_CLT}. Thus,
%we need to prove that the number of connected components is a continuous functional of the holomorphic function, outside a set of measure $0$.

Given the distributional convergence \eqref{eq:Nn ball large->N(0;R)}, to complete the proof of the lemma it remains to show that the family $\{\nod(\Gamma_n; x_0, R/\sqrt n)\}_{n\in\N}$ is uniformly integrable. To this end, we recall that
$$
\nod(\Gamma_n; x_0, R/\sqrt n) = \nod_{\delta/n}(\Gamma_n; x_0, R/\sqrt n) + \nod_{\delta/n - sm}(\Gamma_n; x_0, R/\sqrt n).
$$
As one easily checks, the uniform integrability follows from the following two claims:
\begin{itemize}
\item[(i)] For every $\delta>0$, the random variables $\nod_{\delta/n}(\Gamma_n; x_0, R/\sqrt n)$ are a.s.\ bounded by some constant depending only on $\delta$ and $R$;
\item[(ii)]   $\lim_{\delta\downarrow 0}\limsup_{n\to\infty} \E [\nod_{\delta/n-sm}(\Gamma_n; x_0, R/\sqrt n)] = 0$.
\end{itemize}
Since (ii) follows from Proposition~\ref{prop:sm do small_finite_deg}, we need to verify (i).
To this end, we observe that, by corresponding to $\delta/n$-large
component $\gamma$ lying entirely inside the spherical cap $B(x_0; R/\sqrt n)$ the domain of area $>\delta/n$ enclosed inside
$\gamma$, the number of $\delta/n$-large components of $\Gamma_n$ lying entirely
in $B(x_0; R/\sqrt n)$ is a.s.\
bounded by
\begin{equation*}
\nod_{\delta/n}(\Gamma_n; x_0, R/\sqrt n) \le \frac{\area(B(x_0; R/\sqrt n))}{\delta/n} =  \frac{2\pi (1-\cos(R/\sqrt{n}))}{\delta/n} \le c\cdot \frac{R^{2}}{\delta},
\end{equation*}
with $c>0$ an absolute constant.
Therefore the random variables $\nod_{\delta/n}(\Gamma_n; x_0, R/\sqrt n)$ are uniformly bounded and (ii) holds.
%From this, it follows that the expectations
%\begin{equation*}
%\E[\nod_{\delta/n}(\Gamma_n; x_0, R/\sqrt n)] \rightarrow \E[\nod_{\delta} (\Gamma;R)]
%\end{equation*}
%converge.
\end{proof}

%\subsection{Notation}
%Recall that $\bS^2$ denotes the unit sphere in $\R^3$ centred at the origin, and $\proj: %\bS^2\to \C\cup\{\infty\}$ is the stereographic projection.  The geodesic distance on $\bS^2$ %will be denoted by $\rho$, and the standard Riemannian volume by $\sigma$.  The open and closed %caps of radius $r>0$ centred at $x\in \bS^2$ will be denoted by
%\begin{align*}
%B(x;r) := \{y\in\bS^2 \colon \rho(x,y)<r\},
%\quad \text{ and } \quad
%\bar B(x;r) := \{y\in\bS^2 \colon \rho(x,y)\leq r\},
%\end{align*}
%respectively.  Note that $\sigma(\bS^2) = 4\pi$ and that $\sigma(B(x_0;R))$ does not depend on %the choice of the point $x_0\in\bS^2$. We shall frequently use the asymptotic relation
%$$
%\lim_{r\downarrow 0} \frac{\sigma(B(x_0;r))}{\pi r^2} = 1.
%$$
%%We also recall that $\Psi_n$ denotes the Lerario-Lundberg random rational function
%$$
%\Psi_n(x) := \frac{p_n(\proj(x))}{q_n(\proj(x))}, \quad x\in\bS^2.
%$$

\subsection{Sandwich estimates}
Our aim is to prove Theorem~\ref{theo:main} stating that
$$
\lim_{n\to\infty} \frac{\E [\nod(\Gamma_n)]}{n} = \pi \cns,
$$
with $\cns$ same as in Theorem \ref{theo:cns_existence}.
%\begin{proposition}\label{prop:lim_E_N_n_eps}
%We have
%where $\cns(\delta)$ is defined as in Theorem~\ref{theo:cns_existence_eps}.
%\end{proposition}
For the proof we shall analyze the ``local'' counterparts of $\nod(\Gamma_n)$. Recall that $\nod(\Gamma_n;x,r)$ denotes the number of  connected components of $\Gamma_n$ completely contained in the open cap $B(x;r)$. Similarly, let $\nod^*(\Gamma_n;x,r)$ be the number of connected components of $\Gamma_n$ intersecting the closed cap $\bar B(x;r)$. The next lemma is analogous to the integral-geometric sandwich of Nazarov-Sodin~\cite[Lemma 1 on p.~6]{sodin_lec_notes}. The proof, given below, is a straightforward adaptation of the Nazarov-Sodin's one.

Denote the standard Riemannian volume on $\bS^2$ by $\sigma$. Note that $\sigma(\bS^2) = 4\pi$ and that $\sigma(B(x_0;R))$ does not depend on the choice of the point $x_0\in\bS^2$. We shall frequently use the asymptotic relation
$$
\lim_{r\downarrow 0} \frac{\sigma(B(x_0;r))}{\pi r^2} = 1.
$$

\begin{lemma}\label{lem:sandwich}
For every $r>0$, we have
\begin{equation}\label{eq:sandwich}
\frac{\int_{\bS^2} \nod(\Gamma_n;x,r) \sigma (\dd x)}{\sigma(B(x_0;r))} \leq \nod(\Gamma_n)\leq \frac{\int_{\bS^2} \nod^*(\Gamma_n;x,r) \sigma (\dd x)}{\sigma(B(x_0;r))}.
\end{equation}
\end{lemma}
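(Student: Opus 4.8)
The plan is to adapt the classical integral-geometric sandwich of Nazarov--Sodin verbatim, the only new ingredient being bookkeeping for the spherical geometry, which is harmless because $\sigma(B(x_0;r))$ is independent of $x_0$. I would argue both inequalities by a Fubini/counting argument: for a fixed configuration $\Gamma_n$, write $\nod(\Gamma_n;x,r) = \sum_{\gamma} \ind\{\gamma \subset B(x;r)\}$, the sum running over the connected components $\gamma$ of $\Gamma_n$, and similarly $\nod^*(\Gamma_n;x,r) = \sum_{\gamma} \ind\{\gamma \cap \bar B(x;r) \neq \emptyset\}$. Integrating in $x$ over $\bS^2$ and exchanging sum and integral gives
\[
\int_{\bS^2} \nod(\Gamma_n;x,r)\,\sigma(\dd x) = \sum_{\gamma} \sigma\big(\{x \colon \gamma \subset B(x;r)\}\big),
\qquad
\int_{\bS^2} \nod^*(\Gamma_n;x,r)\,\sigma(\dd x) = \sum_{\gamma} \sigma\big(\{x \colon \gamma \cap \bar B(x;r) \neq \emptyset\}\big).
\]

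For the upper bound on $\nod(\Gamma_n)$: each component $\gamma$ is compact (it is a closed curve on the compact sphere), so it is contained in some cap $\bar B(x;r)$ — e.g. any cap centred at a point of $\gamma$ works, since $\gamma$ itself may be large, but in fact one only needs $\gamma\cap \bar B(x;r)\ne\emptyset$ for at least a set of centres $x$ of positive measure (take $x$ ranging over a small cap around any fixed point of $\gamma$, giving measure $\ge \sigma(B(x_0;r'))$ for small $r'$; more simply, $\{x\colon \gamma\cap\bar B(x;r)\ne\emptyset\}\supseteq \bar B(y;r)$ for any $y\in\gamma$, which has measure $\sigma(B(x_0;r))$). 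Hence each summand on the right is $\ge \sigma(B(x_0;r))$, giving $\int_{\bS^2}\nod^*(\Gamma_n;x,r)\,\sigma(\dd x) \ge \sigma(B(x_0;r))\cdot \nod(\Gamma_n)$, which is the right-hand inequality. For the lower bound: $\{x\colon \gamma\subset B(x;r)\}$ is a subset of $\bS^2$ of measure at most $\sigma(B(x_0;r))$ — indeed if $\gamma\subset B(x;r)$ then picking any $y\in\gamma$ we get $y\in B(x;r)$, i.e. $x\in B(y;r)$, so the set of admissible $x$ is contained in $B(y;r)$, of measure $\sigma(B(x_0;r))$. Summing over $\gamma$, only the components actually counted contribute, and $\int_{\bS^2}\nod(\Gamma_n;x,r)\,\sigma(\dd x) \le \sigma(B(x_0;r))\cdot\nod(\Gamma_n)$, the left-hand inequality. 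Dividing through by $\sigma(B(x_0;r))$ yields \eqref{eq:sandwich}.

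The only point requiring a little care — and the place I'd expect the main (minor) obstacle — is measurability of the sets $\{x\colon \gamma\subset B(x;r)\}$ and $\{x\colon\gamma\cap\bar B(x;r)\ne\emptyset\}$ and of the integrands $x\mapsto \nod(\Gamma_n;x,r)$, $x\mapsto \nod^*(\Gamma_n;x,r)$, needed to justify the Fubini exchange. This follows from the fact that $\gamma$ is compact: $\{x\colon\gamma\cap\bar B(x;r)\ne\emptyset\} = \bigcup_{y\in\gamma}\bar B(y;r)$ is closed, and $\{x\colon\gamma\subset B(x;r)\} = \{x\colon \sup_{y\in\gamma}\rho(x,y)<r\}$ is open, since $\gamma$ is compact. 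One also should note that $\Gamma_n$ has finitely many components almost surely (it is a real-analytic curve on a compact surface, or: bounded by the zero/pole count as in the proof of Theorem~\ref{theo:cns_existence}), so all sums above are finite and no convergence issue arises. Everything else is the Nazarov--Sodin argument with $\R^d$ replaced by $\bS^2$.
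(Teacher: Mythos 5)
Your proof is correct and takes essentially the same approach as the paper: both express $\int_{\bS^2}\nod(\Gamma_n;x,r)\,\sigma(\dd x)$ and $\int_{\bS^2}\nod^*(\Gamma_n;x,r)\,\sigma(\dd x)$ as sums over components $\gamma$ of the measures of $\{x\colon\gamma\subset B(x;r)\}$ and $\{x\colon\gamma\cap\bar B(x;r)\neq\varnothing\}$, and then bound each summand by $\sigma(B(x_0;r))$ from above (resp.\ below) by noting that these sets are contained in (resp.\ contain) $B(y;r)$ (resp.\ $\bar B(y;r)$) for any $y\in\gamma$. Your extra remarks on measurability and finiteness of the component count are correct but not included in the paper, which simply takes these for granted.
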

\begin{proof}
Given  a connected component $\gamma$ of the nodal set $\Gamma_n$ introduce the quantities
\begin{align*}
G(\gamma)
&:=
\bigcap_{v\in\gamma} B(v;r) = \{u\in\bS^2\colon \gamma \subset B(u;r)\},\\
G^*(\gamma)
&:=
\bigcup_{v\in\gamma} \bar B(v;r) = \{u\in\bS^2\colon \gamma \cap \bar B(u;r)\neq \varnothing\}.
\end{align*}
Then, by definition, $\sigma(G(\gamma)) \leq \sigma(B(x_0;r))\leq \sigma(G^*(\gamma))$. Taking the sum over all connected components $\gamma$ of $\Gamma_n$, we arrive at
\begin{equation}
\label{eq:IG sand app}
\sum_{\gamma\subseteq\Gamma_{n}} \sigma(G(\gamma)) \leq \nod(\Gamma_n) \cdot \sigma(B(x_0;r)) \leq \sum_{\gamma\subseteq\Gamma_{n}} \sigma(G^*(\gamma)).
\end{equation}
It remains to observe that the l.h.s.\ of \eqref{eq:IG sand app} equals $\int_{\bS^2} \nod(\Gamma_n;x,r) \sigma(\dd x)$,
whereas the r.h.s.\ of \eqref{eq:IG sand app} is $\int_{\bS^2} \nod^*(\Gamma_n;x,r) \sigma(\dd x)$.
\end{proof}

Taking the expectation in~\eqref{eq:sandwich} and using the Fubini theorem, we arrive at the estimate
$$
\frac{\int_{\bS^2} \E [\nod(\Gamma_n;x,r)] \sigma (\dd x)}{\sigma(B(x_0;r))} \leq \E[ \nod(\Gamma_n)] \leq \frac{\int_{\bS^2} \E [\nod^*(\Gamma_n;x,r)] \sigma (\dd x)}{\sigma(B(x_0;r))}.
$$
Noting that by the $\text{SO}(3)$-invariance of the random rational function $\Psi_n$, the quantities
$\E[ \nod(\Gamma_n; x,r)]$ and $\E [\nod^*(\Gamma_n; x,r)]$ do not depend on the choice of $x\in\bS^2$, we can write
\begin{equation}\label{eq:E_N_n_sandwich}
4\pi \frac{\E [\nod(\Gamma_n; x_0,r)]}{\sigma(B(x_0;r))} \leq \E [\nod(\Gamma_n)] \leq 4\pi \frac{\E [\nod^*(\Gamma_n;x_0,r)]}{\sigma(B(x_0;r))}
\end{equation}
for an arbitrary $x_0\in\bS^2$. These estimates will be important in what follows.

\subsection{Lower bound in Theorem~\ref{theo:main}}
Our aim is to show that
\begin{equation}\label{eq:EN_n_lower_bound}
\liminf_{n\to \infty}\frac{\E [\nod(\Gamma_n)]}{n} \geq \pi \cns.
\end{equation}
Replacing in the first  inequality of~\eqref{eq:E_N_n_sandwich} $r$ by $R/\sqrt n$ and dividing by $n$, we have
$$
\frac{\E [\nod(\Gamma_n)]}{n} \geq 4\pi \frac{\E [\nod(\Gamma_n; x_0,R/\sqrt n)]}{n\sigma(B(x_0;R/\sqrt n))}.
$$
Taking the large $n$ limit, using  Lemma~\ref{lem:expect_local_conv} and observing that $n\sigma(B(x_0;R/\sqrt n)) \to \pi R^2$, we obtain
$$
\liminf_{n\to \infty}\frac{\E [\nod(\Gamma_n)]}{n} \geq 4\frac{\E [\nod(\Gamma;R/2)]}{R^2}
=
\pi \frac{\E [\nod(\Gamma;R/2)]}{\pi (R/2)^2}.
$$
This holds for arbitrary $R>0$. Letting $R\to\infty$ and using Theorem~\ref{theo:cns_existence}, we  arrive at the claimed lower bound~\eqref{eq:EN_n_lower_bound}.

\subsection{Upper bound in Theorem~\ref{theo:main}}
%In the following, $R$ and $r$ denote two large radii with $R$ being much larger than $r$.
Replacing in the second  inequality of~\eqref{eq:E_N_n_sandwich} $r$ by $(R+1)/\sqrt n$ and dividing by $n$, we have
\begin{equation}\label{eq:E_N_n_upper_proof1}
\frac{\E [\nod(\Gamma_n)]}{n} \leq 4\pi \frac{\E [\nod^*(\Gamma_n;x_0,(R+1)/\sqrt n)]}{n\sigma(B(x_0;(R+1)/\sqrt n))}.
\end{equation}
Let $\nod'(\Gamma_n; x_0, R/\sqrt n, (R+1)/\sqrt n)$ be the number of  connected components of $\Gamma_n$ that intersect the disc $\bar B(x_0; R/\sqrt n)$ and are completely contained inside the larger disc $B(x_0,(R+1)/\sqrt n)$. Further, let $\nod''(\Gamma_n; x_0, R/\sqrt n, (R+1)/\sqrt n)$ be the number of connected components of $\Gamma_n$ that intersect the disc $\bar B(x_0; R/\sqrt n)$ but are not completely contained inside $B(x_0,(R+1)/\sqrt n)$. Evidently,
\begin{equation}
\label{eq:int=lying+not}
\begin{split}
&\E [\nod^*(\Gamma_n; x_0,(R+1)/\sqrt n)]
\\&=
\E [\nod'(\Gamma_n; x_0, R/\sqrt n, (R+1)/\sqrt n)] + \E [\nod''(\Gamma_n; x_0, R/\sqrt n, (R+1)/\sqrt n)].
\end{split}
\end{equation}
Let us provide upper bounds on both expectations on the right-hand side. For the first expectation, we use the trivial estimate
\begin{equation}
\label{eq:bound not lying tot}
\E [\nod'(\Gamma_n; x_0, R/\sqrt n, (R+1)/\sqrt n)] \leq \E [\nod(\Gamma_n; x_0, (R+1)/\sqrt n)].
\end{equation}
To estimate the second expectation, denote by $L_n(A)$ the total spherical length of the nodal set $\Gamma_n$ intersected with an open, connected set $A\subset \bS^2$. Any component contributing to  $\nod''(\Gamma_n; x_0, R/\sqrt n, (R+1)/\sqrt n)$ must have length at least $1/\sqrt n$ inside the ring $B(x_0; (R+1)/\sqrt n )\backslash B(x_0; R/\sqrt n )$, so that we have
\begin{equation}
\label{eq:not lying bnd}
\nod''(\Gamma_n; x_0, R/\sqrt n, (R+1)/\sqrt n) \leq \sqrt n L_n(B(x_0; (R+1)/\sqrt n )\backslash B(x_0; R/\sqrt n )).
\end{equation}
\begin{lemma}\label{lem:exp_length_local}
For every $R>0$ and $x_0\in\bS^2$ we have
\begin{equation}
\label{eq:exp length cap}
\E [L_n(B(x_0; R/\sqrt n ))] = \frac {\pi}{8} \sigma(B(x_0; R/\sqrt n)) \sqrt n.
\end{equation}
\end{lemma}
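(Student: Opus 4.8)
\emph{Proof plan.} The idea is to avoid a direct Kac--Rice computation and instead extract the local formula from the \emph{global} length computation of Lerario and Lundberg, using only the rotational symmetry of the model. First I would observe that $\Gamma_n$ is, with probability one, a finite union of rectifiable (in fact real-analytic) arcs of finite total spherical length: pulling back through $\zeta$, the equation $|\Psi_n(x)|=1$ becomes $|p_n(\zeta(x))|^2=|q_n(\zeta(x))|^2$, a real-algebraic equation on $\bS^2$, and a.s. $p_n$ and $q_n$ are not proportional and the leading coefficients satisfy $|\xi_n/\eta_n|\neq 1$, so $\Gamma_n$ is a genuine $1$-dimensional curve avoiding the north pole. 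Consequently the spherical length defines a (random) finite Borel measure $L_n$ on $\bS^2$, with $L_n(A)$ the length of $\Gamma_n\cap A$; its finiteness in expectation is guaranteed by the Lerario--Lundberg formula $\E[L_n(\bS^2)]=(\pi^2/2)\sqrt n<\infty$. Taking expectations, $A\mapsto \E[L_n(A)]$ is then a finite, non-random Borel measure on $\bS^2$.

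Next I would invoke the $\text{SO}(3)$-invariance of the law of $\Psi_n$, and hence of $\Gamma_n$, established in~\cite{lerario_lundberg}: for every rotation $g\in\text{SO}(3)$ the set $g\Gamma_n$ is equal in distribution to $\Gamma_n$, and since spherical length is rotation-invariant this gives $\E[L_n(gA)]=\E[L_n(A)]$ for every Borel set $A$. Thus $\E[L_n(\cdot)]$ is a rotation-invariant finite Borel measure on $\bS^2$; by uniqueness (up to scaling) of such a measure, $\E[L_n(A)]=c_n\,\sigma(A)$ for all Borel $A$, with some constant $c_n\ge 0$.

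Finally, evaluating at $A=\bS^2$ and using $\sigma(\bS^2)=4\pi$ together with $\E[L_n(\bS^2)]=(\pi^2/2)\sqrt n$ yields $c_n=\frac{\pi\sqrt n}{8}$. Specializing to $A=B(x_0;R/\sqrt n)$ then gives $\E[L_n(B(x_0;R/\sqrt n))]=\frac{\pi}{8}\,\sigma(B(x_0;R/\sqrt n))\,\sqrt n$, as claimed; whether the cap is taken open or closed is immaterial since the bounding circle has $\sigma$-measure zero.

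The only delicate point — and the closest thing to an obstacle — is the measure-theoretic bookkeeping: one must check that $\Gamma_n$ is a.s. rectifiable so that ``spherical length'' unambiguously defines a measure, and that $A\mapsto\E[L_n(A)]$ is countably additive (so that the symmetry argument and the uniqueness of the invariant measure apply), both of which follow from the a.s. real-analyticity of $\Gamma_n$ and the finiteness of $\E[L_n(\bS^2)]$. Should one prefer to sidestep the invariant-measure argument, the same constant can be obtained directly by a Kac--Rice computation for the nodal length density of the stationary spherical field $H_n:=(|p_n\circ\zeta|^2-|q_n\circ\zeta|^2)/(\text{normalization})$ underlying $\Gamma_n$, but the symmetry argument above is considerably shorter and reuses the length already computed in~\cite{lerario_lundberg}.
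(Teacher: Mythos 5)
Your proof is correct, and it rests on exactly the same two inputs as the paper's — the Lerario--Lundberg global formula $\E[L_n(\bS^2)]=(\pi^2/2)\sqrt n$ and the $\mathrm{SO}(3)$-invariance of the law of $\Gamma_n$ — but the mechanism you use to pass from global to local is genuinely different. You observe that $A\mapsto\E[L_n(A)]$ is a finite rotation-invariant Borel measure on $\bS^2$ and invoke uniqueness of such a measure up to scaling, then read off the constant $c_n$ by evaluating at $A=\bS^2$. The paper instead fixes the cap radius, notes that $\E[L_n(B(x;R/\sqrt n))]$ is independent of $x$ by invariance, integrates this constant over $x\in\bS^2$, and then swaps the order of integration via a Fubini identity for the Hausdorff length measure, namely $\int_{\bS^2}L_n(B(x;r))\,\sigma(\dd x)=\sigma(B(x_0;r))\,L_n(\bS^2)$. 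The paper's Fubini trick is a slightly more hands-on integral-geometric computation that only ever speaks about caps; your invariant-measure argument is a touch more abstract but in exchange gives the stronger conclusion $\E[L_n(A)]=\tfrac{\pi\sqrt n}{8}\sigma(A)$ for \emph{all} Borel $A$, not just caps. Both routes are short, both are correct, and both correctly reduce the lemma to the already-known global length.
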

\begin{proof}
The essential work has already been done by Lerario and Lundberg~\cite{lerario_lundberg} who showed that $\E [L_n(\bS^2)] = (\pi^2/2) \sqrt n$. In fact, their result applies, after scaling, to subdomains of $\bS^2$, which yields~\eqref{eq:exp length cap}, but we also provide an independent argument based only on the above formula for $\E [L_n(\bS^2)]$.
By the rotational invariance, the expectation on the left-hand side of \eqref{eq:exp length cap}
does not depend on $x_0\in\bS^2$. Integrating over $x_0$, we get
$$
4\pi \E [L_n(B(x_0; R/\sqrt n ))] = \int_{\bS^2} \E [L_n(B(x; R/\sqrt n ))]\sigma(\dd x)
=
\E  \left[\int_{\bS^2} L_n(B(x; R/\sqrt n ))\sigma(\dd x)\right].
$$
Fubini's theorem for the Hausdorff length measure implies that
$$
\int_{\bS^2}  L_n(B(x; R/\sqrt n ))\sigma(\dd x)
=
\sigma (B(x_0;R/\sqrt n)) L_n(\bS^2),
$$
from which the claim follows since $\E [L_n(\bS^2)]= (\pi^2/2)\sqrt n$.
\end{proof}
Recall that $n\sigma(B(x_0; R/\sqrt n)) \to \pi R^2$ as $n\to\infty$.  Applying Lemma~\ref{lem:exp_length_local} to the caps $B(x_0; (R+1)/\sqrt n)$ and $B(x_0; R/\sqrt n)$ and subtracting the results, we obtain
\begin{equation}
\label{eq:not lying bnd above}
\lim_{n\to\infty} \sqrt n\, \E [L_n(B(x_0; (R+1)/\sqrt n )\backslash B(x_0; R/\sqrt n ))] = \frac {\pi^2}{8} (2R+1).
\end{equation}
Substituting the bounds \eqref{eq:bound not lying tot} and \eqref{eq:not lying bnd} into \eqref{eq:int=lying+not},
and upon letting $n\to\infty$, while keeping $R$ sufficiently large fixed, we obtain:
\begin{align*}
&\limsup_{n\to\infty} \E [\nod^*(\Gamma_n; x_0,(R+1)/\sqrt n)] \leq
\limsup_{n\to\infty} \E [\nod(\Gamma_n; x_0, (R+1)/\sqrt n)]
\\&+
\limsup_{n\to\infty} \sqrt n \,\E [L_n(B(x_0; (R+1)/\sqrt n )\backslash B(x_0; R/\sqrt n ))]\\
&\quad \quad =
\E \left[\nod \left(\Gamma;\frac{R+1}{2}\right)\right]
+
\frac {\pi^2}{8} (2R+1),
\end{align*}
thanks to \eqref{eq:not lying bnd above}, and lemmas \ref{lem:expect_local_conv} and \ref{lem:exp_length_local}.
Combining this with~\eqref{eq:E_N_n_upper_proof1} and observing that $n\sigma(B(x_0;(R+1)/\sqrt n))\to \pi(R+1)^2$ as $n\to\infty$ yields
\begin{equation*}
\begin{split}
\limsup_{n\to\infty} \frac{\E [\nod(\Gamma_n)]}{n}
&\leq
4\pi \limsup_{n\to\infty} \frac{\E [\nod^*(\Gamma_n; x_0,(R+1)/\sqrt n)]}{n\sigma(B(x_0;(R+1)/\sqrt n))}
\\&\leq
4\pi \frac{\E [\nod (\Gamma;(R+1)/2)]
+
\frac {\pi^2}{8} (2R+1)
}{\pi (R+1)^2}.
\end{split}
\end{equation*}
The above holds for every $R>0$. Letting $R\to\infty$ and applying Theorem~\ref{theo:cns_existence}, we finally arrive at
$$
\limsup_{n\to\infty} \frac{\E [\nod(\Gamma_n)]}{n} \leq \pi \lim_{R\to\infty}\frac{\E [\nod (\Gamma;(R+1)/2)]
}{\pi ((R+1)/2)^2} + 0 = \pi \cns.
$$
This completes the proof of Theorem~\ref{theo:main} assuming the estimates on the number of small components in
propositions~\ref{prop:sm do small}
and~\ref{prop:sm do small_finite_deg}, proved in the course of the next section.

\section{Proof of propositions \ref{prop:sm do small}-\ref{prop:sm do small_finite_deg}:
dismissing the small components}\label{sec:small_components}

\subsection{Small components of \texorpdfstring{$\Psi$}{Psi}: Proof of Proposition \ref{prop:sm do small}}
Recall that
\begin{equation}
\label{eq:Gz sum normals}
G(z) = \sum\limits_{n=0}^{\infty}\frac{\xi_{n}}{\sqrt{n!}}z^{n},
\end{equation}
where $\xi_{n}$ are i.i.d. standard (complex) Gaussian random variables. The function $G(z)$ is GEF with covariance
\begin{equation*}
r_{G}(z,w)=\E\left[G(z)\cdot \overline{G(w)}\right] = \eee^{z\overline{w}},
\end{equation*}
and correlation
\begin{equation*}
\E\left[\frac{G(z)}{\sqrt{r_{G}(z,z)}} \cdot \frac{\overline{G(z)}}{\sqrt{r_{G}(w,w)}}\right] = \eee^{z\overline{w}-|z|^{2}/2-|w|^{2}/2}.
\end{equation*}
We are interested in the number of small components of the function $\Psi$ which has the same distribution as the number of small components of
\begin{equation*}
H^{*}(z) := |G(z)|^{2}-|\widetilde{G}(z)|^{2},
\end{equation*}
where $\widetilde{G}$ is an independent copy of $G$. Equivalently,
we might consider the {\em normalised} random field
\begin{equation}
\label{eq:H(z)=F^2-tild|F|^2}
H(z):=\frac{1}{r_{G}(z,z)}H^{*}(z) = |F(z)|^{2}-|\widetilde{F}(z)|^{2},
\end{equation}
where
\begin{equation}
\label{eq:F=G norm}
F(z):= \frac{1}{\sqrt{r_{G}(z,z)}}G(z),
\end{equation}
and $\widetilde{F}(z)$ its independent copy. We have
\begin{equation}
\label{eq:rF def}
r_{F}(z,w):=\E[F(z)\cdot \overline{F(w)}] = \eee^{z\overline{w}-|z|^{2}/2-|w|^{2}/2}.
\end{equation}

We are interested in the distribution of the random vector
\begin{equation*}
(F(z),\nabla | F(z)|^{2})\in \C \times \R^{2},
\end{equation*}
where we mean $\nabla | F(z)|^{2} := (\partial_{x} | F(z)|^{2},\partial_{y} | F(z)|^{2})$
is the gradient of $z\mapsto |F(z)|^{2}$, considered as a function $\R^{2}\rightarrow\R$.

\begin{lemma}
\label{lem:cond dist a+bi}
Let $z\in \C$ and $a,b\in\R$ be two real numbers.

\begin{enumerate}

\item The distribution of $\nabla |F(0)|^{2}$ conditioned
on $F(0)=a+ib$ is that of two independent centred Gaussians with variance $2(a^{2}+b^{2})$.

\item
\label{it:|F(z)|^2 stat}
The (non-Gaussian) random field $|F(z)|^{2}$ is stationary.

\item
The random field $H(z)$ in \eqref{eq:H(z)=F^2-tild|F|^2} is stationary.

\end{enumerate}

\end{lemma}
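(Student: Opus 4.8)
The plan is to compute the joint law of $(F(z), \nabla|F(z)|^2)$ using the fact that $(F(z), \partial_x F(z), \partial_y F(z))$ is a complex Gaussian vector whose covariances we can read off from $r_F$ in~\eqref{eq:rF def}, and then to exploit the special structure of the correlation function $r_F(z,w) = \eee^{z\bar w - |z|^2/2 - |w|^2/2}$, which has modulus $\eee^{-|z-w|^2/2}$ — already suggesting stationarity of $|F|^2$.

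First I would prove part (1). By part (2) (or directly) it suffices to take $z=0$. Writing $F = G/\sqrt{r_G(z,z)}$ and using $r_G(z,w)=\eee^{z\bar w}$, one computes at $z=0$: $F(0)$ has the standard complex Gaussian law, $\E[F(0)\overline{\partial_x F(0)}] $ and $\E[F(0)\overline{\partial_y F(0)}]$ and $\E[F(0) \partial_j F(0)]$ all vanish (differentiate $r_F(z,w)$ in $z$ or $\bar z$ and set $z=w=0$; the normalization kills the first-order terms), while $\E[\partial_x F(0)\overline{\partial_x F(0)}] = \E[\partial_y F(0)\overline{\partial_y F(0)}] = 1$ and the cross terms vanish. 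Hence $F(0)$ is independent of $(\partial_x F(0), \partial_y F(0))$, and the latter are i.i.d.\ standard complex Gaussian. Now write $F(0) = a+ib$ deterministically under the conditioning and $\partial_j F(0) = U_j + i W_j$ with $U_j, W_j$ independent real $N(0,1/2)$; then $\partial_j |F(z)|^2|_{z=0} = 2\Re(\overline{F(0)}\,\partial_j F(0)) = 2(aU_j + bW_j)$, which is a centred real Gaussian of variance $4(a^2+b^2)\cdot \tfrac12 = 2(a^2+b^2)$, and the two coordinates $j=1,2$ are independent since they involve disjoint, independent batches of Gaussians. This gives part (1).

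For part (2), the cleanest route is to note that $|F(z)|^2$ depends on $F$ only through its modulus, and that for any $t\in\C$ the normalized shifted field $\big(G(z+t)/\eee^{\frac12|t|^2 + \bar t z}\big)_z \eqdistr (G(z))_z$ by Lemma~\ref{lem:GAF_transform}(1); since the normalizing factor $\eee^{\frac12|t|^2+\bar t z}$ has modulus $\eee^{\Re(\bar t z) + \frac12|t|^2}$, and $|F(z+t)|^2 = |G(z+t)|^2/r_G(z+t,z+t) = |G(z+t)|^2\eee^{-|z+t|^2}$, a direct substitution shows $|F(z+t)|^2 \eqdistr |F(z)|^2$ as random fields. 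Alternatively one checks directly that $r_F$ is a function of $z-w$ up to a unimodular factor — i.e.\ $|r_F(z,w)| = \eee^{-|z-w|^2/2}$ — and that $|F(z)|^2 = |\widehat F(z)|^2$ where $\widehat F(z) := \eee^{i\Im(\text{something})}F(z)$ is an honestly stationary complex Gaussian field; the modulus-squared is then manifestly stationary. I would present whichever of these is shortest, most likely the Lemma~\ref{lem:GAF_transform}(1) argument.

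Part (3) is then immediate: $H(z) = |F(z)|^2 - |\widetilde F(z)|^2$ is the difference of two independent copies of the stationary field from part (2), hence stationary, with the joint law of $(H(z+t))_{z}$ matching $(H(z))_z$ because the pair $(F,\widetilde F)$ is jointly stationary (the shift acts on each coordinate by the same unimodular-factor substitution, preserving independence). The only mild subtlety — and the step I would be most careful about — is bookkeeping the normalization factors so that the shift really does act as an exact distributional symmetry rather than an asymptotic one; this is precisely the content of Lemma~\ref{lem:GAF_transform}(1), so no new difficulty arises, and there is no genuine obstacle in this lemma.
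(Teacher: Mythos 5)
Your treatment of parts (2) and (3) is correct and essentially the same route the paper takes: one verifies that $r_F(z+z_0,w+z_0)$ differs from $r_F(z,w)$ only by the unimodular factor $\eee^{i\Im(z\bar z_0)-i\Im(w\bar z_0)}$ (equivalently, one invokes Lemma~\ref{lem:GAF_transform}(1)), so $|F|^2$ is stationary; part (3) then follows because $F$ and $\widetilde F$ are independent copies.

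Your proof of part (1) contains a genuine error, even though the final numbers come out right. You assert that ``the cross terms vanish'' and that $\partial_x F(0),\partial_y F(0)$ are i.i.d.\ standard complex Gaussian, and you then conclude independence of the two coordinates of $\nabla|F(0)|^2$ on the grounds that ``they involve disjoint, independent batches of Gaussians.'' This premise is false. Since $F(z)=G(z)\eee^{-|z|^2/2}$ with $G$ holomorphic and the real factor $\eee^{-|z|^2/2}$ having vanishing gradient at $0$, one has $\partial_x F(0)=G'(0)=\xi_1$ and $\partial_y F(0)=iG'(0)=i\xi_1$. Hence $\E\bigl[\partial_x F(0)\,\overline{\partial_y F(0)}\bigr]=\E[\xi_1\cdot\overline{i\xi_1}]=-i\neq 0$: the two partials are \emph{perfectly correlated}, not independent, and in your notation $(U_2,W_2)=(-W_1,U_1)$. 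The conclusion nevertheless holds, but only because of an algebraic cancellation that you have to actually perform: the two coordinates are $2(aU_1+bW_1)$ and $2(bU_1-aW_1)$, jointly Gaussian with covariance $4\bigl(ab\cdot\tfrac12-ab\cdot\tfrac12\bigr)=0$, hence independent with variance $2(a^2+b^2)$. This covariance computation is precisely what the paper does: it Taylor-expands $G$ to write $\nabla|F(0)|^2=2\bigl(a\Re\xi_1+b\Im\xi_1,\;b\Re\xi_1-a\Im\xi_1\bigr)$ in terms of the single complex Gaussian $\xi_1$ and reads off independence from the cancelling cross-terms. You should replace the ``disjoint batches'' step with this explicit covariance check.
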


\begin{proof}

At first we validate that the first statement of Lemma \ref{lem:cond dist a+bi}.
Conditioned on
\begin{equation}
\label{eq:g=a+ib}
F(0)=G(0)=a+bi=:g
\end{equation}
the Taylor expansion of $G(z)$ around $z=0$ is
$$G(z)=g+\xi_{1}z+o_{z\rightarrow 0}(z);$$ in fact, the conditional law of $(G(z))_{z\in\C}$ is
that of
\begin{equation*}
(G(z))_{z\in\C}  \stackrel{d}{=} \left(g+\sum\limits_{n=1}^{\infty}\frac{\xi_{n}}{\sqrt{n!}} z^{n}\right)_{z\in\C}
\end{equation*}
We then have
\begin{equation*}
|G(z)|^{2} = (g+\xi_{1}z+o(z))\cdot (\overline{g}+\overline{\xi_{1}}\overline{z}+o(z)) = |g|^{2}+2\Re(\overline{g}\xi_{1}z)+o(z),
\end{equation*}
and since $\eee^{|z^{2}|/2} = 1+o(z)$, we also have
\begin{equation}
\label{eq:|F|^2 gxi1z}
|F(z)|^{2} = |g|^{2}+2\Re(\overline{g}\xi_{1}z)+o(z),
\end{equation}
conditioned on $F(0)=g$. Next,
\begin{equation*}
\Re(\overline{g}\xi_{1}z) = \Re(\overline{g}\xi_{1})x-\Im(\overline{g}\xi_{1})y,
\end{equation*}
and, recalling \eqref{eq:g=a+ib}, this, together with \eqref{eq:|F|^2 gxi1z}, yields
\begin{equation}
\label{eq:nabla |F(z)|^2 origin}
\nabla |F(z)|^{2}|_{z=0} = 2\left(\Re(\overline{g}\xi_{1}), -\Im(\overline{g}\xi_{1})\right)
= 2(a\Re\xi_{1}+b\Im\xi_{1}, b\Re\xi_{1}-a\Im\xi_{1})
\end{equation}
That $\nabla |F(z)|^{2}|_{z=0}$ consists of two independent centred Gaussian random variables of variance $2(a^{2}+b^{2})$ now follows
from the fact that $\Re\xi_{1},\Im\xi_{2}$ are independent centred Gaussian random variables of variance $\frac{1}{2}$.

\vspace{2mm}

We now proceed to proving the second statement of Lemma \ref{lem:cond dist a+bi}, i.e.\ that the process $|F(z)|^{2}$ is {\em stationary}. Let us see what is the effect of applying a translation on $F$. For $z_{0}\in \C$ we
have by \eqref{eq:rF def}:
\begin{equation}
\label{eq:rF transl effect}
\begin{split}
r_{F}(z+z_{0},w+z_{0}) &= \eee^{(z+z_{0})\overline{w+z_{0}}-|z+z_{0}|^{2}/2-|w+z_{0}|^{2}/2} =
\eee^{i\Im(z\overline{z_{0}}-
w\overline{z_{0}})}\cdot r_{F}(z,w) \\&=
\E\left[\eee^{i\Im(z\overline{z_{0}})}F(z)\cdot \overline{\eee^{i\Im(w\overline{z_{0}})}F(w)}   \right],
\end{split}
\end{equation}
since
\begin{equation*}
\begin{split}
&(z+z_{0})\overline{w+z_{0}}-|z+z_{0}|^{2}/2-|w+z_{0}|^{2}/2 \\&= z\overline{w}+|z_{0}|^{2}+z\overline{z_{0}}+z_{0}\overline{w}-
|z|^{2}/2-|z_{0}|^{2}/2-\Re(z\overline{z_{0}}) - |w|^{2}/2-|z_{0}|^{2}/2-\Re(w\overline{z_{0}})
\\&=(z\overline{w}-
|z|^{2}/2 - |w|^{2}/2)+(z\overline{z_{0}}-\Re(z\overline{z_{0}}))+(z_{0}\overline{w}-\Re(w\overline{z_{0}}))
\\&= (z\overline{w}-
|z|^{2}/2 - |w|^{2}/2)+i\Im(z\overline{z_{0}})-i\Im(w\overline{z_{0}}).
\end{split}
\end{equation*}
The identity \eqref{eq:rF transl effect} shows that the law of $F(z_{0}+\cdot)$ is equal to the law of
$$\eee^{i\Im(\cdot \overline{z_{0}})}\cdot F(\cdot),$$ and since
for every $z\in \C$ the pre-factor $\eee^{i\Im(\cdot \overline{z_{0}})}$ is of unit absolute value,
it yields that of $|F(\cdot)|^{2}$ is translation invariant,
i.e.\ $|F(\cdot)|^{2}$ is a stationary (though non-Gaussian) process, i.e.\ the $2$nd statement of Lemma \ref{lem:cond dist a+bi}.
The third statement of Lemma \ref{lem:cond dist a+bi} is a straightforward consequence of the $2$nd one via
\eqref{eq:H(z)=F^2-tild|F|^2}.
\end{proof}

\begin{lemma}
\label{lem:F Re Im var}
Let $z=x+yi\in\C$, and
\begin{equation}
\label{eq:F=A+B}
F(z) = A(z)+iB(z)
\end{equation}
be the random field \eqref{eq:F=G norm},
where $A(z)=\Re F(z)$, $B(z) = \Im F(z)$. Then $A(0)$ (resp. $B(0)$) and its derivatives at $z=0$ w.r.t. $x,y$ of all orders
are Gaussian random variables (with finite variance).
\end{lemma}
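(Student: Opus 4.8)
The plan is to exploit that $F(z) = \eee^{-|z|^2/2}\,G(z)$ is the Gaussian analytic function $G$ multiplied by a deterministic smooth weight, so that every partial derivative of $F$ at the origin is a \emph{finite} linear combination of the complex-Gaussian Taylor coefficients $\xi_0,\xi_1,\dots$ of $G$, and is therefore Gaussian.

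First I would note that the random power series $\sum_{n\ge0}\xi_n z^n/\sqrt{n!}$ has almost surely infinite radius of convergence, so that $G$ is a.s.\ an entire function; writing $z=x+iy$, it follows that $(x,y)\mapsto F(x+iy) = \eee^{-(x^2+y^2)/2}G(x+iy)$ is a.s.\ of class $C^\infty$ on $\R^2$, and its partial derivatives at $0$ of all orders may be computed by the Leibniz rule together with termwise differentiation of the Taylor series of $G$.

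Next, since $G$ is holomorphic, the Cauchy--Riemann equations give $\partial_x G = G'$ and $\partial_y G = iG'$, whence by induction $\partial_x^{a}\partial_y^{b}G = i^{b}G^{(a+b)}$, and in particular $\partial_x^{a}\partial_y^{b}G(0) = i^{b}\sqrt{(a+b)!}\,\xi_{a+b}$. Setting $\phi(x,y):=\eee^{-(x^2+y^2)/2}$ — a smooth function whose partial derivatives at $0$ are finite real numbers — the Leibniz rule applied to $F = \phi\,G$ yields
\[
\partial_x^{a}\partial_y^{b}F(0) = \sum_{0\le a'\le a,\ 0\le b'\le b}\binom{a}{a'}\binom{b}{b'}\bigl(\partial_x^{a-a'}\partial_y^{b-b'}\phi\bigr)(0)\,i^{\,b'}\sqrt{(a'+b')!}\;\xi_{a'+b'},
\]
a finite linear combination, with deterministic finite coefficients, of the jointly complex-Gaussian family $(\xi_k)_{k\ge0}$. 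Hence $\partial_x^{a}\partial_y^{b}F(0)$ is complex Gaussian with finite variance. Finally, since the real linear operators $\partial_x,\partial_y$ commute with $\Re$ and $\Im$, we have $\partial_x^{a}\partial_y^{b}A(0) = \Re\,\partial_x^{a}\partial_y^{b}F(0)$ and $\partial_x^{a}\partial_y^{b}B(0) = \Im\,\partial_x^{a}\partial_y^{b}F(0)$, which are therefore real Gaussian with finite variance (and, in fact, the entire collection over all orders is jointly Gaussian); the choice $a=b=0$ recovers $A(0)$ and $B(0)$ themselves.

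I do not expect a genuine obstacle here. The whole content is the bookkeeping turning the holomorphic jet $G^{(k)}(0)=\sqrt{k!}\,\xi_k$ into the real two-variable jet of $F$ at $0$, combined with the elementary fact that a finite linear combination of jointly Gaussian variables is Gaussian; the only mildly technical points — the a.s.\ smoothness of $G$, so that termwise differentiation is legitimate, and the finiteness of the deterministic weights $(\partial_x^{j}\partial_y^{k}\phi)(0)$ — are immediate.
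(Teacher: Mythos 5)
Your proof is correct, but it follows a different route from the paper's. The paper's argument is covariance-based: it records that $A = \Re F$ and $B = \Im F$ are real Gaussian fields whose covariance function $r_A(z,w) = r_B(z,w) = \tfrac12\Re\bigl(\eee^{z\bar w - |z|^2/2 - |w|^2/2}\bigr)$ is real-analytic, and then invokes the standard fact that for a Gaussian field with smooth covariance, partial derivatives of any order exist (in $L^2$ and pathwise), are Gaussian, and have variance given by the corresponding mixed derivative of the covariance at the diagonal --- so finiteness is automatic. Your argument is instead constructive: using the a.s.\ entire representation $G(z) = \sum_n \xi_n z^n/\sqrt{n!}$, the Cauchy--Riemann identity $\partial_x^a\partial_y^b G = i^b G^{(a+b)}$, and the Leibniz rule applied to $F = \phi G$ with $\phi = \eee^{-|z|^2/2}$, you exhibit each $\partial_x^a\partial_y^b F(0)$ explicitly as a finite linear combination with deterministic finite coefficients of the jointly complex-Gaussian coefficients $(\xi_k)$, then take real and imaginary parts. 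What the paper's route buys is brevity and the ability to read off the variance of every derivative directly from \eqref{eq:rA,rB covar}, which is useful elsewhere in the paper; what yours buys is self-containedness and an explicit identification of the derivatives as concrete linear functionals of the $\xi_k$, without appealing to the general theory of mean-square differentiation of Gaussian fields. Both are valid; the only minor omission on your side, which you in fact address, is the legitimacy of termwise differentiation, justified by the a.s.\ entirety of $G$.
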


\begin{proof}

Recall that the covariance function of $F$ is given by \eqref{eq:rF def}; this determines the random field $F$,
and could be used for evaluating all the local expressions like in Lemma \ref{lem:F Re Im var}. We have that
\begin{equation}
\label{eq:rA,rB covar}
r_{A}(z,w):=\E[A(z)\cdot A(w)] = r_{B}(z,w):=\E[B(z)\cdot B(w)] = \frac{1}{2}\Re r_{F}(z)=\frac{1}{2}\Re(\eee^{z\overline{w}-|z|^{2}/2-|w|^{2}/2}),
\end{equation}
From here the finiteness of all the relevant variances is obvious.
\end{proof}

\begin{proof}[Proof of Proposition \ref{prop:sm do small}]

Recall that $H$ is given by \eqref{eq:H(z)=F^2-tild|F|^2}, where $F$ is defined in \eqref{eq:F=G norm} and $\widetilde{F}$ is its independent copy.
Let $\nod_{\delta-sm}(H;R)$ be the number of $\delta$-small nodal components of $H$ lying entirely inside the radius $R$ centred disc $D(0;R)$, i.e.\ those adjacent to domains of area $<\delta$.
Our ultimate goal is proving the estimate \eqref{eq:exp(delta-sm)<=Cdelta^c0R^2 inf}, i.e.\
$$
\E\left[\nod_{\delta-sm}(H;R)\right] \le C\delta^{c_{0}}\cdot R^{2}
$$
for all $R>1$ and $\delta>0$.
There  exist numbers $q>0$, $0<\epsilon<1$, $s>0$, $c_{0} >0$ and a constant $C_{0}>0$
so that the following deterministic inequality holds~\cite{nazarov_sodin,sarnak_wigman} (cf. \eqref{eq:small dom ineq det} below):
\begin{equation}
\label{eq:delta sm bnd det}
\nod_{\delta-sm}(H;R)\le C_{0} \delta^{c_{0}} \left( \int\limits_{D(0;R)} |\partial^{2} H |^{q}dz\right)^{\frac{s}{s+1}} \cdot
\left( \int\limits_{D(0;R)} |H|^{-(1-\epsilon)}\cdot \| \nabla H\|^{-(2-\epsilon)} \dd z\right)^{\frac{1}{s+1}},
\end{equation}
where we denoted $$|\partial^{2} H(z) | :=
\max\limits_{|\alpha|=2}|\partial^{\alpha}H(z)| = \max\left\{|\partial_{x}\partial_{x}H(z)|,
|\partial_{x}\partial_{y}H(z)|,|\partial_{y}\partial_{y}H(z)| \right\}.$$
Taking the expectation of both sides of \eqref{eq:delta sm bnd det} and using the H\"{o}lder inequality, we obtain
\begin{align}
\lefteqn{\E\left[\nod_{\delta-sm}(H;R)\right]}\label{eq:small dom est mom}
\\ &\ll
\delta^{c_{0}}\cdot \left( \E\left[\int\limits_{D(0;R)}|\partial^{2} H |^{q} \dd z\right]\right)^{\frac{s}{s+1}}
\cdot \left( \E\left[\int\limits_{D(0;R)}|H|^{-(1-\epsilon)}\cdot \| \nabla H\|^{-(2-\epsilon)}dz\right]\right)^{\frac{1}{s+1}}\notag
\\&= \delta^{c_{0}}\cdot \left( \int\limits_{D(0;R)}\E\left[|\partial^{2} H(z) |^{q}\right] dz \right)^{\frac{s}{s+1}}
\cdot \left( \int\limits_{D(0;R)}\E\left[|H(z)|^{-(1-\epsilon)}\cdot \| \nabla H(z)\|^{-(2-\epsilon)}\right]dz\right)^{\frac{1}{s+1}}.\notag
\end{align}

First, by the stationarity of $H(z)$ (Lemma \ref{lem:cond dist a+bi}, part $3$), both integrands
\begin{equation}
\label{eq:int pos mom gen}
\E\left[|\partial^{2} H(z) |^{q}\right] \equiv \E\left[|\partial^{2} H(0) |^{q}\right]
\end{equation}
and
\begin{equation}
\label{eq:int neg mom gen}
\E\left[|H(z)|^{-(1-\epsilon)}\cdot  \| \nabla H(z)\|^{-(2-\epsilon)}\right] \equiv \E\left[|H(0)|^{-(1-\epsilon)} \cdot \| \nabla H(0)\|^{-(2-\epsilon)}\right]
\end{equation}
are constant, independent of $z$. The estimate \eqref{eq:exp(delta-sm)<=Cdelta^c0R^2 inf} will follow
from \eqref{eq:small dom est mom} once we establish that both
\begin{equation}
\label{eq:int pos mom}
\E\left[|\partial^{2} H(0) |^{q}\right] < \infty
\end{equation}
and
\begin{equation}
\label{eq:int neg mom}
\E\left[|H(0)|^{-(1-\epsilon)} \cdot \| \nabla H(0)\|^{-(2-\epsilon)}\right] < \infty
\end{equation}
are {\em finite}.

First, to establish \eqref{eq:int pos mom}, by \eqref{eq:H(z)=F^2-tild|F|^2} the l.h.s. of \eqref{eq:int pos mom} involves (positive)
moments of the real and imaginary parts of $F,\widetilde{F}$ and its couple of derivatives at the origin,
and these are finite by Lemma \ref{lem:F Re Im var}.
Concerning \eqref{eq:int neg mom}, we notice that, unlike
the Gaussian stationary case that were treated in the earlier manuscripts, $H(0)$ and $\nabla H(0)$ (or, more generally,
$H(z)$ and $\nabla H(z)$) are {\em not independent},
and so the moments of the r.h.s.\ of \eqref{eq:int neg mom} do not split nicely as the did in other cases.
Instead we are going to condition on the values of $F(0)$ and $\widetilde{F}(0)$
(and hence of $H(0)$ via \eqref{eq:H(z)=F^2-tild|F|^2}), and invoke Lemma \ref{lem:cond dist a+bi}.

We write
\begin{equation*}
%\label{eq:nabla H quad}
\nabla H (0) = \nabla (|F(0)|^{2})-\nabla (|\widetilde{F}(0)|^{2}),
\end{equation*}
and take $v=(a,b)\in\R^{2}$, $\widetilde{v}=(\widetilde{a},\widetilde{b})\in \R^{2}$, and evaluate
$\E\left[|H(0)|^{-(1-\epsilon)} \| \nabla H(0)\|^{-(2-\epsilon)}\right]$ by first conditioning on
the values $F(0) = a+bi$, $\widetilde{F}(0)=\widetilde{a}+\widetilde{b}i$, and use Lemma \ref{lem:cond dist a+bi}
to infer the conditional distribution of $\nabla H(0)$ to be centred Gaussian with independent components, each having variance
$$2(a^{2}+b^{2}+\widetilde{a}^{2}+\widetilde{b}^{2})=2(\|v\|^{2}+\|\widetilde{v}\|^{2}).$$
As a result, we have
\begin{equation*}
\begin{split}
&\E\left[|H(0)|^{-(1-\epsilon)} \cdot\| \nabla H(0)\|^{-(2-\epsilon)}\right] =
\iint\limits_{\R^{2}\times \R^{2}}\frac{1}{\left|\|v\|^{2}-\|\widetilde{v}\|^{2}\right|^{1-\epsilon}}
\cdot
\frac{\exp(-(\|v\|^{2}+\|\widetilde{v}\|^{2}))dvd\widetilde{v}}{\pi^{2}} \times \\&\times
\int\limits_{\R^{2}}
\frac{1}{\left(\|v\|^{2}+\|\widetilde{v}\|^{2}\right)^{1-\epsilon/2}\cdot 2^{1-\epsilon/2}
\|\eta\|^{2-\epsilon}}
\frac{\exp(-\|\eta\|^{2}/2)d\eta}{(2\pi)},
\end{split}
\end{equation*}
with the natural scaling $$\eta=(\eta_{1},\eta_{2}):=
\frac{1}{\sqrt{2}(\|v\|^{2}+\|\widetilde{v}\|^{2})^{1/2}}\nabla H(z)  ,$$
so that $(\eta_{1},\eta_{2})$ are standard i.i.d. Gaussian.

Continuing, we have
\begin{equation}
\label{eq:int neg mom red}
\begin{split}
&\E\left[|H(0)|^{1-\epsilon} \| \cdot\nabla H(0)\|^{-(2-\epsilon)}\right] \ll
\iint\limits_{\R^{2}\times \R^{2}}\frac{1}
{\left|\|v\|^{2}-\|\widetilde{v}\|^{2}\right|^{1-\epsilon} \cdot \left(\|v\|^{2}+\|\widetilde{v}\|^{2}\right)^{1-\epsilon/2}}
\times \\&\times
\exp(-(\|v\|^{2}+\|\widetilde{v}\|^{2}))dvd\widetilde{v}
\int\limits_{\R^{2}}
\frac{\exp(-\|\eta\|^{2}/2)}{\|\eta\|^{2-\epsilon}}
d\eta
\\& =
\iint\limits_{\R^{2}\times \R^{2}}\frac{\left|\|v\|^{2}-\|\widetilde{v}\|^{2}\right|^{\epsilon/2}}
{\left|\|v\|^{4}-\|\widetilde{v}\|^{4}\right|^{1-\epsilon/2}}
\exp(-(\|v\|^{2}+\|\widetilde{v}\|^{2}))dvd\widetilde{v} \cdot
\int\limits_{\R^{2}}
\frac{\exp(-\|\eta\|^{2}/2)}{\|\eta\|^{2-\epsilon}}
d\eta.
\end{split}
\end{equation}
The finiteness of the expectation \eqref{eq:int neg mom} will follow from \eqref{eq:int neg mom red} once we show that the integral
\begin{equation*}
\iint\limits_{\R^{2}\times \R^{2}}\frac{\left|\|v\|^{2}-\|\widetilde{v}\|^{2}\right|^{\epsilon/2}}
{\left|\|v\|^{4}-\|\widetilde{v}\|^{4}\right|^{1-\epsilon/2}}
\exp(-(\|v\|^{2}+\|\widetilde{v}\|^{2}))dvd\widetilde{v} < \infty
\end{equation*}
is finite, which, in turn, would follow from the convergence of the integral
\begin{equation}
\label{eq:int 1/v^4-tild v^4}
\iint\limits_{\R^{2}\times \R^{2}}\frac{dvd\widetilde{v}}
{\left|\|v\|^{4}-\|\widetilde{v}\|^{4}\right|^{1-\epsilon/2}} \exp\left(-\frac{1}{2}(\|v\|^{2}+\|\widetilde{v}\|^{2})\right)
 < \infty.
\end{equation}
To show that the integral on the r.h.s.\ of \eqref{eq:int 1/v^4-tild v^4} is convergent, we use the slightly
non-standard spherical coordinates
\begin{equation}
\label{eq:spher sep coord}
\begin{split}
(\rho,\theta,\phi_{1},\phi_{2}) \mapsto (v,\widetilde{v})=\big((&\rho\cos(\theta)\cos(\phi_{1}),\rho\cos(\theta)\sin(\phi_{1})),
\\&(\rho\sin(\theta)\cos(\phi_{2}),\rho\sin(\theta)\sin(\phi_{2}))\big),
\end{split}
\end{equation}
whose Jacobian is
\begin{equation*}
J^{*}(\rho,\theta,\phi_{1},\phi_{2}) = \rho^{3}\cdot J(\theta,\phi_{1},\phi_{2}),
\end{equation*}
for some explicit function $J$; $J$ continuous, and, in particular, bounded on $[0,\pi/2]\times [0,2\pi]^{2}$.
Using that, by \eqref{eq:spher sep coord}, we have $\|v\|^{2} = \rho^{2}\cos(\theta)^{2}$, $\|\widetilde{v}\|^{2} = \rho^{2}\sin(\theta)^{2}$,
one may rewrite the integral on the r.h.s.\ of \eqref{eq:int 1/v^4-tild v^4} as
\begin{equation*}
\begin{split}
&\iint\limits_{\R^{2}\times \R^{2}}\frac{dvd\widetilde{v}}
{\left|\|v\|^{4}-\|\widetilde{v}\|^{4}\right|^{1-\epsilon/2}} \exp\left(-\frac{1}{2}(\|v\|^{2}+\|\widetilde{v}\|^{2})\right)
\\&= \iint\limits_{(0,\infty)\times [0,\pi/2]\times [0,2\pi]^{2}}\frac{\rho^{3}|J(\theta,\phi_{1},\phi_{2})|d\theta d\phi_{1}d\phi_{2}}
{\rho^{4-2\epsilon}\left|\cos(\theta)^{4}-\sin(\theta)^{4}\right|^{1-\epsilon/2}} \exp(-\rho^{2}/2)d\rho
\\&= \int\limits_{0}^{\infty}\frac{1}{\rho^{1-2\epsilon}}\eee^{-\rho^{2}/2}d\rho \times \int\limits_{[0,\pi/2]\times [0,2\pi]^{2}}
\frac{|J(\theta,\phi_{1},\phi_{2})|d\theta d\phi_{1}d\phi_{2}}{|\cos(2\theta)|^{1-\epsilon/2}} < \infty,
\end{split}
\end{equation*}
is finite, by Taylor expanding $$\cos(2\theta)\gg \theta-\frac{\pi}{4}$$ around the pole $\theta=\pi/4$ in the relevant range;
that is, \eqref{eq:int neg mom} is now established.
Consolidating all of our arguments, we insert the newly proven \eqref{eq:int neg mom} teamed with the earlier estimate \eqref{eq:int pos mom},
upon bearing in mind \eqref{eq:int neg mom gen} and \eqref{eq:int pos mom gen} respectively, into \eqref{eq:small dom est mom},
finally implies the statement \eqref{eq:exp(delta-sm)<=Cdelta^c0R^2 inf} of Proposition \ref{prop:sm do small}.
\end{proof}

\subsection{Small components of \texorpdfstring{$\Psi_n$}{Psin}: Proof of Proposition~\ref{prop:sm do small_finite_deg}}
We start by recalling the necessary notation.  Let $p_{n}(z),{q}_{n}(z):\C\rightarrow\C$ be the two independent complex valued random polynomials as in~\eqref{eq:def_p_n}. Their covariance is given by
\begin{equation*}
r_{p_{n}}(z,w) = (1+z\overline{w})^{n}, \qquad z,w\in\C.
\end{equation*}
Further, let $P_{n},Q_{n}:\bS^{2}\rightarrow\C\cup\{\infty\}$ be two independent complex valued,
random Gaussian fields on the unit sphere $\bS^2$ defined by $P_{n}(x) = p_{n}(\zeta(x))$, $Q_{n}(x)=p_{n}(\zeta(x))$, where $x=(u,v,w)\in\bS^{2}$,
\begin{equation}
\label{eq:zeta stereo def}
\zeta(x) = \frac{u+iv}{1-w}\in \C\cup \{\infty\},
\end{equation}
is the stereographic projection. The covariance function of $P_{n}$ is
\begin{equation*}
r_{P_{n}}(x,y) = r_{p_{n}}(\zeta(x),\zeta(y)) = \left(1+\zeta(x)\overline{\zeta(y)}\right)^{n}.
\end{equation*}
We record here that in the spherical coordinates $(\theta,\phi)$ for $x\in \bS^2$,
the stereographic projection $\zeta$ in \eqref{eq:zeta ster proj def} is given by
\begin{equation}
\label{eq:zeta stereo spher coord}
\zeta(x) =\frac{1}{\tan(\theta/2)}\eee^{i\phi}.
\end{equation}
The central object of our study is the (non-Gaussian) ensemble $\{\Psi_{n}\}_{n\ge 1}$ of complex-valued functions
\begin{equation}
\label{eq:Psin def1}
\Psi_{n}(x) = \frac{P_{n}(x)}{Q_{n}(x)},
\end{equation}
defined on the sphere
$\Psi_{n}:\bS^{2}\rightarrow\C$.

\begin{lemma}
For every $n\ge 1$, the random field $\Psi_{n}$ as in \eqref{eq:Psin def1} is rotation invariant, i.e.\
for every $g\in SO(3)$, we have
\begin{equation*}
(\Psi_{n}(g x))_{x\in \bS^2} \eqdistr (\Psi_{n}(x))_{x\in \bS^2}.
\end{equation*}
\end{lemma}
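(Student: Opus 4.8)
The plan is to follow the template of part~(3) of Lemma~\ref{lem:GAF_transform}, transferring the rotational invariance on $\bS^2$ to a statement about how the spherical polynomials $p_n,q_n$ behave under the fractional-linear maps that $\mathrm{SO}(3)$ induces on $\C\cup\{\infty\}$ through stereographic projection. Fix $g\in\mathrm{SO}(3)$. As recalled in the proof of Lemma~\ref{lem:GAF_transform}, the conjugated map $\varphi:=\zeta\, g\, \zeta^{-1}:\C\cup\{\infty\}\to\C\cup\{\infty\}$ is a fractional-linear transformation of the special form
\[
\varphi(z)=\frac{\lambda z+\mu}{-\bar\mu z+\bar\lambda},\qquad \lambda,\mu\in\C,\quad |\lambda|^2+|\mu|^2=1,
\]
and $\zeta(gx)=\varphi(\zeta(x))$, whence $\Psi_n(gx)=p_n(\varphi(\zeta(x)))/q_n(\varphi(\zeta(x)))$ for every realization.

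The key algebraic step, which I expect to be the only point requiring a little care, is the action of $\varphi$ on the covariance kernel $(1+z\bar w)^n$ of $p_n$. Writing $c(z):=-\bar\mu z+\bar\lambda$, a short computation using $|\lambda|^2+|\mu|^2=1$ gives $1+\varphi(z)\overline{\varphi(w)}=(1+z\bar w)/(c(z)\,\overline{c(w)})$, hence
\[
\bigl(1+\varphi(z)\overline{\varphi(w)}\bigr)^n=\frac{(1+z\bar w)^n}{c(z)^n\,\overline{c(w)}^{\,n}}.
\]
Now consider the transformed polynomial $\tilde p_n(z):=c(z)^n\,p_n(\varphi(z))=\sum_{k=0}^n\xi_k\sqrt{\binom nk}(\lambda z+\mu)^k(-\bar\mu z+\bar\lambda)^{n-k}$, which is manifestly a genuine polynomial in $z$ of degree $\le n$ (so that the apparent singularity of $p_n(\varphi(\cdot))$ at the pole of $\varphi$ is not a concern). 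It is a centred complex Gaussian field with $\E[\tilde p_n(z)\tilde p_n(w)]=0$ and
\[
\E\bigl[\tilde p_n(z)\overline{\tilde p_n(w)}\bigr]=c(z)^n\,\overline{c(w)}^{\,n}\bigl(1+\varphi(z)\overline{\varphi(w)}\bigr)^n=(1+z\bar w)^n=r_{p_n}(z,w).
\]
Since a centred complex Gaussian field is determined by these two covariance functions, $\tilde p_n\eqdistr p_n$; likewise $\tilde q_n:=c(\cdot)^n q_n(\varphi(\cdot))\eqdistr q_n$.

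It then remains to assemble the pieces. The field $\tilde p_n$ is a deterministic functional of $p_n$ alone and $\tilde q_n$ of $q_n$ alone; since $p_n$ and $q_n$ are independent, the pair $(\tilde p_n,\tilde q_n)$ consists of two independent fields distributed as $p_n$ and $q_n$ respectively, i.e.\ $(\tilde p_n,\tilde q_n)\eqdistr(p_n,q_n)$ jointly. Since $c(\zeta(x))^n$ cancels in the ratio,
\[
\Psi_n(gx)=\frac{c(\zeta(x))^n\,p_n(\varphi(\zeta(x)))}{c(\zeta(x))^n\,q_n(\varphi(\zeta(x)))}=\frac{\tilde p_n(\zeta(x))}{\tilde q_n(\zeta(x))},
\]
and applying the fixed measurable map $(f_1,f_2)\mapsto\bigl(x\mapsto f_1(\zeta(x))/f_2(\zeta(x))\bigr)$ to both $(\tilde p_n,\tilde q_n)$ and $(p_n,q_n)$ yields $(\Psi_n(gx))_{x\in\bS^2}\eqdistr(\Psi_n(x))_{x\in\bS^2}$, which is the assertion. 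No serious obstacle is expected: one only has to identify $c(z)^n p_n(\varphi(z))$ with an honest polynomial (so that the distributional identity holds uniformly on $\bS^2$, poles of $\Psi_n$ included) and to keep track of the unitary normalization $|\lambda|^2+|\mu|^2=1$ that makes the covariance identity close.
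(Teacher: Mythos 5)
Your proposal is correct and follows essentially the same route as the paper's own proof: both reduce the claim to the covariance identity $1+\varphi(z)\overline{\varphi(w)}=(1+z\bar w)/\bigl((-\bar\mu z+\bar\lambda)(-\mu\bar w+\lambda)\bigr)$ for the Möbius map $\varphi=\zeta g\zeta^{-1}$, deduce from it that $P_n(g\cdot)\eqdistr P_n(\cdot)/(-\bar\mu\zeta(\cdot)+\bar\lambda)^n$ (and likewise for $Q_n$), and observe that the normalizing factor cancels in the ratio. Your added remarks — that $c(z)^n p_n(\varphi(z))$ is an honest polynomial so the pole is harmless, and that the joint distributional identity of the pair $(\tilde p_n,\tilde q_n)$ follows from independence because each is a deterministic functional of $p_n$ or $q_n$ alone — are small clarifications of steps the paper leaves implicit, not a different argument.
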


\begin{proof}
First let us consider the effect of a rotation $g\in SO(3)$ on $P_{n}$.
It is known that
\begin{equation*}
\zeta g \zeta^{-1}(z) = \frac{\lambda z +\mu}{-\overline{\mu}z+\overline{\lambda}}
\end{equation*}
for some $\mu,\lambda\in\C$ with  $|\lambda|^{2}+|\mu|^{2}=1$.
Letting $z=\zeta (x)$, $w=\zeta (y)$, we have
\begin{equation*}
r_{P_{n}}(gx,gy) = r_{P_{n}}(g\zeta^{-1}(z),g\zeta^{-1}(w)) =  r_{p_{n}}(\zeta g\zeta^{-1}(z),\zeta g\zeta^{-1}(w))
= \left(1+ \zeta g\zeta^{-1}(z) \cdot \overline{\zeta g\zeta^{-1}(w)}  \right)^{n}.
\end{equation*}
Now,
\begin{equation*}
\begin{split}
1+ \zeta g\zeta^{-1}(z) \cdot \overline{\zeta g\zeta^{-1}(w)} &=
1+\frac{\lambda z +\mu}{-\overline{\mu}z+\overline{\lambda}} \cdot
\frac{\overline{\lambda}\overline{w} +\overline{\mu}}{-\mu\overline{w}+\lambda}
= 1+\frac{|\lambda |^{2}z\overline{w}+\lambda\overline{\mu}z + \overline{\lambda}\mu\overline{w}+|\mu|^{2}}{(-\overline{\mu}z+\overline{\lambda})\cdot (-\mu\overline{w}+\lambda)}
\\&=1+\frac{(1-|\mu |^{2})z\overline{w}+\lambda\overline{\mu}z + \overline{\lambda}\mu\overline{w}+(1-|\lambda|^{2})}{(-\overline{\mu}z+\overline{\lambda})\cdot (-\mu\overline{w}+\lambda)}
\\&= 1+\frac{(1+z\overline{w})-\left(|\mu |^{2}z\overline{w}-\lambda\overline{\mu}z - \overline{\lambda}\mu\overline{w}+|\lambda|^{2}\right)}{(-\overline{\mu}z+\overline{\lambda})\cdot (-\mu\overline{w}+\lambda)}
\\&= \frac{1+z\overline{w}}{(-\overline{\mu}z+\overline{\lambda})\cdot (-\mu\overline{w}+\lambda)}.
\end{split}
\end{equation*}
Therefore,
\begin{equation}
\label{eq:rPn effect rot}
r_{P_{n}}(gx,gy) = \frac{r_{P_{n}}(x,y)}{(-\overline{\mu}z+\overline{\lambda})^{n}\cdot (-\mu\overline{w}+\lambda)^{n}}
\end{equation}
or, equivalently,
\begin{equation*}
%\label{eq:Pn(gz)<->pn(z)}
(P_{n}(gx))_{x\in\bS^2} \eqdistr \left(\frac{P_{n}(x)}{(-\overline{\mu}\zeta (x)+\overline{\lambda})^{n}}\right)_{x\in\bS^2},
\end{equation*}
so that
\begin{equation*}
\Psi_{n}(gx) = \frac{P_{n}(gx)}{Q_{n}(gx )} \eqdistr \frac{P_{n}(x)}{(-\overline{\mu}\zeta (x)+\overline{\lambda})^{n}} \cdot
\frac{(-\overline{\mu}\zeta (x)+\overline{\lambda})^{n}}{Q_{n}(x)} = \frac{P_{n}(x)}{Q_{n}(x)} = \Psi_{n}(x).
\end{equation*}
\end{proof}

The lemniscate $\Gamma_n = \{ x\in\bS^{2}: |\Psi_{n}(x)| = 1\}$
%\begin{equation}
%\label{eq:lemn Psin def}
%\{ x\in\bS^{2}: |\Psi_{n}(x)| = 1\}
%\end{equation}
is equal to the zero set (nodal line) of
\begin{equation}
\label{eq:Hn* def}
H_{n}^{*}(x):= |P_{n}(x)|^{2}- |Q_{n}(x)|^{2}.
\end{equation}
In turn, the nodal line of $H_{n}^{*}(x)$ is equal to the nodal line of
the {\em normalised} random field
\begin{equation}
\label{eq:Hn norm def}
H_{n}(x):= \frac{1}{r_{P_{n}}(x,x)}H_{n}^{*}(x) = |F_{n}(x)|^{2}-|\widetilde{F_{n}}(x)|^{2} ,
\end{equation}
where
\begin{equation}
\label{eq:Fn norm def}
F_{n}(x) = \frac{P_{n}(x)}{\sqrt{r_{P_{n}}(x,x)}}
\end{equation}
is unit variance and $\widetilde{F_{n}}$ is its independent copy, so that
\begin{equation*}
r_{F_{n}}(x,y)=r_{\widetilde{F_{n}}}(x,y) = \frac{r_{P_{n}}(x,y)}{\sqrt{r_{P_{n}}(x,x)\cdot r_{P_{n}}(y,y)}}
= \frac{(1+\zeta(x)\cdot\overline{\zeta(y)})^{n}}{(1+|\zeta(x)|^{2})^{n/2}\cdot (1+|\zeta(y)|^{2})^{n/2}}.
\end{equation*}
Hence bounding the number of small components of the lemniscate $\Gamma_n$ is equivalent to bounding the number of
small nodal components of $H_{n}$.

\begin{lemma}

\begin{enumerate}

\item

The law
of $|F_{n}|^{2}$, with $F_{n}$ as in \eqref{eq:Fn norm def}, is rotation invariant, i.e.\
for every $g\in SO(3)$, we have
\begin{equation}
\label{eq:|Fn|^2 invar}
(|F_{n}(gx)|^{2})_{x\in \bS^2} \eqdistr (|F_{n}(x)|^{2})_{x\in\bS^2}.
\end{equation}

\item
The law of $H_{n}$ as in \eqref{eq:Hn norm def}, is s rotation invariant, i.e.\
for every $g\in SO(3)$, we have
\begin{equation}
\label{eq:Hn invar}
(H_{n}(g x))_{x\in \bS^2} \eqdistr (H_{n}(x))_{x\in \bS^2}.
\end{equation}

\end{enumerate}

\end{lemma}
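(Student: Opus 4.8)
The plan is to recycle the covariance computation already performed in the proof of the rotation invariance of $\Psi_n$. That computation produced the transformation rule \eqref{eq:rPn effect rot}; writing $z=\zeta(x)$ and introducing the deterministic factor $\chi_g(x):=(-\overline{\mu}\zeta(x)+\overline{\lambda})^{-n}$, the displayed distributional identity immediately following \eqref{eq:rPn effect rot} reads $(P_n(gx))_{x\in\bS^2}\eqdistr(\chi_g(x)\,P_n(x))_{x\in\bS^2}$, and the special case $y=x$ of \eqref{eq:rPn effect rot} is the deterministic identity $r_{P_n}(gx,gx)=|\chi_g(x)|^2\,r_{P_n}(x,x)$. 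These two facts are all that is needed.

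For part (1), substitute them into $|F_n(x)|^2=|P_n(x)|^2/r_{P_n}(x,x)$ coming from \eqref{eq:Fn norm def}: replacing $P_n(gx)$ by $\chi_g(x)P_n(x)$ multiplies the numerator by $|\chi_g(x)|^2$, and this is cancelled exactly by the $|\chi_g(x)|^2$ that $r_{P_n}(x,x)$ acquires under $x\mapsto gx$, so $(|F_n(gx)|^2)_{x}\eqdistr(|F_n(x)|^2)_{x}$, which is \eqref{eq:|Fn|^2 invar}. For part (2), since $P_n$ and $Q_n$ are independent and have the same law, the pair transforms jointly with one and the same factor, $(P_n(gx),Q_n(gx))_{x}\eqdistr(\chi_g(x)P_n(x),\chi_g(x)Q_n(x))_{x}$, and $r_{Q_n}=r_{P_n}$; plugging this into $H_n(x)=(|P_n(x)|^2-|Q_n(x)|^2)/r_{P_n}(x,x)$ from \eqref{eq:Hn* def}--\eqref{eq:Hn norm def}, the common factor $|\chi_g(x)|^2$ again cancels and \eqref{eq:Hn invar} follows. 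Equivalently, \eqref{eq:Hn invar} is deduced from \eqref{eq:|Fn|^2 invar} applied to the independent pair $(F_n,\widetilde{F_n})$, in the same way that the third part of Lemma~\ref{lem:cond dist a+bi} was deduced from its second part.

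I do not anticipate any real obstacle: the whole content is bookkeeping of how the deterministic normalising factor $r_{P_n}(x,x)$ interacts with the multiplicative cocycle $\chi_g$ induced by the rotation, and the cancellation is forced because $|F_n|^2$ and $H_n$ are ratios whose numerator and denominator rescale by the same amount. The only point requiring a word of care is that $\chi_g$ has a pole and that $\zeta$ is $\infty$-valued at the north pole, so the displayed distributional equalities should be read on the dense open subset of $\bS^2$ on which all expressions are finite; since removing finitely many points changes neither $\Gamma_n$ nor the number of its connected components, this is harmless, and it is dealt with exactly as in the preceding proof of the rotation invariance of $\Psi_n$.
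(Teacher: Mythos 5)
Your proof is correct and takes essentially the same route as the paper: the paper verifies \eqref{eq:|Fn|^2 invar} by computing $r_{F_n}(gx,gy)$ directly via \eqref{eq:rPn effect rot} and observing that the resulting prefactor has unit modulus, while you perform the equivalent bookkeeping at the level of $P_n$ and the normalising factor $r_{P_n}(x,x)$, letting $|\chi_g|^2$ cancel in the ratio; the deduction of \eqref{eq:Hn invar} from \eqref{eq:|Fn|^2 invar} via independence is also exactly what the paper does.
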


\begin{proof}

It is evident that \eqref{eq:Hn invar} is a straightforward consequence of \eqref{eq:|Fn|^2 invar} via \eqref{eq:Hn norm def}
(and the independence of $F_{n}$ and $\widetilde{F_{n}}$), so in what follows
we only restrict ourselves to proving \eqref{eq:|Fn|^2 invar}.
To this end let us check what effect rotating by $g$ has on the law of $F_{n}$.
Letting $z=\zeta(x)$, $w=\zeta(y)$, and using \eqref{eq:rPn effect rot} once again, we have
\begin{equation*}
\begin{split}
r_{F_{n}}(gx,gy) &= \frac{r_{P_{n}}(gx,gy)}{\sqrt{r_{P_{n}}(gx,gx)\cdot r_{P_{n}}(gy,gy)}} \\&=
\frac{r_{P_{n}}(x,y)}{(-\overline{\mu}z+\overline{\lambda})^{n}\cdot (-\mu\overline{w}+\lambda)^{n}}  \cdot
\frac{|\overline{\lambda}-\overline{\mu}z|^{n}\cdot |\overline{\lambda}-\overline{\mu}w|^{n}}{\sqrt{r_{P_{n}}(x,x)\cdot r_{P_{n}}(y,y)}}
\\&= \frac{r_{P_{n}}(x,y)}{\sqrt{r_{P_{n}}(x,x)\cdot r_{P_{n}}(y,y)}} \cdot \frac{|\overline{\lambda}-\overline{\mu}z|^{n}\cdot |\overline{\lambda}-\overline{\mu}w|^{n}}{(-\overline{\mu}z+\overline{\lambda})^{n}\cdot (-\mu\overline{w}+\lambda)^{n}} \\&=
\frac{|\overline{\lambda}-\overline{\mu}z|^{n}\cdot |\overline{\lambda}-\overline{\mu}w|^{n}}{(-\overline{\mu}z+\overline{\lambda})^{n}\cdot (-\mu\overline{w}+\lambda)^{n}}\cdot r_{F_{n}}(x,y).
\end{split}
\end{equation*}
Noting that the pre-factor $$\frac{|\overline{\lambda}-\overline{\mu}z|^{n}\cdot |\overline{\lambda}-\overline{\mu}w|^{n}}{(-\overline{\mu}z+\overline{\lambda})\cdot (-\mu\overline{w}+\lambda)}$$
is of unit absolute value, it follows that the non-Gaussian random field $|F_{n}(\cdot)|^{2}$ is invariant w.r.t. $g\in SO(3)$.
\end{proof}

For a smooth function $g:\bS^2\rightarrow\R$ denote by $\nod_{\delta-sm}(g; x_0,r)$ the number of $\delta$-small nodal
components of $g$ lying inside the spherical cap $B(x_0;r)$, i.e.\ those adjacent to domains of spherical area $<\delta$. We have the following bound for small components.

\begin{lemma}[\cite{nazarov_sodin,sarnak_wigman}]
There exist numbers $q>2$, $0<\epsilon<1$,
\begin{equation}
\label{eq:s val scal}
s=\frac{3-2\epsilon}{q} >0,
\end{equation}
\begin{equation}
\label{eq:t val scal}
t =(1-\epsilon)\left(1-\frac{1}{q}\right)+(2-\epsilon)\left(\frac{1}{2}-\frac{1}{q} \right) - 1,
\end{equation}
\begin{equation}
\label{eq:c0 exp val scal}
c_{0} = \frac{t}{s+1}
\end{equation}
and $C_{0}>0$
so that for any smooth function $g:\bS^{2}\rightarrow\R$,
any $x_0\in \bS^2$ and $r>0$, the following inequality holds:
\begin{equation}
\label{eq:small dom ineq det}
\begin{split}
\nod_{\delta-sm}(g; x_0,r) &\le C_{0}\delta^{c_{0}} \left(\int\limits_{B(x_0;r)}|\partial^{2}H_{n}|^{q}dz\right)^{\frac{s}{s+1}}
\times\\&\times \left(\int\limits_{B(x_0;r)} | H_{n} |^{-(1-\epsilon)} \cdot \|\nabla H_{n}   \|^{-(2-\epsilon)}  dz \right)^{\frac{1}{s+1}}.
\end{split}
\end{equation}
Here we denoted
\begin{equation*}
|\partial^{2}H_{n}(x)| := \max\limits_{V_{1},V_{2}\in T_{x}(\bS^{2})} |V_{1}V_{2}H_{n}(x)|.
\end{equation*}
\end{lemma}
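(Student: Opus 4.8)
The plan is to transplant to $\bS^{2}$ the deterministic small-domains estimate of Nazarov and Sodin \cite{nazarov_sodin} (in the form used in \cite{sarnak_wigman}). Since the whole argument is local and scale-invariant up to bounded factors, passing to geodesic coordinates around a point of $B(x_0;r)$ distorts the metric, the area element and the quantities $|\nabla(\cdot)|$, $|\partial^{2}(\cdot)|$ only by constants that can be absorbed into $C_0$, so it suffices to prove the inequality (with $g$ in place of $H_n$ in the integrands) for a smooth function $g$ on a planar domain. First I would perform the usual reduction: every $\delta$-small component of $\{g=0\}$ lying inside $B(x_0;r)$ is adjacent to a nodal domain $\omega$ of $g$ with $\operatorname{area}(\omega)<\delta$, and by the standard boundedly-many-to-one matching of components to such domains it is enough to bound the number $N$ of nodal domains of $g$ of area $<\delta$ meeting $B(x_0;r)$.

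Fix one such domain $\omega$, say with $g>0$ on $\omega$ and $g=0$ on $\partial\omega$, and let $p=p_\omega\in\omega$ be a point where $g$ attains its maximum $M_\omega:=g(p)>0$ over $\bar\omega$, so that $\nabla g(p)=0$. The heart of the matter is the following local step, where the hypothesis $q>2$ enters through the Sobolev--Morrey embedding $W^{2,q}\hookrightarrow C^{1}$ in dimension two: on a disc $B(p;\rho)$ one has, with $a(\rho):=(|B(p;\rho)|^{-1}\int_{B(p;\rho)}|\partial^{2}g|^{q})^{1/q}$,
\[
\sup_{B(p;\rho)}|g-M_\omega|\ \lesssim\ \rho^{2}\,a(\rho),\qquad \sup_{B(p;\rho)}|\nabla g|\ \lesssim\ \rho\, a(\rho)
\]
(the first because $\nabla g(p)=0$). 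Choosing $\rho=\rho_\omega$ so that the right side of the first estimate equals $M_\omega/2$, we get $g\ge M_\omega/2$ on $B_\omega:=B(p;\rho_\omega)$, hence $B_\omega\subset\omega$; therefore $\pi\rho_\omega^{2}\le\operatorname{area}(\omega)<\delta$, the discs $\{B_\omega\}$ are pairwise disjoint, and with $a_\omega:=a(\rho_\omega)$ we obtain the two balances $M_\omega\asymp\rho_\omega^{2}a_\omega$ and $M_\omega\lesssim\delta\, a_\omega$. On $B_\omega$ we also have $|g|\asymp M_\omega$ and $|\nabla g|\lesssim\rho_\omega a_\omega$, which give the per-domain estimates
\[
\int_{B_\omega}|\partial^{2}g|^{q}=|B_\omega|\,a_\omega^{q}\ \asymp\ M_\omega\, a_\omega^{q-1},\qquad \int_{B_\omega}|g|^{-(1-\epsilon)}|\nabla g|^{-(2-\epsilon)}\ \gtrsim\ M_\omega^{\,3\epsilon/2-1}\, a_\omega^{\,\epsilon/2-2}.
\]

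To finish, I would sum over the disjoint discs $B_\omega\subset B(x_0;r)$, getting $\sum_\omega M_\omega a_\omega^{q-1}\lesssim\mathcal I_{+}$ and $\sum_\omega M_\omega^{3\epsilon/2-1}a_\omega^{\epsilon/2-2}\lesssim\mathcal I_{-}$, where $\mathcal I_{+}=\int_{B(x_0;r)}|\partial^{2}g|^{q}$ and $\mathcal I_{-}=\int_{B(x_0;r)}|g|^{-(1-\epsilon)}|\nabla g|^{-(2-\epsilon)}$, and then apply H\"older with conjugate exponents $\tfrac{s+1}{s}$ and $s+1$ to $N=\sum_\omega 1=\sum_\omega (M_\omega a_\omega^{q-1})^{s/(s+1)}(M_\omega a_\omega^{q-1})^{-s/(s+1)}$, obtaining
\[
N\ \le\ \Big(\sum_\omega M_\omega a_\omega^{q-1}\Big)^{\frac{s}{s+1}}\Big(\sum_\omega (M_\omega a_\omega^{q-1})^{-s}\Big)^{\frac1{s+1}}.
\]
With $sq=3-2\epsilon$ one checks that $(M_\omega a_\omega^{q-1})^{-s}=(M_\omega/a_\omega)^{t}\cdot M_\omega^{3\epsilon/2-1}a_\omega^{\epsilon/2-2}$, with $t$ as in \eqref{eq:t val scal} (equivalently $t=1-\tfrac{3\epsilon}{2}-s$); since $t>0$ once $q$ is large enough given $\epsilon$, and $M_\omega\lesssim\delta a_\omega$, we get $(M_\omega/a_\omega)^{t}\lesssim\delta^{t}$, hence $\sum_\omega(M_\omega a_\omega^{q-1})^{-s}\lesssim\delta^{t}\mathcal I_{-}$ and therefore $N\lesssim\delta^{t/(s+1)}\mathcal I_{+}^{s/(s+1)}\mathcal I_{-}^{1/(s+1)}=\delta^{c_0}\mathcal I_{+}^{s/(s+1)}\mathcal I_{-}^{1/(s+1)}$ by \eqref{eq:c0 exp val scal}, which is the assertion.

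I expect the main obstacle to be the local step of the second paragraph: one must organize the Sobolev rescaling so that $a_\omega$ is an honest $L^{q}$-average (making $\int_{B_\omega}|\partial^{2}g|^{q}=|B_\omega|a_\omega^{q}$ an exact identity) while still controlling $g$ and $\nabla g$ pointwise on $B_\omega$ by Taylor-type bounds; this is exactly what forces the use of $W^{2,q}\hookrightarrow C^{1}$ and hence the restriction $q>2$. Some care is also needed in the component-to-domain matching (a $\delta$-small component near $\partial B(x_0;r)$, or a thin annular domain, must be charged correctly), but this is routine and costs only constants and a harmless enlargement of the disc absorbed into $C_0$; once these points are settled, the H\"older book-keeping is mechanical and the exact values of $s,t,c_0$ are precisely those making the powers of $M_\omega$ and $a_\omega$ cancel.
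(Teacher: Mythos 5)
The paper does not actually prove this lemma---it is quoted wholesale from the cited references (Nazarov--Sodin's nodal-counting machinery, in the form adapted in Sarnak--Wigman), and no proof environment follows the statement in the source. Your proposal therefore does not ``diverge from the paper's proof''; rather, it reconstructs the argument that the paper outsources. That reconstruction is faithful and, as far as I can check, correct. The component-to-domain reduction, the choice of the interior point with $\nabla g(p)=0$, the Morrey-type bounds $\sup_{B(p;\rho)}|g-M_\omega|\lesssim\rho^2 a(\rho)$ and $\sup_{B(p;\rho)}|\nabla g|\lesssim\rho a(\rho)$ (using $W^{2,q}\hookrightarrow C^1$ in two dimensions, hence $q>2$), the disjointness of the discs $B_\omega\subset\omega$, and the H\"older step are exactly the ingredients in the cited works. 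The exponent bookkeeping also checks out: expanding \eqref{eq:t val scal} indeed gives $t=1-\tfrac{3\epsilon}{2}-s$ with $s=(3-2\epsilon)/q$, and your per-domain estimates $\int_{B_\omega}|\partial^2 g|^q\asymp M_\omega a_\omega^{q-1}$ and $\int_{B_\omega}|g|^{-(1-\epsilon)}|\nabla g|^{-(2-\epsilon)}\gtrsim M_\omega^{3\epsilon/2-1}a_\omega^{\epsilon/2-2}$ combine under H\"older with conjugate exponents $\tfrac{s+1}{s}$, $s+1$ to produce precisely $\delta^{t/(s+1)}=\delta^{c_0}$. You also correctly identify the constraint this imposes: $t>0$ forces $\epsilon<2/3$ and then $q>2(3-2\epsilon)/(2-3\epsilon)$, which is consistent with the lemma's existential quantification over $q$ and $\epsilon$.

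Two small points worth recording. First, the lemma as printed has a typo that you silently corrected: it quantifies over a smooth $g$ but writes $H_n$ in the integrands---these should all be the same arbitrary $g$. Second, the subtlety you flag in your final paragraph (choosing $\rho_\omega$ so that $\rho^2 a(\rho)\asymp M_\omega$ when $\rho\mapsto\rho^2 a(\rho)$ need not be monotone, and the possibility that $B_\omega$ leaks slightly out of $B(x_0;r)$) is genuine, but it is handled in the references by the usual stopping-rule choice of $\rho_\omega$ and by enlarging the ball on the right-hand side at the cost of the constant $C_0$; neither affects the exponent $c_0$.
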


The following result is, in light of
\eqref{eq:Hn norm def}, \eqref{eq:Hn* def}, and \eqref{eq:Psin def1}, a straightforward restatement of
Proposition \ref{prop:sm do small_finite_deg}.

\begin{proposition}
\label{prop:small comp est scal}
There exists a number $C_{1}>0$, so that the expected number of $\delta/n$-small components of $H_{n}$ as in
\eqref{eq:Hn norm def}, with $n$ sufficiently large and
$\delta>0$ sufficiently small, is bounded by
\begin{equation}
\label{eq:small dom ineq det1}
%\E[\nod_{\delta/n-sm}(H_{n})] \le C_{1}\delta^{c_{0}} \cdot n,
%\quad
%\text{ and }
%\quad
\E\left[\nod_{\delta/n-sm}(H_{n}; x_0, R/\sqrt{n})\right] \le C_{1}\delta^{c_{0}} \cdot R^2
\end{equation}
uniformly for all $x_0\in \bS^2$ and $1<R<\sqrt{n}$, with $c_{0}$ same as in \eqref{eq:c0 exp val scal}.
\end{proposition}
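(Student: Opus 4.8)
The plan is to follow the template of the proof of Proposition~\ref{prop:sm do small}. First apply the deterministic bound \eqref{eq:small dom ineq det} to $g=H_{n}$ on the cap $B(x_{0};R/\sqrt n)$ with threshold $\delta/n$ in place of $\delta$; then take expectations and apply H\"older's inequality with conjugate exponents $(s+1)/s$ and $s+1$, exactly as in \eqref{eq:small dom est mom}, to obtain
$$
\E\!\left[\nod_{\delta/n-sm}(H_{n};x_{0},R/\sqrt n)\right]\ll \Big(\tfrac{\delta}{n}\Big)^{c_{0}}\Big(\int_{B(x_{0};R/\sqrt n)}\E\big[|\partial^{2}H_{n}(x)|^{q}\big]\,\dd x\Big)^{\frac{s}{s+1}}\Big(\int_{B(x_{0};R/\sqrt n)}\E\big[|H_{n}(x)|^{-(1-\eps)}\|\nabla H_{n}(x)\|^{-(2-\eps)}\big]\,\dd x\Big)^{\frac{1}{s+1}}.
$$
By the $SO(3)$-invariance of $H_{n}$ established above, both integrands are independent of $x$, so each integral equals $\sigma(B(x_{0};R/\sqrt n))\le\pi R^{2}/n$ times the value of the corresponding expectation at one convenient point; we take the south pole $x_{0}=(0,0,-1)$, where $\zeta(x_{0})=0$ and hence $F_{n}(x_{0})=p_{n}(0)=\xi_{0}$. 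It then remains to prove, uniformly for all sufficiently large $n$, the two pointwise bounds
$$
\E\big[|\partial^{2}H_{n}(x_{0})|^{q}\big]\ll n^{q},\qquad \E\big[|H_{n}(x_{0})|^{-(1-\eps)}\|\nabla H_{n}(x_{0})\|^{-(2-\eps)}\big]\ll n^{-(2-\eps)/2}.
$$
Indeed, substituting these together with $\sigma(B(x_{0};R/\sqrt n))\le\pi R^{2}/n$, one checks that all powers of $n$ cancel — which is precisely how the exponents $s,t,c_{0}$ in \eqref{eq:s val scal}--\eqref{eq:c0 exp val scal} were designed — leaving the asserted $\ll\delta^{c_{0}}R^{2}$.

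For the positive-moment bound I would work in the stereographic coordinate $\zeta$ near $x_{0}$, where the intrinsic gradient and Hessian of $H_{n}$ differ from the Euclidean $\zeta$-derivatives only by the bounded conformal factor $2/(1+|\zeta|^{2})$ (and lower-order Christoffel terms). Since $F_{n}(\zeta)=p_{n}(\zeta)(1+|\zeta|^{2})^{-n/2}$ with $p_{n}(\zeta)=\sum_{k}\xi_{k}\sqrt{\binom nk}\,\zeta^{k}$, and since the first-order $\zeta$-derivatives of $(1+|\zeta|^{2})^{-n/2}$ vanish at $\zeta=0$ while the second-order ones are $O(n)$, the $\zeta$-derivatives of $F_{n}$ at $\zeta=0$ of order $j\le 2$ are $O(n^{j/2})$ times complex Gaussians with variances bounded uniformly in $n$. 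Expanding $H_{n}=|F_{n}|^{2}-|\widetilde F_{n}|^{2}$ ($\widetilde F_{n}$ the independent copy built from the coefficients $\eta_{k}$), every second-order derivative $\partial^{\alpha}H_{n}(x_{0})$, $|\alpha|=2$, is $O(n)$ times a polynomial in finitely many Gaussians of bounded variance, hence $\E[|\partial^{\alpha}H_{n}(x_{0})|^{q}]\ll n^{q}$ with an $n$-independent constant since products of Gaussians have all moments finite; this is the finite-degree counterpart of \eqref{eq:int pos mom}.

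For the negative-moment bound — the heart of the matter — condition on $g:=F_{n}(x_{0})=\xi_{0}$ and $\widetilde g:=\widetilde F_{n}(x_{0})=\eta_{0}$, which are i.i.d.\ standard complex Gaussians, so that $H_{n}(x_{0})=|g|^{2}-|\widetilde g|^{2}$ is determined. Because $\partial_{\zeta}(1+|\zeta|^{2})^{-n/2}|_{0}=\partial_{\bar\zeta}(1+|\zeta|^{2})^{-n/2}|_{0}=0$ and $p_{n}$ is holomorphic, conditionally on $g$ one has $\partial_{\zeta}F_{n}(x_{0})=\sqrt n\,\xi_{1}$ and $\partial_{\bar\zeta}F_{n}(x_{0})=0$, so, exactly as in the first part of Lemma~\ref{lem:cond dist a+bi} but with an extra factor $n$, the conditional law of $\nabla_{\zeta}|F_{n}(x_{0})|^{2}$ given $g$ is that of two independent centred Gaussians of variance $2n|g|^{2}$; together with the independent copy, $\nabla H_{n}(x_{0})$ given $(g,\widetilde g)$ is, up to the fixed conformal factor, two independent centred Gaussians of variance $\asymp n(|g|^{2}+|\widetilde g|^{2})$. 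Integrating out this Gaussian yields the factor $n^{-(2-\eps)/2}\,\E\big[\|\eta\|^{-(2-\eps)}\big]$, finite because $\eps>0$, times the integral
$$
\iint_{\C\times\C}\frac{(|g|^{2}+|\widetilde g|^{2})^{-(2-\eps)/2}}{\big||g|^{2}-|\widetilde g|^{2}\big|^{1-\eps}}\,\eee^{-|g|^{2}-|\widetilde g|^{2}}\,\dd g\,\dd\widetilde g ,
$$
which is precisely the integral shown to converge in the proof of Proposition~\ref{prop:sm do small} via \eqref{eq:int neg mom red} and the separated spherical coordinates \eqref{eq:spher sep coord}, and which is manifestly independent of $n$. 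Plugging both bounds into the H\"older estimate of the first paragraph then completes the proof; the global estimate $\E[\nod_{\delta/n-sm}(\Gamma_{n})]\le C_{1}\delta^{c_{0}}n$ follows by covering $\bS^{2}$ with $O(n)$ caps of radius $\asymp 1/\sqrt n$. The main obstacle — and the reason to evaluate everything at the pole — is keeping the constant in the negative-moment bound independent of $n$: at $x_{0}$ the non-holomorphic normalising factor $(1+|\zeta|^{2})^{-n/2}$ has vanishing first derivative, so the conditional-variance identity is exact for every $n$, and no $n$-dependent error terms need be controlled.
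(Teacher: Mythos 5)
Your proposal is correct and follows the same overall architecture as the paper: take expectations in the deterministic Nazarov--Sodin bound~\eqref{eq:small dom ineq det} with threshold $\delta/n$, apply H\"older with exponents $(s+1)/s$ and $s+1$, use the $SO(3)$-invariance of $H_n$ to collapse both integrals over the cap to a fixed point times $\sigma(B(x_0;R/\sqrt n))\asymp R^2/n$, and then verify the two pointwise moment bounds $\E[|\partial^2H_n|^q]\ll n^q$ and $\E[|H_n|^{-(1-\eps)}\|\nabla H_n\|^{-(2-\eps)}]\ll n^{-1+\eps/2}$, after which the choice of $s,t,c_0$ in~\eqref{eq:s val scal}--\eqref{eq:c0 exp val scal} makes the $n$-powers cancel. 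The one genuine (and welcome) departure from the paper is in how you obtain the conditional gradient law at the evaluation point: you work at the south pole in the stereographic coordinate $\zeta$, where $\zeta(S)=0$, observe that the first derivatives of $(1+|\zeta|^2)^{-n/2}$ vanish and $p_n$ is holomorphic, and so read off the conditional variance $\asymp n(|g|^2+|\widetilde g|^2)$ exactly, with only a bounded conformal factor between $\nabla_\zeta$ and the intrinsic spherical gradient. The paper instead works in geographic coordinates $(\theta,\phi)$, which are singular at $S$, so it must evaluate the relevant quantities at nearby points $x_0$ and take a (somewhat laborious) limit $x_0\to S$ --- a detour the authors themselves flag as ``technically demanding.'' Your route reaches the same conditional Gaussian and the same $n^{-(2-\eps)/2}$ more directly. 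Everything else, including the reduction of the $(g,\widetilde g)$-integral to the one already shown finite in Proposition~\ref{prop:sm do small}, matches the paper.

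One small caveat outside the statement itself: your closing remark that the global bound $\E[\nod_{\delta/n-sm}(\Gamma_n)]\le C_1\delta^{c_0}n$ ``follows by covering $\bS^2$ with $O(n)$ caps of radius $\asymp1/\sqrt n$'' is not quite immediate, since a $\delta/n$-small component is only required to be adjacent to a small-area domain; such a domain (hence the component) could a priori have large diameter and fail to lie inside any single cap of radius $\asymp 1/\sqrt n$. The paper sidesteps this by running the same H\"older estimate with the integrals taken over all of $\bS^2$ rather than over a cap, which is the cleaner way to obtain~\eqref{eq:small dom ineq det1 glob}. This does not affect the correctness of your proof of the proposition itself.
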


Following along the same lines as in the proof below, except for the integrals taken over the whole of $\bS^2$ in place of $B(x_{0};R/\sqrt{n})$, it is possible to prove the following global version of~\eqref{eq:small dom ineq det1}:
\begin{equation}
\label{eq:small dom ineq det1 glob}
\E[\nod_{\delta/n-sm}(H_{n})] \le C_{1}\delta^{c_{0}} \cdot n.
\end{equation}

\begin{proof}
Taking the expectation of both sides of \eqref{eq:small dom ineq det}, putting $r=R/\sqrt n$ and using the H\"older inequality yields:
\begin{equation}
\label{eq:small dom ineq rand}
\begin{split}
\E\left[\nod_{\delta-sm}(H_{n}; x_0,R/\sqrt n)\right]
&\le C_{0}\delta^{c_{0}} \left(\int\limits_{B(x_0;R/\sqrt n)}\E\left[|\partial^{2}H_{n}|^{q}\right]dz\right)^{\frac{s}{s+1}}\times
\\
&\times \left(\int\limits_{B(x_0;R/\sqrt n)} \E\left[| H_{n} |^{-(1-\epsilon)} \|\nabla H_{n}   \|^{-(2-\epsilon)}\right]  dz \right)^{\frac{1}{s+1}}.
\end{split}
\end{equation}
By \eqref{eq:small dom ineq rand} and the rotational invariance of $H_{n}$ as above, it is sufficient to bound each of
the two expectations on the r.h.s.\ of \eqref{eq:small dom ineq rand} at an arbitrary fixed point, e.g.\ the South Pole $x_{0}=S= (0,0,-1)$;
we claim the (finite) bounds
\begin{equation}
\label{eq:pos mom 2nd der bnd scal}
\E\left[|\partial^{2}H_{n}|^{q}\right]dz \ll_{q} n^{q},
\end{equation}
and
\begin{equation}
\label{eq:neg mom 1st mix der bnd scal}
\E\left[| H_{n} |^{-(1-\epsilon)} \|\nabla H_{n}   \|^{-(2-\epsilon)}\right] \ll n^{-1+\epsilon/2}.
\end{equation}
Once \eqref{eq:pos mom 2nd der bnd scal} and \eqref{eq:neg mom 1st mix der bnd scal}
are established, we substitute these into \eqref{eq:small dom ineq rand} with $\delta$ replaced by $\delta/n$,
bearing in mind the rotation invariance, to yield
\begin{equation*}
\E\left[\nod_{\delta/n-sm}(H_{n}; x_0,R/\sqrt n)\right] \ll \delta^{c_{0}} \cdot n^{-c_{0}+\frac{qs}{s+1}-\frac{1-\epsilon/2}{s+1}} \cdot R^2/n
= \delta^{c_{0}} \cdot R^2,
\end{equation*}
i.e. the bound in \eqref{eq:small dom ineq det1} of Proposition \ref{prop:small comp est scal},
as, by our choice of the parameters \eqref{eq:s val scal}, \eqref{eq:t val scal} and \eqref{eq:c0 exp val scal}, we have
\begin{equation*}
-c_{0}+\frac{qs}{s+1}-\frac{1-\epsilon/2}{s+1}=1.
\end{equation*}

First, we turn to proving \eqref{eq:neg mom 1st mix der bnd scal}. It is convenient to evaluate the relevant expressions
for the South Pole $x_{0}=S=(0,0,-1)$, mapping under the stereographic map \eqref{eq:zeta stereo def} to the origin $0\in\C$. However,
as we are going to invoke the spherical coordinates, with $\zeta$ given by \eqref{eq:zeta stereo spher coord},
this is not optimal, as the corresponding orthonormal frame blows up there, posing a difficulty evaluating the gradient
on the l.h.s. of \eqref{eq:neg mom 1st mix der bnd scal}. One way to resolve this is by expressing $$p_{n}(z)=p_{n}(\zeta(x))=P_{n}(x)$$
around $z=0$ on the Euclidean plane, and then, noting that the tangent plane $T_{S}(\bS^{2})$ of the sphere at $S$ is
canonically isomorphic to the plane $\pi_{1}:\{z=-1\}\subseteq\R^{3}$
(so also canonically isomorphic to $\pi_{2}\{z=0\}\cong\R^{2}\cong\C$), use that, as the distances on
$\pi_{1}$ are magnified by a factor of $2$ around $S$ relatively to $\pi_{2}$, we should have
\begin{equation}
\label{eq:grad spher=1/2 grad Eucl}
\nabla_{\sph} |P_{n}(\cdot)|^{2}|_{S} = \frac{1}{2}\nabla_{\text{Eucl}} |p_{n}(\cdot)|^{2}|_{0},
\end{equation}
where we denoted $\nabla_{\sph}$ to be the spherical gradient, and $\nabla_{\text{Eucl}}$ to be the gradient on $\R^{2}$.

However, proving \eqref{eq:grad spher=1/2 grad Eucl} by an explicit computation seems technically demanding.
Instead we are going to study the distribution of
the relevant random variables {\em around} $S$, and then
use the intrinsic definition of the l.h.s. of \eqref{eq:neg mom 1st mix der bnd scal} to infer its law {\em at} the South Pole.
We are interested in the distribution of $\nabla H_{n}(S)$ conditioned on $F_{n}(S)=g=a+ib$ and $\widetilde{F_{n}}(S)=\widetilde{g}=\widetilde{a}+i\widetilde{b}$. Let us consider first the (rotation invariant)
distribution of $\nabla |F_{n}(x_{0})|^{2} := \nabla |F_{n}(x)|^{2} |_{x=x_{0}}$ understood as the gradient on the sphere
of the real valued function $|F_{n}(x)|^{2}:\bS^{2}\rightarrow\R$ conditioned on $F_{n}(x_{0}) = g=a+ib$,
where $x_{0}\in\bS^{2}$ is a point in the neighbourhood of $S$.
First,
\begin{equation}
\label{eq:rPn(Z0,Z0)->1}
r_{P_{n}}(x_{0},x_{0}) = r_{p_{n}}(\zeta(x_{0}),\zeta(x_{0})) = (1 + |\zeta(x_{0})|)^{n},
\end{equation}
so that for $x_{0}$ sufficiently close to $S$, conditioning on $F_{n}(S)=g$ is asymptotic to conditioning on $P_{n}(x_{0})=g$.
Now, let $x=x(\theta,\phi)$ and
$z = \zeta(x) = \zeta(x(\theta,\phi))$
\begin{equation}
\label{eq:pn def}
P_{n}(x) = p_{n}(\zeta(x)) = \sum\limits_{k=0}^{n} \xi_{k}\sqrt{{\binom n k}} z^{k},
\end{equation}
so, conditioned on $p_{n}(\zeta(x_{0}))=P_{n}(x_{0})=g$,
\begin{equation*}
P_{n}(x) = p_{n}(\zeta(x)) = g+\sqrt{n}\widehat{\xi_{1}}(z-z_{0})+  \sum\limits_{k=2}^{n} \widehat{\xi_{n}}\sqrt{{\binom n k}} (z-z_{0})^{k},
\end{equation*}
with all $\{\widehat{\xi_{i}}\}_{1\le i \le n}$ Gaussian, asymptotic to $\xi_{i}$ in \eqref{eq:pn def}, i.e.\ standard Gaussian i.i.d. $z_{0}\rightarrow 0$ (equivalent to $x_{0}\rightarrow S$.
Now
\begin{equation*}
\begin{split}
|P_{n}(x)|^{2} &= \left(g+\sqrt{n}\widehat{\xi_{1}}(z-z_{0})+  \sum\limits_{k=2}^{n} \widehat{\xi_{n}}\sqrt{{\binom n k}} (z-z_{0})^{k}\right)
 \times \\&\times\left(\overline{g}+\sqrt{n}\overline{\widehat{\xi_{1}}}\overline{(z-z_{0})}+  \sum\limits_{k=2}^{n} \sqrt{{\binom n  k}} \cdot \overline{\widehat{\xi_{n}}} \overline{(z-z_{0})}^{k}\right)
\\&= |g|^{2} + 2\sqrt{n}\cdot  \Re(g\overline{\widehat{\xi_{1}}}\overline{(z-z_{0})})+   n|\widehat{\xi_{1}}|^{2}|(z-z_{0})|^{2}\\&+
\sum\limits_{k,m\ge 2} \sqrt{{\binom n k}} \sqrt{\binom  n m} \widehat{\xi_{k}}\overline{\widehat{\xi_{m}}} (z-z_{0})^{k}
\overline{(z-z_{0})}^{m},
\end{split}
\end{equation*}
\begin{equation*}
|F_{n}(x)|^{2} = \frac{|P_{n}(x)|^{2}}{r_{P_{n}}(x,x)} = \frac{|P_{n}(x)|^{2}}{(1+|z|^{2})^{n}}
\end{equation*}
so that, bearing in mind that
\begin{equation*}
z^{k}\overline{z}^{m} = \frac{1}{\tan(\theta/2)^{k+m}}\eee^{i(k-m)\phi},
\end{equation*}
so that
\begin{equation*}
\partial_{\theta}z^{k}\overline{z}^{m}|_{x_{0}} = -\frac{1}{2}(k+m)\frac{1}{\tan(\theta/2)^{k+m+1}\cos(\theta/2)^{2}}\eee^{i(k-m)\phi} \rightarrow 0
\end{equation*}
at $x_{0}\rightarrow S$, provided that $k+m \ge 2$,
and
\begin{equation*}
\frac{1}{\sin{\theta}}\partial_{\phi}z^{k}\overline{z}^{m} =
i(k-m)\frac{1}{\sin(\theta)\tan(\theta/2)^{k+m}} \eee^{i(k-m)\phi} \rightarrow 0,
\end{equation*}
at $x_{0}\rightarrow S$, provided that $k+m \ge 2$, since around $\theta=\pi$,
\begin{equation*}
\sin(\theta)\tan(\theta/2)^{k+m} \sim \frac{\sin(\theta)}{\cos(\theta/2)^{k+m}}\sim \frac{\theta-\pi}{(\frac{\theta}{2}-\pi/2)^{k+m}} \rightarrow \infty.
\end{equation*}
Therefore, we have as $x_{0}\approx S$, conditional on $P_{n}(x_{0})$,
\begin{equation*}
\nabla |P_{n}(x)|^{2}| \rightarrow 2\sqrt{n}\nabla \Re(g\overline{\xi_{1}}\overline{z}) =
2\sqrt{n}\nabla \Re(\overline{g}\xi_{1}z) = 2\sqrt{n} \left(\Re(\overline{g}\xi_{1})\nabla\Re(z)-\Im(\overline{g}\xi_{1})\nabla\Im(z) \right).
\end{equation*}
Then
\begin{equation*}
\nabla \Re(z) =
\left(\partial_{\theta}|_{\theta=\pi}, \frac{1}{\theta}\partial_{\phi}|_{\theta=\pi}\right) \left(\frac{1}{\tan(\theta/2)}\cos(\phi)\right)
= -\frac{1}{1-\cos(\theta)} \left( \cos(\phi),\sin(\phi)\right),
\end{equation*}
and
\begin{equation*}
\nabla \Im(z) =
\left(\partial_{\theta}|_{\theta=\pi}, \frac{1}{\theta}\partial_{\phi}|_{\theta=\pi}\right) \left(\frac{1}{\tan(\theta/2)}\sin(\phi)\right)
= -\frac{1}{1-\cos(\theta)} \left( \sin(\phi),-\cos(\phi)\right),
\end{equation*}
so, upon substituting, we have that, as $x_{0}\rightarrow S$,
\begin{equation*}
\begin{split}
\nabla |P_{n}(x)|^{2}| &\sim -\frac{2\sqrt{n}}{1-\cos{\theta}}\left((a\Re\xi_{1}+b\Im\xi_{1})(\cos{\phi},\sin{\phi})
-(a\Im\xi_{1}-b\Re\xi_{1}) (\sin{\phi},-\cos{\phi})\right)
\\= -\frac{2\sqrt{n}}{1-\cos{\theta}} \big(& \Re\xi_{1}(a\cos{\phi}+b\sin{\phi})+\Im\xi_{1}(b\cos{\phi}-a\sin{\phi}),\\&
\Re\xi_{1}(a\sin{\phi}-b\cos{\phi}) +\Im\xi_{1}(b\sin{\phi}+a\cos{\phi})  \big)
\\&\sim \frac{2\sqrt{n}}{1-\cos{\theta}} N\left(0, \frac{1}{2}(a^{2}+b^{2})I_{2}\right)
\end{split}
\end{equation*}
independent of $\phi$, and, by \eqref{eq:Fn norm def} and \eqref{eq:rPn(Z0,Z0)->1}, so is $|F_{n}(x)|^{2}$.
Therefore, by the continuity w.r.t.\ $x_{0}$ of the relevant random variables, the conditional distribution
of $\nabla F_{n}(S)$ on $F_{n}(S)=g$ w.r.t. an arbitrary orthonormal basis of $T_{S}(\bS^{2})$ is
that of two independent Gaussians of variance $\frac{n}{2}(a^{2}+b^{2})$.
We finally obtain \eqref{eq:neg mom 1st mix der bnd scal} by scaling the random variables $\nabla H_{n}$ by $\sqrt{n}$, and
repeating the same computation as for the limit random field on $\C$.

\vspace{2mm}

Concerning \eqref{eq:pos mom 2nd der bnd scal}, we note that, by the definition \eqref{eq:Hn norm def} of $H_{n}$,
\eqref{eq:pos mom 2nd der bnd scal}
would follow once we establish a bound for the moments of the derivatives of $\Re F_{n}$ and $\Im F_{n}$. As
the second order derivatives $V_{1}V_{2} F_{n}(S)$ are Gaussian,
to this end it is sufficient to bound from above their variances, namely prove that for all $V_{1},V_{2}\in T_{S}(\bS^{2})$
\begin{equation*}
\Var(V_{1}V_{2} F_{n}(S)) \ll n^{2}.
\end{equation*}
To this end we follow the same strategy as above, i.e.\ expand $p_{n}$ around $z=0$ as in \eqref{eq:pn def},
write $$p_{n}(z)=\Re p_{n}(z)+\Im p_{n}(z).$$ Now we differentiate \eqref{eq:pn def}, square it, and substitute $z=0$, so that only
the terms with $k=1,2$ contribute to the derivatives, provided that we check that the contribution of $z^{k}$ with
$k\ge 3$ vanish at the origin, and all the terms corresponding to $k=1,2$ are smooth at the origin.
\end{proof}

\bibliographystyle{plainnat}
\bibliography{lerario_lundberg_bib}

\end{document}